\documentclass[12pt]{amsart}
\usepackage{amsmath, amssymb, graphicx}
\usepackage{bm}
\usepackage[margin=1in]{geometry}
\usepackage{nicefrac,mathrsfs}
\usepackage{float, mathtools}
\usepackage{hyperref, color, enumitem}

\newtheorem{theorem}{Theorem}[section]
\newtheorem{letteredtheorem}{Theorem}

\newtheorem{lemma}[theorem]{Lemma}
\newtheorem{corollary}[theorem]{Corollary}
\newtheorem{proposition}[theorem]{Proposition}

\newtheorem{conjecture}[theorem]{Conjecture}

\theoremstyle{definition}
\newtheorem{definition}[theorem]{Definition}
\newtheorem{example}[theorem]{Example}
\newtheorem{remark}[theorem]{Remark}

\newcommand{\newword}[1]{\emph{#1}}
\numberwithin{equation}{section}

\newtheoremstyle{named}{}{}{\itshape}{}{\bfseries}{.}{.5em}{#1 \thmnote{#3}}
\theoremstyle{named}
\newtheorem*{namedtheorem}{Theorem}

\newcommand{\R}{\mathbb{R}}

\def\F{\mathcal{F}}
\def\LD{\mathop{\mathrm{LD}}}
\def\RD{\mathop{\mathrm{RD}}}
\def\indeg{\mathop{\mathrm{indeg}}}
\def\indegG{\mathop{\mathrm{indeg}_G}}
\def\indegL{\mathop{\mathrm{indeg}_L}}
\def\indegHi{\mathop{\mathrm{indeg}_{H_i}}}
\def\indegKj{\mathop{\mathrm{indeg}_{K_j}}}
\def\outdeg{\mathop{\mathrm{outdeg}}}
\def\indegK{\mathop{\indeg_{K_{n+1}}}}
\def\outdegG{\mathop{\mathrm{outdeg_G}}}
\def\outdegF{\mathop{\mathrm{outdeg_F}}}

\def\aff{\mathop{\mathrm{aff}}}
\def\conv{\mathop{\mathrm{Conv}}}
\def\Tri{\mathop{\mathrm{Tri}}}
\def\Arr{\mathop{\mathrm{Arr}}}
\def\Sol{\mathop{\mathrm{Sol}}}
\def\inseq{\mathop{\mathrm{InSeq}}}
\def\gr{\mathop{\mathrm{Gr}}}
\def\poly{\mathop{\mathrm{Poly}}}

\def\agr{\mathop{\mathrm{Gr}^{\mathrm{aug}}}}
\def\Gaug{\mathop{G^{\mathrm{aug}}}}
\def\SolG{\mathop{\mathrm{Sol}_G}}
\def\SolGm{\mathop{\mathrm{Sol}_{G_m}}}

\makeatletter
\def\Ddots{\mathinner{\mkern1mu\raise\p@
		\vbox{\kern7\p@\hbox{.}}\mkern2mu
		\raise4\p@\hbox{.}\mkern2mu\raise7\p@\hbox{.}\mkern1mu}}
\makeatother
\font\co=lcircle10

\def\jr{\smash{\raise2pt\hbox{\co \rlap{\rlap{\char'005} \char'007}}
		\raise6pt\hbox{\rlap{\vrule height6.5pt}}
		\raise2pt\hbox{\rlap{\hskip4pt \vrule height0.4pt depth0pt
				width7.7pt}}}}
\def\je{\smash{\raise2pt\hbox{\co \rlap{\rlap{\char'005}
				\phantom{\char'007}}}\raise6pt\hbox{\rlap{\vrule height6pt}}}}
\def\+{\smash{\lower2pt\hbox{\rlap{\vrule height14pt}}
		\raise2pt\hbox{\rlap{\hskip-3pt \vrule height.4pt depth0pt
				width14.7pt}}}}

\def\textcross{\ \smash{\lower4pt\hbox{\rlap{\hskip4.15pt\vrule height14pt}}
		\raise2.8pt\hbox{\rlap{\hskip-3pt \vrule height.4pt depth0pt
				width14.7pt}}}\hskip12.7pt}
\def\textelbow{\ \hskip.1pt\smash{\raise2.8pt%
		\hbox{\co \hskip 4.15pt\rlap{\rlap{\char'005} \char'007}
			\lower6.8pt\rlap{\vrule height3.5pt}
			\raise3.6pt\rlap{\vrule height3.5pt}}
		\raise2.8pt\hbox{%
			\rlap{\hskip-7.15pt \vrule height.4pt depth0pt width3.5pt}%
			\rlap{\hskip4.05pt \vrule height.4pt depth0pt width3.5pt}}}
	\hskip8.7pt}

\title[Generalized permutahedra to Grothendieck polynomials via flow polytopes]{From generalized permutahedra to Grothendieck polynomials via flow polytopes}

\author{Karola M\'esz\'aros}
\address{Karola M\'esz\'aros, Department of Mathematics, Cornell University, Ithaca, NY 14853 and School of Mathematics, Institute for Advanced Study, Princeton, NJ 08540.  \newline\textup{karola@math.cornell.edu}
}

\author{Avery St.~Dizier}
\address{Avery St.~Dizier, Department of Mathematics, Cornell University, Ithaca NY 14853.  \newline{ajs624@cornell.edu}
}

\thanks{M\'esz\'aros was partially supported by National Science Foundation Grants (DMS 1501059 and DMS 1847284), as well as by a von Neumann Fellowship at the IAS funded by the Fund for Mathematics and Friends of the Institute for Advanced Study.}

\begin{document}
	
	\begin{abstract}
		We study a family of dissections of flow polytopes arising from the subdivision algebra. To each dissection of a  flow polytope, we associate a polynomial, called the left-degree polynomial, which we show is invariant of the dissection considered (proven independently by Grinberg). We prove that left-degree polynomials encode integer points of generalized permutahedra. Using that certain left-degree polynomials are related to Grothendieck polynomials, we resolve special cases of conjectures by Monical, Tokcan, and Yong regarding the saturated Newton polytope property of Grothendieck polynomials.
	\end{abstract}
	 
	\maketitle
	
	\section{Introduction}
	\label{sec1}	
	
	The flow polytope $\F_G$  associated to a directed acyclic graph $G$ is the set of all flows $f:E(G) \rightarrow \mathbb{R}_{\geq 0}$ of size one. Flow polytopes are   fundamental objects in combinatorial optimization \cite{schrijver}, and  in the past decade they were also uncovered in representation theory \cite{bv, mm}, the study of the space of diagonal harmonics \cite{lmm, tesler}, and the study of Schubert and Grothendieck polynomials \cite{toric,pipe1}.  A natural way to analyze a
	convex polytope is to dissect it into simplices. The relations of the subdivision algebra, developed in a series of papers \cite{root1, root2, prod}, encode dissections of a family of flow (and root) polytopes (see  Section \ref{sec2} for details).
	
	Take any graph $G$ with special source and sink vertices and fix a dissection $\mathcal{R}$ (into simplices) produced by the subdivision algebra. We study an invariant of $\mathcal{R}$ called the left-degree polynomial. Left-degree polynomials were introduced in \cite{pipe1} by Escobar and M\'esz\'aros. They showed that for a family of trees, the left-degree polynomial does not depend on the particular dissection considered. In Theorem \ref{theoremA}, we extend this result to any (not necessarily simple) graph. This was independently proven by Grinberg in \cite{grinberg} using algebraic techniques.
	
	Our main technique is to connect left-degree polynomials to flow polytopes. We study the left-degree polynomial of a particular recursive dissection from \cite{prod}. In Corollary 3.16, we partition the support of this left-degree polynomial (with multiplicity) into blocks and show that the convex hull of each block is integrally equivalent to a flow polytope. Using this flow perspective, we give an inductive proof of Theorem \ref{theoremA}.
	
	Using the flow approach again, we connect the Newton polytopes of left-degree polynomials to generalized permutahedra. In Theorem \ref{theoremB}, we show that the Newton polytope of every homogeneous component of a left-degree polynomial is a generalized permutahedron. We also prove the \emph{saturated Newton polytope} property (SNP) of Monical, Tokcan, and Yong \cite{MTY}: every integer point in the Newton polytope is in the support of the polynomial.
	
	We apply these results to Schubert and Grothendieck polynomials. Escobar and M\'esz\'aros showed in \cite[Theorem 5.3]{pipe1} that a certain family of Grothendieck polynomials are related to left-degree polynomials. We conclude in Theorem \ref{theoremC} that this family of Grothendieck polynomials have SNP, and that the Newton polytopes of their homogeneous components are generalized permutahedra. We conjecture this holds for all Grothendieck polynomials (Conjecture \ref{conj:groth}).
	
	The outline of this paper is as follows. Section \ref{sec2} covers the necessary background. In Section \ref{sec3}, we study the support of left-degree polynomials (left-degree sequences) directly, and make the connection to flow polytopes. To maximize ease of reading, we restrict to the case of simple graphs. In Section \ref{sec4} we introduce left-degree polynomials and describe their Newton polytopes. We apply this description to a family of Grothendieck polynomials in Section \ref{sec5}. In Section \ref{sec6}, we describe the technical modifications required to drop the simple graph assumption in the previous sections. We combinatorially prove left-degree polynomials are an invariant of the underlying graph.
	
	\section{Background information}
	\label{sec2}
	In this section, we summarize  definitions, notations, and results that we  use later. Throughout this paper, by \newword{graph}, we mean a directed acyclic graph where multiple edges are allowed (as described below). Although we sometimes refer to edges by their endpoints, we allow that $G$ may have multiple edges. We also adopt the convention of viewing each element of a multiset as being distinct, so that we may speak of subsets, though we will use the word submultiset interchangeably to highlight the multiplicity. Due to this convention, all unions in this paper are assumed to be disjoint multiset unions. For any integers $m$ and $n$, we will frequently use the notation $[m,n]$ to refer to the set $\{m,m+1,\ldots, n\}$ and $[n]$ to refer to the set $[1,n]$.
	
	\subsection{Flow Polytopes} Let $G$ be a graph on vertex set $[0,n]$ with edges directed from smaller to larger vertices. For each edge $e$, let $\mathrm{in}(e)$ denote the smaller (initial) vertex of $e$, and $\mathrm{fin}(e)$ the larger (final) vertex of $e$. Imagine fluid moving along the edges of $G$. At vertex $i$ let there be an external inflow of fluid $a_i$ (outflow of $-a_i$ if $a_i<0$), and call the vector $\bm{a}=(a_0,\ldots,a_n)\in\mathbb{R}^{n+1}$ the \newword{netflow vector}. Formally, a \newword{flow} on $G$ with netflow vector $\bm{a}$ is an assignment $f:E(G)\to \R_{\geq 0}$ of nonnegative values to each edge such that fluid is conserved at each vertex. That is, for each vertex $i$
	\[ \sum_{\mathrm{in}(e)=i}{f(e)} - \sum_{\mathrm{fin}(e)=i}{f(e)} = a_i.\]

	The \newword{flow polytope} $\mathcal{F}_G(\bm{a})$ is the collection of all flows on $G$ with netflow vector $\bm{a}$. Alternatively, let $M_G$ denote the incidence matrix of $G$. That is, let the columns of $M_G$ be the vectors $e_i-e_j$ for $(i,j)\in E(G)$, $i<j$,  where $e_i$ is the $(i+1)$-th standard basis vector in $\R^{n+1}$. Then,
	\begin{align}
		\label{def:flowpolytope}
		\mathcal{F}_G(\bm{a})= \{f\in \R^E_{\geq 0}\mid  M_G f=\bm{a} \}.
	\end{align}
	From this perspective, note that the number of integer points in $\mathcal{F}_G(\bm{a})$ is exactly the number of ways to write $\bm{a}$ as a nonnegative integral combination of the vectors $e_i-e_j$ for edges $(i,j)$ in $G$, $i<j$. This number is known as the \newword{Kostant partition function} $K_G(\bm{a})$. For brevity, we write $\F_G$ to mean $\F_G(1,0,\ldots , 0,-1)$, and we refer to $\F_G$ as the flow polytope of $G$, since in this paper our primary focus is on studying these particular flow polytopes.
		
	The following milestone result on volumes of flow polytopes was shown by Postnikov and Stanley in unpublished work.
	
	\begin{theorem}[Postnikov-Stanley]
		\label{pstheorem}
		Let $G$ be a directed acyclic connected graph on vertex set $[0,n]$. Set $d_i=\indeg_G(i)-1$ for each vertex $i$, where $\indeg_G(i)$ is the number of edges incoming to vertex $i$ in $G$. The normalized volume of the flow polytope of $G$ is given by
		\[ \mathrm{Vol} \,\,\mathcal{F}_{G} = K_{G} \left (0,d_1, \ldots , d_n, \, -\sum_{i=1}^{n}{d_i} \right ). \]
	\end{theorem}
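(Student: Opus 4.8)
The plan is to turn the normalized volume into the top-order asymptotics of an Ehrhart count, recognize that count as a Kostant partition function, and isolate a single surviving evaluation by specializing to the netflow $(1,0,\dots,0,-1)$.

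First I would fix the dimension: since $G$ is connected its (totally unimodular) incidence matrix $M_G$ has rank $n$, so $\mathcal{F}_G$ is a lattice polytope of dimension $d:=|E(G)|-n$, and its normalized volume is $d!$ times the Euclidean volume taken inside its affine span. That Euclidean volume is the leading coefficient of the Ehrhart polynomial $m\mapsto\#\bigl(m\mathcal{F}_G\cap\mathbb{Z}^{E(G)}\bigr)$, which by the lattice-point description of flow polytopes equals $K_G(m,0,\dots,0,-m)$. Hence
\[
\mathrm{Vol}\,\mathcal{F}_G=\lim_{m\to\infty}\frac{d!}{m^{d}}\,K_G(m,0,\dots,0,-m),
\]
and it remains to extract the leading behavior of $K_G$ along the ray $(m,0,\dots,0,-m)$.

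Next I would invoke (or reprove via a flow-splitting argument) a Lidskii-type expansion in the sense of Baldoni--Vergne: decomposing a generic large lattice flow of netflow $(m,0,\dots,0,-m)$ as a scaled nonnegative combination of elementary flows plus a bounded correction, and tracking how the free flow --- all of which enters at the source --- is apportioned among the edges into each internal vertex, one obtains an expansion of $K_G(m,0,\dots,0,-m)$ whose dominant term as $m\to\infty$ is a single multinomial coefficient times $K_G$ at a netflow vector built from the degrees of $G$. At the netflow $(1,0,\dots,0,-1)$ all but one term vanish, precisely because the source is the only vertex with positive netflow, and the survivor records that at each internal vertex $i$ the free flow spreads over its $\indeg_G(i)$ incoming edges with one slot committed --- a shift by $d_i=\indeg_G(i)-1$. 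Reversing every edge of $G$ is an affine lattice isomorphism of flow polytopes that preserves the volume while interchanging $\indeg$ and $\outdeg$, which converts this evaluation into the $\indeg$-form in the statement. (Alternatively, and nearer to the methods of this paper, one can dissect $\mathcal{F}_G$ by iterating the subdivision-algebra reductions of Section~\ref{sec2} until no graph has a path of length two; the top-dimensional pieces of the dissection are then unit simplices, so $\mathrm{Vol}\,\mathcal{F}_G$ enumerates them, and one matches that enumeration, with multiplicity, to the lattice points counted by $K_G(0,d_1,\dots,d_n,-\sum d_i)$.)

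The hard part will be exactly this combinatorial identification: showing that a single term survives the specialization and that its value is the asserted Kostant partition function --- equivalently, in the dissection picture, determining which reduced graphs occur and with what multiplicities, and verifying that the bookkeeping is governed by the in-degree vector $(d_1,\dots,d_n)$. The remaining ingredients --- the dimension count, the reduction to Ehrhart asymptotics, and the edge-reversal symmetry --- are routine.
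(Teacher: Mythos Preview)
The paper does not prove Theorem~\ref{pstheorem}; it is quoted in Section~\ref{sec2} as a background result due to Postnikov and Stanley (unpublished), with the generalization to arbitrary netflow attributed to Baldoni--Vergne~\cite{bv}. So there is no proof in the paper to compare against directly. What the paper \emph{does} prove is the special case recorded as Theorem~\ref{pstheoremaltproof}, valid for graphs of the form $\widetilde{G}$, and the argument there is exactly your parenthetical alternative: dissect $\mathcal{F}_{\widetilde{G}}$ via the reduction tree $\mathcal{T}(G)$, observe that the full-dimensional leaves correspond to unit simplices so the normalized volume is their number, and invoke Corollary~\ref{flows} to identify that count with a Kostant partition function evaluation.

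Your primary route --- Ehrhart asymptotics feeding into a Lidskii-type expansion --- is a correct outline of the Baldoni--Vergne argument, and you have correctly located the content: establishing that a single term survives the specialization and identifying it with the asserted $K_G$ value \emph{is} the theorem. As you yourself flag, what you have written is an accurate table of contents for a proof rather than a proof. The dissection alternative you sketch is closer to an actual argument and matches what the paper carries out for $\widetilde{G}$, but note that Proposition~\ref{subdivisionlemma} as stated applies to $\mathcal{F}_{\widetilde{G_0}}$ rather than to an arbitrary $\mathcal{F}_G$, so promoting that sketch to a proof of the general statement would still require work not carried out here.
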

	
	Baldoni and Vergne \cite{bv} generalized this result for  flow polytopes with arbitrary netflow vectors. Theorem \ref{pstheorem}  beautifully connects the volume of the flow polytope of any graph to an evaluation of the Kostant partition function. We note that since the number of integer points of a flow polytope is already given by a Kostant partition function evaluation, the volume of any flow polytope is given by the number of integer points of another.

	Recall that two polytopes $P_1\subseteq \R^{k_1}$ and ${P_2\subseteq \mathbb{R}^{k_2}}$ are \newword{integrally equivalent} if there is an affine transformation $T:\R^{k_1}\to \R^{k_2}$ that is a bijection $P_1\to P_2$ and a bijection $\aff(P_1)\cap \mathbb{Z}^{k_1}\to \aff(P_2)\cap \mathbb{Z}^{k_2}$. Integrally equivalent polytopes have the same face lattice, volume, and Ehrhart polynomial.
	
	 Given a graph $G$ and a set $S$ of its edges, we use the notation $G/S$ to denote the graph obtained from $G$ by contracting the edges in $S$ (and deleting loops). We use the notation $G\backslash S$ to denote the graph obtained from $G$ by deleting the edges in $S$. For a set $V$ of vertices of $G$, we also use the notation $G\backslash V$ to denote the graph obtained from $G$ by deleting the vertices in $V$ together with all edges incident to them. When $S$ or $V$ consists of just one element, we simply write $G/e$ or $G\backslash v$.
	 
	 While simple to prove, the following lemma is important. We leave its proof to the reader.

	\begin{lemma}
		\label{contraction}
		Let $G$ be a graph on $[0,n]$. Assume vertex $j$ has only one outgoing edge $e$ and netflow $a_j\geq0$. If $e$ is directed from $j$ to $k$, then 
		\[\mathcal{F}_G(a_0,\dots, a_n) \mbox{ and } \mathcal{F}_{G/e}(a_0,\ldots, a_{j-1}, a_{j+1},a_{j+2},\ldots, a_{k-1}, a_k+a_j,a_{k+1},\ldots, a_n)\] are integrally equivalent. An analogous result holds if $j$ has only one incoming edge and $a_j\leq 0$.
	\end{lemma}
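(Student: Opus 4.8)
The plan is to reduce the statement to the observation that contracting the unique outgoing edge $e$ at vertex $j$ eliminates both the variable $f(e)$ and the conservation equation at $j$, and that this elimination is realized by an affine isomorphism that behaves well on the integer lattice. Concretely, I would first write out the defining linear system of $\mathcal{F}_G(\bm{a})$: a flow is a point $f\in\R_{\geq0}^{E(G)}$ satisfying $M_G f=\bm{a}$, i.e. one conservation equation per vertex. Since $e$ is the only edge leaving $j$, the conservation equation at $j$ reads $f(e)=a_j+\sum_{\mathrm{fin}(e')=j}f(e')$, which expresses $f(e)$ as an affine function of the remaining edge-flows. The hypothesis $a_j\geq0$ together with nonnegativity of the incoming flows guarantees $f(e)\geq 0$ automatically, so dropping the coordinate $f(e)$ and the variable constraint $f(e)\geq0$ loses no information.

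Next I would define the candidate affine map $T:\R^{E(G)}\to\R^{E(G/e)}$ by simply forgetting the coordinate $f(e)$ (the edge sets $E(G)\setminus\{e\}$ and $E(G/e)$ are identified, since contracting $e$ merges $j$ and $k$ but keeps all other edges, with former endpoints $j$ now reading $k$). I would check that $T$ maps $\mathcal{F}_G(\bm{a})$ onto $\mathcal{F}_{G/e}(\bm{a}')$, where $\bm{a}'$ is $\bm{a}$ with the $j$-th entry removed and $a_j$ added to the $k$-th entry: summing the conservation equations at $j$ and $k$ in $G$ cancels $f(e)$ and yields exactly the conservation equation at the merged vertex in $G/e$ with netflow $a_k+a_j$, while all other conservation equations are untouched. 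Surjectivity follows by running the displayed formula for $f(e)$ backwards to reconstruct the missing coordinate. Injectivity is clear since the reconstruction is unique.

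Finally, for integral equivalence I would verify that $T$ restricts to a bijection $\aff(\mathcal{F}_G(\bm{a}))\cap\mathbb{Z}^{E(G)}\to\aff(\mathcal{F}_{G/e}(\bm{a}'))\cap\mathbb{Z}^{E(G/e)}$: forgetting a coordinate clearly sends integer points to integer points, and the inverse sends an integer point back to an integer point because $f(e)=a_j+\sum_{\mathrm{fin}(e')=j}f(e')$ is an integer combination of integers (here it matters that the netflow vectors are integral, which they are in all our applications, or more precisely that $\aff$ is taken with respect to the relevant integer translate). Since $T$ is an affine bijection inducing a lattice bijection, $\mathcal{F}_G(\bm{a})\equiv\mathcal{F}_{G/e}(\bm{a}')$. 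The dual statement, with a unique incoming edge at $j$ and $a_j\leq0$, follows by reversing all edge orientations and negating netflows, which turns the "incoming" hypothesis into the "outgoing" one already handled. I do not anticipate a genuine obstacle here; the only point requiring care is bookkeeping the vertex relabeling after contraction so that the shifted netflow vector is indexed correctly, which is exactly what the displayed formula in the statement records.
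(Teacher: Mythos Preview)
Your proposal is correct and is precisely the standard argument one would write down; the paper itself does not give a proof of this lemma, stating only that it is ``simple to prove'' and leaving it to the reader, so there is nothing to compare against beyond noting that your elimination-of-the-forced-coordinate approach is the intended one.
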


	\subsection{Dissections of Flow Polytopes}
	For graphs with a special source and sink, there is a systematic way to dissect the flow polytope $\F_{\widetilde{G}}$ studied in \cite{prod}. Let $G$ be a graph on $[0,n]$, and define $\widetilde{G}$ on $[0,n]\cup \{s,t\}$ with $s$ being the smallest vertex and $t$ the biggest vertex by setting $E(\widetilde{G})= E(G)\cup \{(s,i),(i,t)\mid i\in[0,n] \}$. Although we defined the flow polytope $\mathcal{F}_G(\bm{a})$ above only when $G$ was a graph on $[0,n]$, the definition (\ref{def:flowpolytope}) makes sense with any totally ordered vertex set. For graphs $\widetilde{G}$, we take the ordering $s<0<1<\cdots<n<t$. The systematic dissections of $\mathcal{F}_{\widetilde{G}}$ can be expressed either in the language of the subdivision algebra or in terms of reduction trees \cite{root1, root2, prod}. We use the language of reduction trees.
	
	Let $G_0$ be a graph on $[0,n]$ with edges $(i,j)$ and $(j,k)$ for some $i<j<k$. By a \newword{reduction} on $G_0$, we   mean the construction of three new graphs $G_1$, $G_2$ and $G_3$ on $[0,n]$ given by
	\begin{align}
		E(G_1)&=E(G_0)\backslash \{(j,k)\}\cup \{(i,k)\}\nonumber \\
		E(G_2)&=E(G_0)\backslash \{(i,j)\}\cup \{(i,k)\} \label{reducing} \\
		E(G_3)&=E(G_0)\backslash \{(i,j),(j,k)\}\cup \{(i,k)\} \nonumber
	\end{align}
	See Figure \ref{subdivlemmaproofpic} for an example reduction. We say $G_0$ \newword{reduces} to $G_1$, $G_2$ and $G_3$. We also say that the above reduction is at vertex $j$, on the edges $(i,j)$ and $(j,k)$. The following proposition explains how the process of taking reductions dissects the flow polytope $\mathcal{F}_{G_0}$ into other flow polytopes.
	
	\begin{proposition}
		\label{subdivisionlemma} 
		Let $G_0$ be a graph on $[0,n]$ which reduces to $G_1$, $G_2$ and $G_3$ as above. Then for each $m\in[3]$, there is a polytope $Q_m$ integrally equivalent to $\mathcal{F}_{\widetilde{G}_m}$ such that $Q_1$ and $Q_2$ subdivide $\mathcal{F}_{\widetilde{G}_0}$ and intersect in $Q_3$. That is, the polytopes $Q_1$, $Q_2$, and $Q_3$ satisfy
		\[\mathcal{F}_{\widetilde{G}_0} = Q_1 \bigcup  Q_2 \mbox{ with }  Q_1^o\bigcap  Q_2^o= \emptyset \mbox{ and }  Q_1\bigcap  Q_2= Q_3.\]
		Moreover, $Q_1$ and $Q_2$ have the same dimension as $\mathcal{F}_{\widetilde{G}_0}$, and $Q_3$ has dimension one less.
	\end{proposition}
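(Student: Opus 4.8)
The plan is to produce all three polytopes directly inside $\F_{\widetilde{G}_0}$, as the slices obtained by comparing the two flow values on the edges involved in the reduction. For a flow $f$ on $\widetilde{G}_0$ abbreviate $p=f(i,j)$ and $q=f(j,k)$, and set
\[
Q_1=\F_{\widetilde{G}_0}\cap\{p\ge q\},\qquad
Q_2=\F_{\widetilde{G}_0}\cap\{p\le q\},\qquad
Q_3=\F_{\widetilde{G}_0}\cap\{p=q\}.
\]
Then $\F_{\widetilde{G}_0}=Q_1\cup Q_2$ and $Q_1\cap Q_2=Q_3$ hold by construction, so the content of the proposition is exactly: (a) $Q_m$ is integrally equivalent to $\F_{\widetilde{G}_m}$ for $m=1,2,3$; (b) $Q_1^o\cap Q_2^o=\emptyset$; and (c) the stated dimensions.

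For (a) I would write down the ``rerouting'' map that pushes as much flow as possible off the detour $i\to j\to k$ onto the direct edge $i\to k$. Since $E(\widetilde{G}_1)$ is obtained from $E(\widetilde{G}_0)$ by deleting $(j,k)$ and inserting a new edge $(i,k)$, the assignment
\[
T_1(f)(i,j)=p-q,\qquad T_1(f)(i,k)=q,\qquad T_1(f)(e)=f(e)\ \text{for every other edge }e
\]
defines a map $\R^{E(\widetilde{G}_0)}\to\R^{E(\widetilde{G}_1)}$. A direct check shows $T_1$ sends flows on $\widetilde{G}_0$ to flows on $\widetilde{G}_1$ with the same netflow vector: conservation at every vertex other than $i,j,k$ is untouched; at $i$ the outflow $p$ becomes $(p-q)+q=p$; at $k$ the inflow $q$ along $(j,k)$ is replaced by the inflow $q$ along $(i,k)$; and at $j$ the balance $p+(\text{other in})=q+(\text{other out})$ becomes $(p-q)+(\text{other in})=(\text{other out})$, which is equivalent. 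None of these checks uses nonnegativity, so $T_1$ also identifies the affine hulls. On coordinates $T_1$ is the identity except on the $\{(i,j),(j,k)\}$-block, where it acts by $\bigl(\begin{smallmatrix}1&-1\\0&1\end{smallmatrix}\bigr)$, of determinant $1$, so $T_1$ is an automorphism of the ambient integer lattices. Finally $T_1$ restricts to a bijection $Q_1\to\F_{\widetilde{G}_1}$ because the defining inequalities correspond: the slicing inequality $p\ge q$ of $Q_1$ is precisely nonnegativity of the $(i,j)$-coordinate $p-q$ of $T_1(f)$, while the old inequality $f(i,j)=p\ge 0$ becomes redundant. Hence $Q_1\equiv\F_{\widetilde{G}_1}$, and the maps $T_2$ (with $T_2(f)(j,k)=q-p$, $T_2(f)(i,k)=p$) and $T_3$ (on $Q_3$ one has $p=q$; delete both $(i,j)$ and $(j,k)$ and give the new edge $(i,k)$ the value $p$) are handled in the same way and yield $Q_2\equiv\F_{\widetilde{G}_2}$ and $Q_3\equiv\F_{\widetilde{G}_3}$.

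For (b) and (c) I would exhibit two flows in $\F_{\widetilde{G}_0}$ lying strictly on opposite sides of the hyperplane $H=\{p=q\}$. Since $\widetilde{G}_0$ contains every edge $(s,v)$ and $(v,t)$, routing the single unit of flow along $s\to i\to j\to t$ produces a flow with $p=1>0=q$, and routing it along $s\to j\to k\to t$ produces one with $q=1>0=p$. Thus $g:=f(i,j)-f(j,k)$ is a linear functional that is non-constant on $\F_{\widetilde{G}_0}$ and attains both signs, so $H$ meets the relative interior of $\F_{\widetilde{G}_0}$. Consequently $Q_1$ and $Q_2$ are full-dimensional in $\aff(\F_{\widetilde{G}_0})$, with $\operatorname{relint}Q_1\subseteq\{g>0\}$ and $\operatorname{relint}Q_2\subseteq\{g<0\}$, which gives $Q_1^o\cap Q_2^o=\emptyset$, $\dim Q_1=\dim Q_2=\dim\F_{\widetilde{G}_0}$, and $\dim Q_3=\dim(\F_{\widetilde{G}_0}\cap H)=\dim\F_{\widetilde{G}_0}-1$.

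The only genuine work is the bookkeeping in step (a): checking that each rerouting map is not merely injective into the reduced polytope but surjective onto it, i.e.\ that the single extra inequality cutting out $Q_m$ ($p\ge q$, $p\le q$, or $p=q$) matches precisely the nonnegativity of the freshly created coordinate of $\F_{\widetilde{G}_m}$, with every remaining facet-defining inequality lining up one-to-one. This is also the step that makes the trichotomy $p>q$, $p<q$, $p=q$ --- hence the appearance of exactly the three graphs $\widetilde{G}_1,\widetilde{G}_2,\widetilde{G}_3$ --- transparent. I would remark that this is the flow-polytope incarnation of the reduction relation of the subdivision algebra, and point to \cite{root1, prod} for the algebraic provenance of these identities.
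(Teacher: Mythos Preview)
Your proof is correct and follows essentially the same approach as the paper: you slice $\mathcal{F}_{\widetilde{G}_0}$ by the hyperplane $f(i,j)=f(j,k)$ to define $Q_1,Q_2,Q_3$, exhibit the unimodular ``rerouting'' maps to obtain the integral equivalences with $\mathcal{F}_{\widetilde{G}_m}$, and argue that the hyperplane meets the relative interior to deduce the dimension claims. The paper's write-up is terser---it leaves the integral equivalences to a figure and the interior argument to the remark that the reduction edges are not incident to $s$ or $t$---whereas you spell out the maps $T_m$ and the two witness flows $s\to i\to j\to t$ and $s\to j\to k\to t$ explicitly, but the content is the same.
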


	\begin{proof}
		Let $r_1$ and $r_2$ denote the edges of $G_0$ from $i$ to $j$ and from $j$ to $k$ respectively that were used in the reduction. Viewing $\mathbb{R}^{\#E(\widetilde{G}_0)}$ as functions $f:E(\widetilde{G}_0)\to \mathbb{R}$, cut $\mathcal{F}_{\widetilde{G}_0}$ with the hyperplane $H$  defined by the equation $f(r_1)=f(r_2)$. Let $Q_1$ be the intersection of $\mathcal{F}_{\widetilde{G}_0}$ with the positive half-space $f(r_1)\geq f(r_2)$, let $Q_2$ be the intersection of $\mathcal{F}_{\widetilde{G}_0}$ with the negative half-space $f(r_1)\leq f(r_2)$, and let $Q_3$ be the intersection of $\mathcal{F}_{\widetilde{G}_0}$ with the hyperplane $H$.
		See  Figure \ref{subdivlemmaproofpic} for an  illustration of  the integral equivalence between $Q_m$ and $\mathcal{F}_{\widetilde{G}_m}$. Notice that since we are doing the reductions on the edges of  $G_0$ (as opposed to on the edges incident to the source or sink in $\widetilde{G}_0$), it follows that the hyperplane $H$ meets $\mathcal{F}_{\widetilde{G}_0}$ in its interior, giving the claims on the dimensions of each $Q_m$.	\end{proof}
		\begin{figure}[ht]
			\includegraphics[]{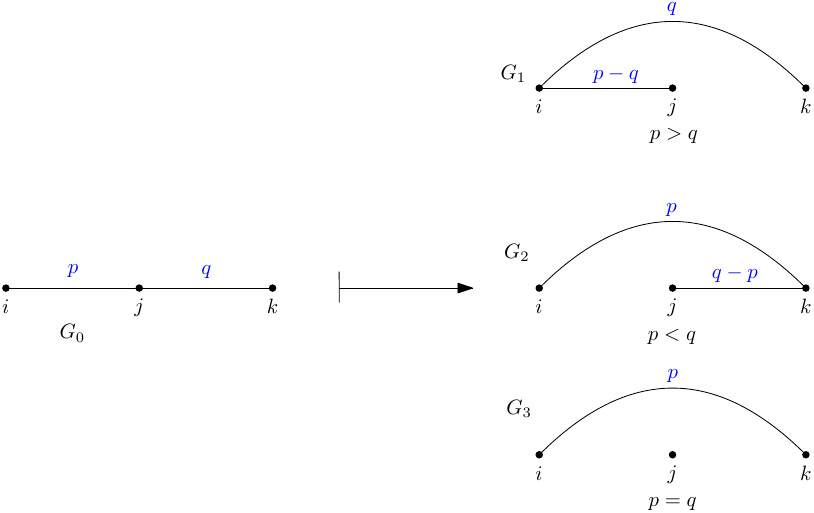}
			\caption{An illustration of the integral equivalence between $Q_m$ and $\mathcal{F}_{\widetilde{G}_m}$ for $m\in[3]$ used Proposition \ref{subdivisionlemma}.}
			\label{subdivlemmaproofpic}
		\end{figure}
	
	Iterating this subdivision process will produce a dissection of $\mathcal{F}_{\widetilde{G}_0}$ into simplices. This process can be encoded using a reduction tree. A \newword{reduction tree} of $G$ is constructed as follows. Let the root node of the tree be labeled by $G$. If a node has any children, then it has three children obtained by performing a reduction on that node and labeling the children with the graphs defined in (\ref{reducing}). Continue this process until the graphs labeling the leaves of the tree cannot be reduced. See Figure \ref{reductiontree} for an example. 
	
	Fix a reduction tree $\mathcal{R}$ of $G$. Let $L$ be a graph labeling one of the leaves in $\mathcal{R}$. Lemma \ref{contraction} implies that $\mathcal{F}_{\widetilde{L}}$ is integrally equivalent to the standard simplex, so the flow polytopes of the graphs labeling the leaves of $\mathcal{R}$ dissect $\mathcal{F}_{\widetilde{G}}$ into unimodular simplices. Consequently, all dissections we consider in this paper will be dissections into unimodular simplices. By \newword{full-dimensional leaves} of $\mathcal{R}$, we mean the leaves $L$ with $\#E(L)=\#E(G)$. By \newword{lower-dimensional leaves} we mean all other leaves $L$ of $\mathcal{R}$. Note that the full-dimensional leaves correspond to top-dimensional simplices in the dissection of $\mathcal{F}_{\widetilde{G}}$, and the lower-dimensional leaves index intersections of the top-dimensional simplices. The dissections produced by a reduction tree are not generally triangulations, due to how leaves on different sides of the reduction tree can intersect.
	
	Recall the \newword{normalized volume} of a polytope is the usual Euclidean volume scaled by the volume of a unimodular simplex in the affine span of the polytope. Since all simplices $\mathcal{F}_{\widetilde{L}}$ of leaves in a reduction tree are unimodular, we have the following result.
	
	\begin{corollary}
		The normalized volume of $\mathcal{F}_{\widetilde{G}}$ equals the number of full-dimensional leaves in any reduction tree of $G$.
	\end{corollary}
	
	\begin{figure}[ht]
		\centering
		\includegraphics[scale=1]{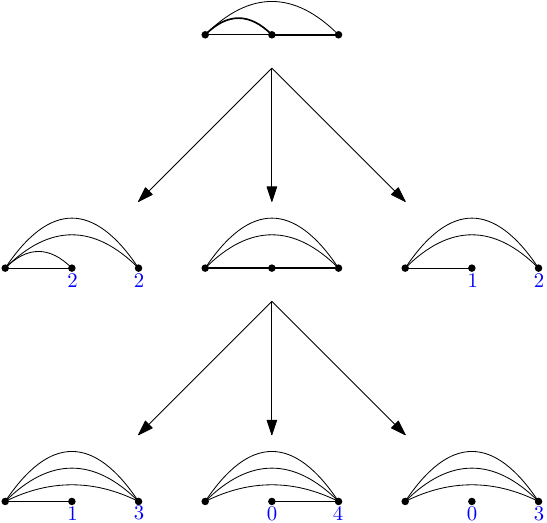}
		\caption{A reduction tree for a graph on three vertices. The edges involved in each reduction are shown in bold. The left-degree sequences of the leaves are displayed below each leaf.}
		\label{reductiontree}
	\end{figure}
	
	\subsection{Left-Degree Sequences}
	
	 Let $G$ be a graph on $[0,n]$, and let $\mathcal{R}$ be a reduction tree of $G$. For each leaf $L$ of $\mathcal{R}$, consider the \newword{left-degree sequence} 
	 \[\left (\indegL(1),\indegL(2),\ldots, \indegL(n) \right ).\] 
	 By \newword{full-dimensional} sequences, we will mean left-degree sequences of full-dimensional leaves of $\mathcal{R}$. Although the actual leaves of a reduction tree are dependent on the individual reductions performed, we prove in Theorem \ref{theoremA} that the left-degree sequences are not.

	\begin{example}
		Any reduction tree of $K_4$ has the full-dimensional left-degree sequences 
		\begin{align*}
			\{(0,0,6),(0,0,6),(0,1,5),(0,1,5),(0,2,4),(0,2,4),(0,3,3),(1,0,5),(1,1,4),(1,2,3) \}
		\end{align*}
	\end{example}

	\section{Triangular arrays and left-degree sequences}
	\label{sec3}
	In this section, we expand the technique described in \cite{prod} that characterized left-degree sequences of full-dimensional leaves in a specific reduction tree of any graph. Given a graph $G$, we construct this reduction tree $\mathcal{T}(G)$. We give a characterization of the left-degree sequences of all leaves of this reduction tree, not just the full-dimensional ones. We then connect this characterization to flow polytopes. The main result of this section is Corollary \ref{flows}, where we provide a partition of the left-degree sequences of $\mathcal{T}(G)$ and biject each block to the set of integer points in a flow polytope.
	\medskip
			
	For simplicity, throughout this section we restrict to the case where $G$ is a simple  graph on the vertex set $[0,n]$. The set $\mathrm{Sol}_{G}(F)$  is defined in Definition \ref{solg} for simple graphs. We address the more technical general case in Section \ref{sec6} and prove Theorem~\ref{theoremA}.

	We begin by generalizing \cite[Lemma 3]{prod} to include the descriptions of the lower dimensional leaves of reductions performed at a special vertex $v$. The proof is a straightforward  generalization of that of  \cite[Lemma 3]{prod}, illustrated in Figure \ref{cornerstonelemmaexample}. 
	The key to the proof is the special reduction order, whereby we always perform a reduction on the longest edges possible that are incident to the vertex at which we are reducing (the length of an edge being the absolute value of the difference of its vertex labels).
	\begin{lemma}
		\label{cornerstonelemma}
		Assume $G$ has a distinguished vertex $v$ with $p$ incoming edges and one outgoing edge $(v,u)$. If  we perform all reductions possible which involve only edges incident to $v$ in the special reduction order, then we obtain graphs $H_i$ for $i\in [p+1]$, and $K_j$ for $j\in[p]$, with 
		\begin{align*}
			(\indegHi(v),\indegHi(u))&=(p+1-i,\,\indegG(u)-1+i),\\
			(\indegKj(v),\indegKj(u))&=(p-j,\,\indegG(u)-1+j).
		\end{align*}
	\end{lemma}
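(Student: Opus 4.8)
The plan is to perform the reductions at $v$ one at a time in the special reduction order and simply read off $\indeg(v)$ and $\indeg(u)$ from the definitions in (\ref{reducing}). The observation that makes everything work is that, since $v$ has a \emph{unique} outgoing edge $(v,u)$, every reduction we are allowed to perform must use $(v,u)$ together with one of the incoming edges of $v$; and of the three graphs produced in (\ref{reducing}) with $j=v$ and $k=u$, both $G_1$ and $G_3$ delete the edge $(v,u)$, so in each of them $v$ has no outgoing edge and cannot be reduced again at $v$, whereas $G_2$ keeps $(v,u)$ but has deleted one incoming edge of $v$, so $\indeg(v)$ has dropped by exactly one. In every case the only new edge created, namely $(\mathrm{in}(e),u)$, is incident to $u$ but not to $v$, so it plays no role in any later reduction at $v$.

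Hence the process is a caterpillar rather than a branching tree. Concretely, I would set $C_1=G$ and, for $m=1,\dots,p$, reduce $C_m$ at $v$ on its longest remaining incoming edge together with $(v,u)$, calling the $G_1$-child $H_m$, the $G_3$-child $K_m$, and letting $C_{m+1}$ be the $G_2$-child. Passing from $C_m$ to $C_{m+1}$ lowers $\indeg(v)$ by one and raises $\indeg(u)$ by one, so $\indeg_{C_m}(v)=p-m+1$ and $\indeg_{C_m}(u)=m$; in particular $\indeg_{C_{p+1}}(v)=0$, so $C_{p+1}$ admits no reduction at $v$ and I would set $H_{p+1}:=C_{p+1}$. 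Now (\ref{reducing}) gives the in-degrees directly: in $H_m$ only $(v,u)$ is removed (and a new edge into $u$ added), so $(\indeg_{H_m}(v),\indeg_{H_m}(u))=(p-m+1,m)$; in $K_m$ both $(v,u)$ and an incoming edge of $v$ are removed (and a new edge into $u$ added), so $(\indeg_{K_m}(v),\indeg_{K_m}(u))=(p-m,m)$; and $(\indeg_{H_{p+1}}(v),\indeg_{H_{p+1}}(u))=(0,p+1)$. Writing $i=m$ for the $H$'s and $j=m$ for the $K$'s gives exactly the asserted pairs $(p+1-i,i)$ for $i\in[p+1]$ and $(p-j,j)$ for $j\in[p]$, and the count $2p+1=(p+1)+p$ is consistent, since the first $p-1$ reductions each emit two graphs terminal at $v$ and the last reduction emits three.

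As in the proof of \cite[Lemma 3]{prod}, and as Figure \ref{cornerstonelemmaexample} shows, this is pure bookkeeping and there is no serious obstacle; the two things worth checking carefully are (a) that the $G_1$- and $G_3$-branches really cannot be reduced again at $v$, so that the tree of $v$-reductions is a path, and (b) that $\indeg(u)$ is tracked correctly, the subtle point being that the $G_1$- and $G_3$-steps leave it unchanged, because a new edge into $u$ appears exactly as $(v,u)$ disappears, while the $G_2$-step increases it by one. The special reduction order is not actually needed for the in-degree count, which is insensitive to which incoming edge of $v$ is used at each step, but it singles out the particular graphs $H_i$ and $K_j$ that we use in the later sections, and one should note that a reduction at $v$ on the longest remaining incoming edge stays available throughout, until $\indeg(v)$ reaches $0$.
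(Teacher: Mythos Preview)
Your argument is correct and matches the paper's intended proof, which it leaves to the reader with only a reference to \cite[Lemma~3]{prod} and Figure~\ref{cornerstonelemmaexample}; the caterpillar picture you describe, with the $G_2$-branch as the spine $C_1,\dots,C_{p+1}$ and the $G_1$- and $G_3$-children as the terminal leaves $H_m$ and $K_m$, is exactly what that figure depicts. Your observation that the special reduction order is immaterial for the in-degree statement (it only pins down \emph{which} graphs $H_i,K_j$ arise, not their in-degree pairs) is a nice clarification beyond what the paper says.
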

	Note that the previous lemma vacuously yields only $H_1=G$ if $p=0$.
	
	\begin{figure}[h]
		\begin{center}
			\includegraphics[scale=.8]{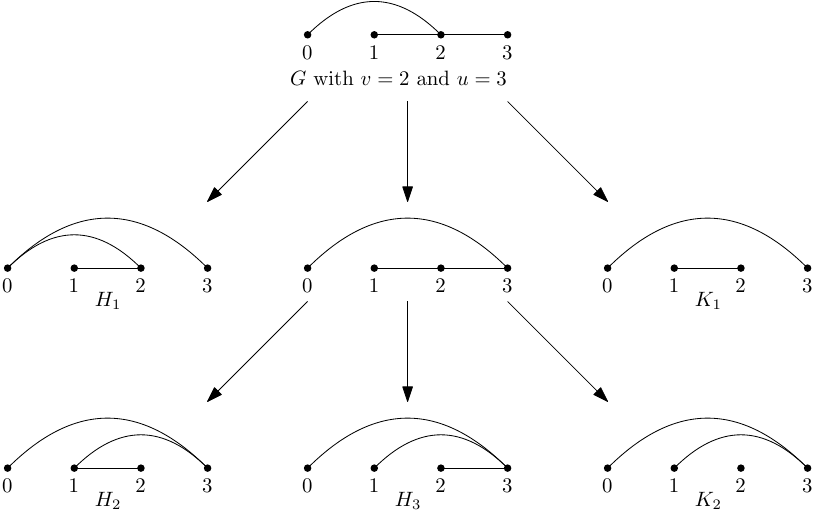}
		\end{center}
		\caption{The graphs $H_i$ and $K_j$ of Lemma \ref{cornerstonelemma}.}
		\label{cornerstonelemmaexample}
	\end{figure}
	
	We now construct a specific reduction tree $\mathcal{T}(G)$ and characterize the left-degree sequences of its leaves. Denote by $I_i$ the set of incoming edges to vertex $i$ in $G$. Let $V_i$ be the set of vertices $k$ with $(k,i)\in I_i$, and let $G[0,i]$ be the restriction of $G$ to the vertices $[0,i]$. For any reduction tree $\mathcal{R}$, by $\inseq(\mathcal{R})$ we mean the multiset of left-degree sequences of the leaves of $\mathcal{R}$. Since we will build $\mathcal{T}(G)$ inductively from $\mathcal{T}(H)$ for smaller graphs $H$, it is convenient to let $\inseq^n(\mathcal{R})$ denote the multiset $\inseq(\mathcal{R})$ with each sequence padded on the right with zeros to have length $n$.
	
	We proceed using the following algorithm, analogous to the one described in \cite{prod}.
	\begin{itemize}
		\item For the base case, define the reduction tree $\mathcal{T}(G[0,1])$ to be the single leaf $G[0,1]$.
		Hence, \[\mathrm{InSeq}(\mathcal{T}(G[0,1]))=\{(\indegG(1)) \}.\]
		
		\item Having built $\mathcal{T}(G[0,i])$, construct the reduction tree $\mathcal{T}(G[0,i+1])$ from $\mathcal{T}(G[0,i])$ by appending the vertex $i+1$ and the edges $I_{i+1}$ to all graphs in $\mathcal{T}(G[0,i])$ and then performing reductions at each vertex in $V_{i+1}$ on all graphs corresponding to the leaves of $\mathcal{T}(G[0,i])$ in the special reduction order as described above Lemma \ref{cornerstonelemma}.
		
		\item Let $V_{i+1}=\{i_1< i_2<\cdots <  i_k\}$ and let $(s_1,\ldots, s_{n})\in\mathrm{InSeq}^{n}(\mathcal{T}(G[0,i]))$. Applying Lemma \ref{cornerstonelemma} to each of the vertices $i_1,\ldots,i_k$, we see that the leaves of $\mathcal{T}(G[0,i+ 1])$ which are descendants of the graph with $n$-left-degree sequence $(s_1,\ldots, s_{n})$ in $\mathcal{T}(G[0,i])$ will have $n$-left-degree sequences exactly given by
		\[  
		(s_1,\ldots, s_{n})+ v^{i+1}[i_1]+\cdots+v^{i+1}[i_k]
		\]
		
		\medskip
		\noindent where $v^{i+1}[i_l]\in S_1(i_l)\cup S_2(i_l)$ and $S_1$, $S_2$ are given by
		\medskip
		\begin{align*}
			S_1(i_l)&=\{(c_1,\ldots, c_{n})\mid  c_j=0 \mbox{ for } j\notin \{i_l,i+1 \}, c_{i_l}=1-s, \mbox{ and } c_{i+1}=s-1 \mbox{ for } s\in [s_{i_l}+1]\}, \\
			S_2(i_l)&=\{(c_1,\ldots, c_{n})\mid  c_j=0 \mbox{ for } j\notin \{i_l,i+1 \}, c_{i_l}=-s, \mbox{ and } c_{i+1}=s-1 \mbox{ for } s\in [s_{i_l}]\}. 
		\end{align*}
	\end{itemize}
	
	\medskip
	\begin{definition}
		\label{specialreductiontree}
		For a simple graph $G$ on $[0,n]$, denote by $\mathcal{T}(G)$ the specific reduction tree constructed using the algorithm described above. Denote by $\mathrm{LD}(G)$ the multiset $\inseq(\mathcal{T}(G))$.
	\end{definition}

	We prove the following surprising property of $\LD(G)$ in Section 6, where we drop the assumption that $G$ be simple.
	\begin{letteredtheorem}
		\label{theoremA}
		Let $G$ be any (not necessarily simple) graph on $[0,n]$. Then for any reduction tree $\mathcal{R}$ of $G$,
		\[\mathrm{LD}(G)=\inseq(\mathcal{R}).\]
	\end{letteredtheorem}
	
	\begin{definition}
		\label{def:triGempty}
		To each leaf $L$ of $\mathcal{T}(G)$, associate the triangular array of numbers $\Arr(L)$ given by 
		\[
		\begin{tabular}{llllll}
		$a_{n1}$ & $a_{n-1,1}$ & $\cdots $&$ a_{31}$&$ a_{21}$&$a_{11}$\\
		$a_{n2}$ &$ a_{n-1,2}$ &$ \cdots $&$ a_{32}$&$a_{22}$& \\ 
		\hspace{2ex}$\vdots$&\hspace{2ex}$\vdots$&$\Ddots$&& & \\
		$a_{n,n-1}$&$a_{n-1, n-1}$&&&&\\
		$a_{nn}$&&&&&
		\end{tabular}
		\]
		
		\noindent where $(a_{i1}, a_{i2}, \ldots, a_{ii})$ is the left-degree sequence of the leaf of $\mathcal{T}(G[0,i])$ preceding (or equaling if $i=n$) $L$ in the construction of $\mathcal{T}(G)$.
	\end{definition}
	
	\begin{theorem}[\cite{prod}, Theorem 4]
		\label{arrayconstraints}
		The arrays $\Arr(L)$ for full-dimensional leaves $L$ of $\mathcal{T}(G)$ are exactly the nonnegative integer solutions in the variables $\{a_{ij}\mid  1\leq j\leq i\leq n \}$ to the constraints
		\begin{itemize}
			\item $a_{11}= \#E(G[0,1])$
			\item $a_{ij}\leq a_{i-1,j} \mbox{ if } (j,i)\in E(G)$
			\item $a_{ij}=a_{i-1,j}$ \mbox{ if } $(j,i)\notin E(G)$ 
			\item $a_{ii}=\#E(G[0,i]) - \sum_{k=1}^{i-1}{a_{ik}}$
		\end{itemize}
	\end{theorem}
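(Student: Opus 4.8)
The plan is to prove Theorem~\ref{arrayconstraints} by induction on $i$, establishing the stronger statement that the truncated arrays $(a_{i',j})_{1\le j\le i'\le i}$ coming from the full-dimensional leaves of $\mathcal{T}(G[0,i])$ are exactly the nonnegative integer solutions of the four families of constraints restricted to indices $\le i$; the theorem is then the case $i=n$. The base case $i=1$ is immediate: $\mathcal{T}(G[0,1])$ is the single leaf $G[0,1]$, all of whose edges point into vertex $1$, so its unique left-degree sequence is $(\indeg_G(1))=(\#E(G[0,1]))$, which is the only solution to the only relevant constraint $a_{1,1}=\#E(G[0,1])$.

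For the inductive step, recall from the algorithm that $\mathcal{T}(G[0,i+1])$ is built from $\mathcal{T}(G[0,i])$ by attaching vertex $i+1$ and the edges $I_{i+1}$ to each leaf graph $L'$ and then performing, in the special reduction order, every reduction supported on an edge into $i+1$, i.e.\ every reduction at a vertex of $V_{i+1}$. I would process the vertices of $V_{i+1}$ in increasing order $i_1<\dots<i_k$, so that a reduction at $i_l$---which can only create edges $(i',i+1)$ with $i'<i_l$---never alters the local structure at a not-yet-processed vertex, and hence Lemma~\ref{cornerstonelemma} applies verbatim at each $i_l$: after attaching $(i_l,i+1)$, vertex $i_l$ has exactly one outgoing edge and $p=\indeg_{L'}(i_l)$ incoming edges, and the full-dimensional leaves among its descendants are precisely the graphs $H_m$, $m\in[p+1]$, in which $\indeg(i_l)$ has been set to the arbitrary value $p+1-m\in\{0,1,\dots,p\}$ (the $K_j$ contribute only lower-dimensional leaves and are discarded here). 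A vertex $k\le i$ with $(k,i+1)\notin E(G)$ is never the site of such a reduction, so its left-degree is unchanged. Finally, since the reductions producing full-dimensional leaves preserve the number of edges and every edge contributes $1$ to the indegree of its head, a resulting full-dimensional leaf $L''$ of $\mathcal{T}(G[0,i+1])$ satisfies $\sum_{k=1}^{i+1}\indeg_{L''}(k)=\#E(L'')=\#E(G[0,i+1])$, whence $\indeg_{L''}(i+1)=\#E(G[0,i+1])-\sum_{k=1}^{i}\indeg_{L''}(k)$. Writing $a_{i+1,j}=\indeg_{L''}(j)$ and $a_{i,j}=\indeg_{L'}(j)$, these three facts are exactly the constraints $a_{i+1,j}\le a_{i,j}$ for $(j,i+1)\in E(G)$, $a_{i+1,j}=a_{i,j}$ for $(j,i+1)\notin E(G)$, and $a_{i+1,i+1}=\#E(G[0,i+1])-\sum_{k\le i}a_{i+1,k}$; combined with the inductive hypothesis on the rows $\le i$, this shows every array of a full-dimensional leaf solves the constraints.

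For the converse inclusion, given a nonnegative integer solution $(a_{i',j})_{i'\le i+1}$, its restriction to the rows $\le i$ is $\Arr(L')$ for some full-dimensional leaf $L'$ of $\mathcal{T}(G[0,i])$ by the inductive hypothesis; the prescribed values $a_{i+1,j}$ with $j\in V_{i+1}$ each lie in $\{0,\dots,a_{i,j}\}=\{0,\dots,\indeg_{L'}(j)\}$ by the second constraint, so they are simultaneously attained by choosing, at each $i_l\in V_{i+1}$, the graph $H_m$ of Lemma~\ref{cornerstonelemma} with $m=\indeg_{L'}(i_l)+1-a_{i+1,i_l}\in[p+1]$; the values $a_{i+1,j}$ for $j\notin V_{i+1}$ are forced and automatically agree; and the fourth constraint, matched against the edge-count identity above, shows $a_{i+1,i+1}$ equals the (forced) left-degree of $i+1$ in the resulting leaf $L''$. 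Hence the solution is $\Arr(L'')$ for a full-dimensional leaf, completing the induction. It is worth recording that nonnegativity of $a_{i+1,i+1}$ is automatic, since $\sum_{k\le i}a_{i+1,k}\le\sum_{k\le i}a_{i,k}=\#E(G[0,i])\le\#E(G[0,i+1])$.

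The main obstacle is the bookkeeping in the inductive step: one must check that the reductions at the several vertices of $V_{i+1}$ can be performed (in the special reduction order) so that the left-degree at each $i_l\in V_{i+1}$ is set \emph{independently} to any value in $\{0,\dots,\indeg_{L'}(i_l)\}$ while the left-degrees of all other vertices $\le i$ stay untouched and the left-degree of $i+1$ is the forced one. This independence is precisely what Lemma~\ref{cornerstonelemma} supplies, once one verifies that processing $V_{i+1}$ in increasing order keeps the hypotheses of that lemma (one outgoing edge, no interference between vertices) intact at each step; this is the heart of the argument and is where the combinatorics of \cite{prod} is concentrated. Everything else---the base case, the edge-count identity, and the translation between the three structural facts and the four constraint families---is routine.
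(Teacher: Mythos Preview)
Your proof is correct and follows precisely the inductive approach that the paper's construction of $\mathcal{T}(G)$ (the algorithm preceding Definition~\ref{specialreductiontree}) is designed to suggest; since the paper does not reprove this result but cites it from \cite{prod}, your argument is essentially a faithful reconstruction of that original proof, with the key input being Lemma~\ref{cornerstonelemma} applied at each vertex of $V_{i+1}$ in increasing order.
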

	\medskip
	
	\begin{example}
		\label{triangulararrayofconstraintsexample}
		If $G$ is the graph on $[0,4]$ with \[E(G)=\{(0,1),(0,2),(1,2),(2,3),(2,4),(3,4)\},\] 
		then Theorem \ref{arrayconstraints} gives the inequalities
		\begin{align*}
				&0\leq a_{41}=a_{31}= a_{21}\leq a_{11}=1\\
				&0\leq a_{42}\leq a_{32}\leq a_{22}=3-a_{21}\\
				&0\leq a_{43}\leq a_{33}= 4-a_{31}-a_{32}\\
				&0\leq a_{44}=6-a_{41}-a_{42}-a_{42}
		\end{align*}
		The first columns
		\[(a_{41},a_{42},a_{43},a_{44}) \] of solutions to these inequalities are exactly the full-dimensional left-degree sequences of $G$.
	\end{example}
	Given a graph $G$, we write the constraints specified in Theorem \ref{arrayconstraints}  in the form shown in Example \ref{triangulararrayofconstraintsexample} and call them the \newword{triangular constraint array} of $G$. We proceed by generalizing triangular constraint arrays to encode the lower-dimensional leaves of $\mathcal{T}(G)$ as well.
	
	\begin{definition} 
		\label{solg}
		Denote by $\Tri_G(\emptyset)$, or when the context is clear, by $\Tri(\emptyset)$,  the triangular constraint array of $G$. For each subset $F\subseteq E(G\backslash 0)$ (recall that $G$ is simple in this section), define a constraint array $\Tri(F)$ by modifying $\Tri(\emptyset)$ as follows: for each $(j,i)\in F$ and each ordered pair $(m,j)$ with $n\geq m\geq i$, replace each occurrence (anywhere in the inequalities) of $a_{mj}$ by $a_{mj}+1$ and add 1 to the constant at the leftmost edge of row $j$.
		Denote by  $\Sol_G(F)$, or  when the context is clear, by $\Sol(F)$, the collection of all integer solution arrays to the constraints $\Tri(F)$.
	\end{definition}
	
	\begin{example}
		\label{generalizedarrayexample}
		With $G$ as in Example \ref{triangulararrayofconstraintsexample} and $F=\{(2,3),(2,4),(3,4)\}$, we have\\
		\[ \Tri(F):
		\begin{tabular}{l}
		0$\leq a_{41}=a_{31}= a_{21}\leq a_{11}=1$\\
		2$\leq a_{42}+2\leq a_{32}+1\leq a_{22}=3-a_{21}$\\
		1$\leq a_{43}+1\leq a_{33}= 3-a_{31}-a_{32}$\\
		0$\leq a_{44}=3-a_{41}-a_{42}-a_{43}$
		\end{tabular}
		\]
	\end{example}
	
	\noindent The characterization of $\mathrm{LD}(G)=\inseq(\mathcal{T}(G))$ given in the construction of $\mathcal{T}(G)$ implies the following theorem.
	
	\begin{theorem}
		\label{multisetequality}
		The leaves of $\mathcal{T}(G)$ are in bijection with the multiset union of solutions to the arrays $\Tri(F)$, that is 
		
		\[\{\Arr(L)\mid  L \mbox{ is a leaf of }\mathcal{T}(G)\} = \bigcup_{F\subseteq E(G\backslash 0)} \mathrm{Sol}_G(F).\]
		
		\noindent In particular, $\mathrm{LD}(G)$ is the (multiset) image of the right-hand side under the map that takes a triangular array to its first column  $\left (a_{n1}, \ldots, a_{nn}\right)$.
	\end{theorem}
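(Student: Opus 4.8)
The plan is to unwind the inductive construction of $\mathcal{T}(G)$ and match each leaf, level by level, with exactly one solution array to exactly one constraint array $\Tri(F)$. The construction already tells us precisely how the $n$-left-degree sequence of a leaf of $\mathcal{T}(G[0,i+1])$ is obtained from that of its ancestor in $\mathcal{T}(G[0,i])$: for each $i_l \in V_{i+1}$ we add a vector $v^{i+1}[i_l]$ chosen from $S_1(i_l) \cup S_2(i_l)$. The dichotomy $S_1$ versus $S_2$ is exactly the combinatorial shadow of Lemma \ref{cornerstonelemma}: the graphs $H_i$ (full-dimensional, $p+1$ of them) correspond to the $p+1$ choices in $S_1$, while the graphs $K_j$ (one edge fewer, $p$ of them) correspond to the $p$ choices in $S_2$. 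So I would track, for each leaf $L$, the set $F(L) \subseteq E(G\backslash 0)$ of edges $(j,i)$ for which the $S_2$-option was taken when appending vertex $i$ and reducing at $j$; edges of $G\backslash 0$ for which $S_1$ was chosen, together with all edges incident to $0$, are not in $F(L)$. This assignment $L \mapsto F(L)$ is the bijection; the triangular array $\Arr(L)$ will be the distinguished solution.

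Next I would verify that $\Arr(L)$ satisfies $\Tri(F(L))$ and, conversely, that every solution of $\Tri(F)$ arises this way for a unique leaf with $F(L) = F$. For the forward direction: reading off Definition \ref{arrdefinition}, the entry $a_{i,j}$ records $\indeg(j)$ in the relevant leaf of $\mathcal{T}(G[0,i])$; passing from row-index $i-1$ to $i$ either leaves $a_{i,j} = a_{i-1,j}$ (when $(j,i)\notin E(G)$, nothing happens at $j$), or, when $(j,i)\in E(G)$, decreases it by the amount $s$ "pushed" to vertex $i$ via $v^{i+1}$, with the $S_1$ case allowing $a_{i,j} = a_{i-1,j} - s$ for $s \in [a_{i-1,j}+1]$ (so $a_{i,j} \le a_{i-1,j}$, no shift) and the $S_2$ case giving $a_{i,j} = a_{i-1,j} - 1 - s$ for $s\in[a_{i-1,j}]$, i.e. $a_{i,j} + 1 \le a_{i-1,j}$ — which is precisely the "$a_{m,j} \mapsto a_{m,j}+1$" modification in Definition \ref{solg}, and the "add $1$ to the constant at the leftmost edge of row $j$" accounts for the extra edge that each $S_2$-reduction removes from the running graph $G[0,i]$, cf. the change of $\#E$ in the last constraint $a_{i,i} = \#E(G[0,i]) - \sum_k a_{i,k}$. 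Propagating a single $F$-edge $(j,i)$ through all rows $m \ge i$ is exactly the "for each ordered pair $(m,j)$ with $n \ge m \ge i$" clause. So $\Tri(F(L))$ is satisfied. The converse is the same computation run backwards: given a solution to $\Tri(F)$, the triangular entries uniquely reconstruct, row by row, the sequence of $v^{i+1}$ vectors (the differences $a_{i-1,j} - a_{i,j}$ together with whether $(j,i)\in F$ determine which of $S_1(j)$, $S_2(j)$ and which index $s$ was used), hence a unique path in $\mathcal{T}(G)$ from the root to a leaf $L$ with $\Arr(L)$ the given array and $F(L) = F$.

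The last sentence of the theorem — that $\inseq(\mathcal{T}(G))$ is the image under "take the first column" — is then immediate, since by Definition \ref{arrdefinition} the first column $(a_{n,1},\ldots,a_{n,n})$ of $\Arr(L)$ is by definition the left-degree sequence of $L$ itself.

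I expect the main obstacle to be bookkeeping rather than conceptual: one must carefully check that the bookkeeping for the constant terms (the "$\#E(G[0,i])$" appearing in the last constraint of each triangular array, which shifts by $1$ exactly when an $S_2$-reduction is performed) is consistent with the "add $1$ to the constant at the leftmost edge of row $j$" prescription in Definition \ref{solg}, and that distinct choices of $F \subseteq E(G\backslash 0)$ really do index disjoint (multiset) blocks of leaves — i.e. that $F(L)$ is well-defined independent of any residual ordering ambiguity in the special reduction order. Both of these are forced by Lemma \ref{cornerstonelemma}, whose statement already packages the $H_i/K_j$ split in exactly the form needed, so the argument is really a matter of carefully transcribing that lemma's output through the inductive algorithm.
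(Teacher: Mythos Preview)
Your proposal is correct and follows essentially the same approach as the paper. The paper's own proof is the single sentence ``The characterization of $\inseq(\mathcal{T}(G))$ given in the construction of $\mathcal{T}(G)$ implies the following theorem,'' and your argument is a careful unpacking of exactly that implication; your assignment $L\mapsto F(L)$ via the $S_1$/$S_2$ dichotomy coincides with the paper's post-theorem combinatorial description of $\LD(G,F)$ in terms of the $G_3$-branch labels along the path from $L$ to the root.
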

	
	\begin{definition}
		\label{ldsequencesfromF}
		For any $F\subseteq E(G\backslash 0)$, denote by $\LD(G,F)$ the submultiset of $\mathrm{LD}(G)$ consisting of sequences occurring as the first column of an array in $\Sol(F)$. 
	\end{definition}

	As a consequence of Theorem \ref{multisetequality}, \[\mathrm{LD}(G)=\bigcup_{F\subseteq E(G\backslash 0)}\LD(G,F). \]
	
	\begin{remark}
		Combinatorially, we can  think of $\LD(G,F)$  in the following way. Construct the reduction tree $\mathcal{T}(G)$ of $G$. Take any graph $H$ appearing as a node of $\mathcal{T}(G)$. Let $H$ have descendants $H_1$, $H_2$ and $H_3$ in $\mathcal{T}(G)$ obtained by the reduction on edges $(i,j)$ and $(j,k)$ in $H$ with $i<j<k$, so that $H_3$ has edge set $E(H)\backslash \{(i,j), (j,k)\} \cup \{(i,k)\}$. Label the edge in $\mathcal{T}(G)$ between $H$ and $H_3$ by $(j,k)$. To each leaf $L$ of $\mathcal{T}(G)$, associate the set of all labels on the edges of the unique path from $L$ to the root $G$ of $\mathcal{T}(G)$. The left-degree sequences of leaves assigned a set $F$ in this manner are exactly the elements of the multiset $\LD(G,F)$.
	\end{remark}
	 
	\begin{figure}[ht]
		\includegraphics[scale=.9]{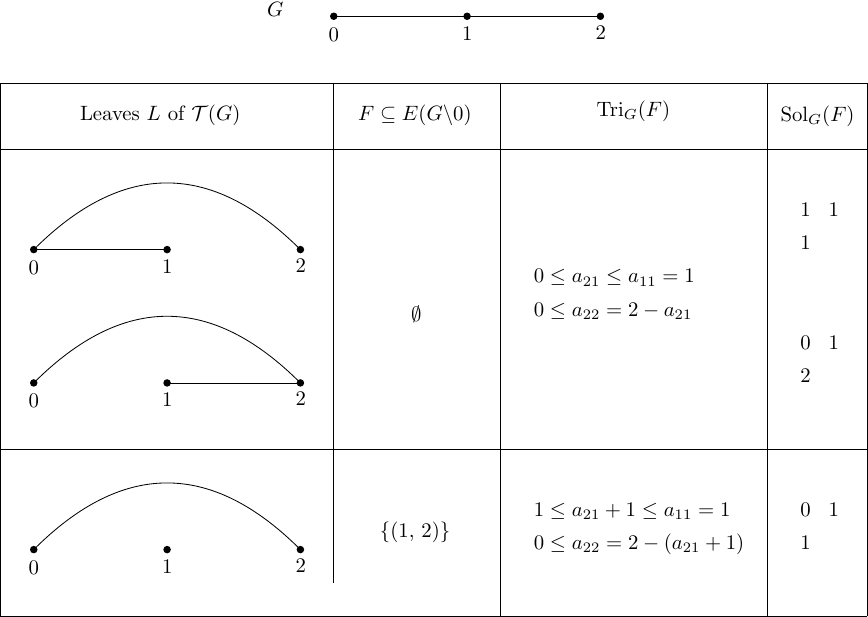}
		\caption{A small example demonstrating Theorem \ref{multisetequality}. In general, $\Sol_G(F)$ will be empty for many $F$.}
	\end{figure}
	
	To understand the multisets $\Sol(F)$ and $\LD(G,F)$, we connect the constraint arrays $\Tri(F)$ to flow polytopes. We begin by investigating the case where $G=K_{n+1}$ is the complete graph on $[0,n]$. Given $F\subseteq E(K_{n+1}\backslash 0)$, consider the numbers
	
	\begin{align}
		\label{fijnumbers} 
		f_{ij}=\#\{(j,k)\in F\mid  k\leq i\}. 
	\end{align}
	
	Observe that for each $F\subseteq E(K_{n+1}\backslash 0)$, $\Tri(F)$ is obtained from $\Tri(\emptyset)$ by replacing $a_{ij}$ in $\Tri(\emptyset)$ by $a_{ij}+f_{ij}$ and replacing the 0 in the leftmost spot of row $j$ by $f_{nj}$. Also note that $f_{jj}=0$ for each $j$. Thus, $\Tri(F)$ is given by
	\begin{align*}
			&f_{n1}\leq a_{n1}+f_{n1}\leq \cdots \leq  a_{21} +f_{21}\leq a_{11}+f_{11}= \#E(K_{n+1}[0,1])\\
			&f_{n2}\leq a_{n2}+f_{n2}\leq  \cdots \leq a_{22}+f_{22}= \#E(K_{n+1}[0,2])-a_{21}-f_{21} \\ 
			&\phantom{f_{n1}}\phantom{\leq}\vdots \mbox{\hspace{7ex}}\vdots\mbox{\hspace{7ex}}\Ddots  \\
			&f_{nn}\leq a_{nn} +f_{nn}= \#E(K_{n+1})-\displaystyle\sum_{k=1}^{n-1}{a_{nk}}-\displaystyle\sum_{k=1}^{n-1}{f_{nk}}
	\end{align*}
	
	Note that the real solution set in variables $\{a_{ij}\}$ to $\Tri(F)$ is a polytope in $\mathbb{R}^{\binom{n+1}{2}}$. For any constraint array $A$, denote by $\poly(A)$ the \newword{polytope defined by the inequalities in $A$}. We now work toward showing that the polytopes $\mathrm{Poly}(\Tri_G(F))$ are integrally equivalent to flow polytopes. We first continue analyzing the case of the complete graph. Fix $F\subseteq E(K_{n+1}\backslash 0)$.
	
	For $\{(i,j)\mid  1\leq j<i\leq n \}$, we introduce (nonnegative) slack variables $z_{ij}$ to convert the inequalities in $\poly(\Tri(F))$ into equations $Y_{ij}$, given by	
	\[
	Y_{ij}:\begin{cases}
	a_{ij}+f_{ij}+z_{ij}=a_{i-1,j}+f_{i-1,j} & \text{ if } i>j\\
	\displaystyle \sum_{k=1}^{i}a_{ik}+\sum_{k=1}^{i}f_{ik} = \#E(K_{n+1}[0,i])& \text{ if } i=j.
	\end{cases}
	\]
	
	\noindent Define an equivalent system of equations $\{Z'_{ij} \}$ by setting
	\[
	Z'_{ij}:\begin{cases}
	Y_{ij} & \text{ if } i>j \mbox{ or } i=j=1\\
	Y_{ij} - Y_{i-1,j-1} - \displaystyle \sum\limits_{k=1}^{j-1}{Y_{jk}} & \text{ if } i=j>1.
	\end{cases}
	\]
	We then modify each equation $Z'_{ij}$ by rearranging negated terms to get equations $Z_{ij}$ given by 
	\[
	Z_{ij}:\begin{cases}
	a_{ij}+z_{ij}=a_{i-1,j}+f_{i-1,j}-f_{ij} & \text{ if } i>j\\
	a_{ij}=\indegK(1) &\text{ if } i=j=1\\
	a_{ij} = \indegK(j)+ \displaystyle \sum\limits_{k=1}^{j-1}{z_{jk}} & \text{ if } i=j>1
	\end{cases}
	\]
	We now construct a graph $\gr(K_{n+1})$ whose flow polytope will be given by the equations $Z_{ij}$ (plus the conditions  $z_{ij}\geq 0$). Let the vertices of $\gr(K_{n+1})$ be 
	\[\{v_{ij}\mid  1\leq j\leq i\leq n \}\cup \{v_{n+1,n+1}\}\]
	with the ordering $v_{11}<v_{21}<\cdots < v_{n1}<v_{22}<\cdots<v_{nn}<v_{n+1,n+1}$.
	
	\noindent Let the edges of $\gr(K_{n+1})$ be labeled by the flow variables $a_{ij}$ and $z_{ij}$. 
	Set $E(\gr(K_{n+1}))=E_a\cup E_z$ where
	\begin{align*}
	&E_a \mbox{ consists of edges }a_{ij}:v_{ij}\to v_{i+1,j} \mbox{ for } 1\leq j \leq i\leq n \mbox{ and }\\
	&E_z \mbox{ consists of edges }z_{ij}:v_{ij}\to v_{ii} \mbox{ for } 1\leq j < i\leq n 
	\end{align*}
	and we take indices $(n+1,j)$ to refer to $(n+1,n+1)$.\\
	
	\noindent To define the netflow vector $\bm{a}_{K_{n+1}}^F$, we assign netflow $\indegK(j)$ to vertices $v_{jj}$ with $j<n+1$, we assign netflow 
	\[-\#E(K_{n+1})+\sum_{k=1}^{n-1}{f_{nk}}\] 
	to $v_{n+1,n+1}$, and we assign netflow $f_{i-1,j}-f_{ij}$ to each remaining  vertex $v_{ij}$. 
	\\
	
	\noindent The netflow vector $\bm{a}_{K_{n+1}}^F$ is given by reading each row of the triangular array
	\[
	\begin{tabular}{lllll}
	$f_{n-1,1}-f_{n1}$  & $f_{n-2,1}-f_{n-1,1} $&$\hspace{3ex}\cdots $&$f_{11}-f_{21}$&$ \indegK(1)$\\
	$f_{n-1,2}-f_{n2}$  &$ \hspace{5ex} \cdots $&$f_{22}-f_{32} $&$\indegK(2)$& \\ 
	\hspace{7ex}$\vdots$&$\Ddots$&&&  \\
	$\indegK(n)$&&&&\\
	\end{tabular}
	\]
	\noindent right to left starting with the first row, moving top to bottom, and then appending $-\#E(K_{n+1})+\sum_{k=1}^{n-1}{f_{nk}}$ at the end.
	
	\begin{lemma}
		\label{arraysgiveflowpolytopes}
		The polytopes
		\[\mathcal{F}_{\gr(K_{n+1})}(\bm{a}_{K_{n+1}}^F) \mbox{ and } \poly(\Tri(F)) \]
		are integrally equivalent.
	\end{lemma}
	\begin{proof}
		By construction, the flow equation at vertex $v_{ij}$ in $\gr(K_{n+1})$ is exactly the equation $Z_{ij}$ for $(i,j)\neq (n+1,n+1)$. At $v_{n+1,n+1}$, the flow equation is $Y_{nn}$, which follows from the equations $Z_{ij}$ and adds no additional restrictions. The result now follows from the fact that the transformation from $\{Y_{ij}\}_{i,j}$ to $\{Z_{ij}\}_{i,j}$ was unimodular.
	\end{proof}

	\begin{figure}
		\includegraphics[]{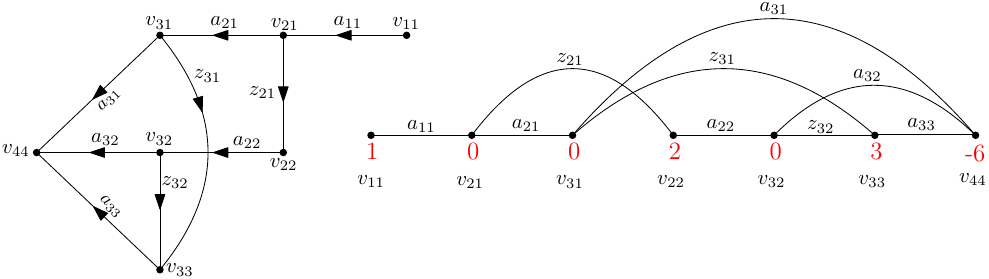}
		\caption{Two drawings of the graph $\gr(K_{n+1})$ of Lemma \ref{arraysgiveflowpolytopes}. The drawing on the right has the netflow vector $\bm{a}_{K_{n+1}}^\emptyset$.}
		\label{completegraph4demonstration}
	\end{figure}
	
	We now generalize Lemma \ref{arraysgiveflowpolytopes} to any simple graph $G$ on $[0,n]$. Note that for $F\subseteq E(G\backslash 0)$, $\Tri_G(F)$ can be obtained from $\Tri_{K_{n+1}}(F)$ by turning certain inequalities into equalities and changing all occurrences of $\#E(K_{n+1}[0,j])$ to $\#E(G[0,j])$ for each $j$. In terms of $\{Z_{ij}\}_{i,j}$, this amounts to setting $z_{ij}=0$ whenever $(j,i)\notin E(G)$. Relative to the graph $\gr(K_{n+1})$, this is equivalent to deleting the edges labeled $z_{ij}$ for $(j,i)\notin E(G)$. Thus, we have the following extension of $\gr(K_{n+1})$.
	
	\begin{definition}
		\label{tagsimplefinalversion}
		For a simple graph $G$ on $[0,n]$ define a graph $\gr(G)$ on vertices 
		\[\{v_{ij}\mid  1\leq j\leq i\leq n \}\cup \{v_{n+1,n+1}\}\]
		ordered $v_{11}<v_{21}<\cdots < v_{n1}<v_{22}<\cdots<v_{nn}<v_{n+1,n+1}$ and with edges 
		$E_a\cup E_z$ where
		\begin{align*}
		&E_a \mbox{ consists of edges }a_{ij}:v_{ij}\to v_{i+1,j} \mbox{ for } 1\leq j \leq i\leq n \mbox{ and }\\
		&E_z \mbox{ consists of edges }z_{ij}:v_{ij}\to v_{ii} \mbox{ for } (j,i)\in E(G\backslash 0).
		\end{align*}
		For any $F\subseteq E(G\backslash 0)$, we define a netflow vector $\bm{a}_G^F$ for $\gr(G)$ by reading each row of the triangular array
		\[
		\begin{tabular}{lllll}
		$f_{n-1,1}-f_{n1}$  & $f_{n-2,1}-f_{n-1,1} $&$\hspace{3ex}\cdots $&$f_{11}-f_{21}$&$ \indegG(1)$\\
		$f_{n-1,2}-f_{n2}$  &$ \hspace{5ex} \cdots $&$f_{22}-f_{32} $&$\indegG(2)$& \\ 
		\hspace{7ex}$\vdots$&$\Ddots$&&&  \\
		$\indegG(n)$&&&&\\
		\end{tabular}
		\]
		\noindent right to left starting with the first row, moving top to bottom, and then appending \\$-\#E(G)+\sum_{k=1}^{n-1}{f_{nk}}$ at the end, where again, $f_{ij}=\#\{(j,k)\in F\mid  k\leq i\}$.
	\end{definition}
	
	We now have the following extension of Lemma \ref{arraysgiveflowpolytopes} to all simple graphs.
	\begin{proposition}
		\label{tapisflowpolytope}
		Let $G$ be a simple graph on $[0,n]$ and $F\subseteq E(G\backslash 0)$. Then,
		$\poly(\Tri_G(F))$ is integrally equivalent to $\mathcal{F}_{\gr(G)}(\bm{a}_G^F)$. In particular, the multiset of solutions $\Sol_G(F)$ to $\Tri_G(F)$ consists precisely of the projections of integral flows on $\gr(G)$ with netflow $\bm{a}_G^F$ onto the edges labeled $\{a_{ij}\}$.
	\end{proposition}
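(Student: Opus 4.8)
The plan is to bootstrap from Lemma \ref{arraysgiveflowpolytopes}, whose proof already produces an explicit affine change of variables identifying $\poly(\Tri_{K_{n+1}}(F))$ with $\mathcal{F}_{\gr(K_{n+1})}(\bm{a}_{K_{n+1}}^F)$. Recall that change of variables: to each inequality $a_{i,\,j}+f_{i,\,j}\le a_{i-1,\,j}+f_{i-1,\,j}$ (for $i>j$) one associates a slack variable $z_{i,\,j}$ turning it into the flow equation $Z_{i,\,j}$ at vertex $v_{i,\,j}$ of $\gr(K_{n+1})$, and the resulting map $(a_{i,\,j})\mapsto(a_{i,\,j},z_{i,\,j})$ is an affine bijection from $\poly(\Tri_{K_{n+1}}(F))$ onto $\mathcal{F}_{\gr(K_{n+1})}(\bm{a}_{K_{n+1}}^F)$. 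Since each $z_{i,\,j}$ is an integer-coefficient affine function of the $a$'s, this map is unimodular, hence an integral equivalence, and its inverse is the projection onto the edges labeled $\{a_{i,\,j}\}$. This is exactly the content one wants, but for $K_{n+1}$; it remains to pass from $K_{n+1}$ to a general simple $G$.

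To do that, I would compare the two constraint arrays directly, as indicated in the paragraph preceding Definition \ref{tagsimplefinalversion}. The array $\Tri_G(F)$ is obtained from $\Tri_{K_{n+1}}(F)$ by exactly two kinds of changes: (i) for every non-edge $(j,\,i)\notin E(G\backslash 0)$ with $1\le j<i\le n$, the inequality $a_{i,\,j}+f_{i,\,j}\le a_{i-1,\,j}+f_{i-1,\,j}$ is replaced by the corresponding equality (the numbers $f_{i,\,j}$ are unchanged, since $F\subseteq E(G\backslash 0)$ does not depend on the ambient graph); and (ii) every occurrence of $\#E(K_{n+1}[0,j])$ is replaced by $\#E(G[0,j])$. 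Under the change of variables above, change (i) is precisely the statement $z_{i,\,j}=0$. Forcing a single flow coordinate to be identically zero on a flow polytope and then deleting the corresponding edge yields an integrally equivalent flow polytope on the smaller graph — the affine hull and its lattice of integer points both restrict correctly — so performing this deletion for every $(j,\,i)\notin E(G\backslash 0)$ turns $\gr(K_{n+1})$ into the graph with edge set $E_a\cup\{z_{i,\,j}:(j,\,i)\in E(G\backslash 0)\}$, which is exactly $\gr(G)$ of Definition \ref{tagsimplefinalversion}.

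Change (ii) affects only constant terms, hence only the netflow vector and not the underlying graph. Tracking it through the equations $Z_{i,\,j}$: at each vertex $v_{j,\,j}$ the constant $\indegK(j)=\#E(K_{n+1}[0,j])-\#E(K_{n+1}[0,j-1])$ becomes $\indegG(j)$; at $v_{n+1,\,n+1}$ the constant becomes $-\#E(G)+\sum_{k=1}^{n-1}f_{n,\,k}$; and the netflows $f_{i-1,\,j}-f_{i,\,j}$ at all remaining vertices are untouched. Reading these values off in the order prescribed in Definition \ref{tagsimplefinalversion} recovers $\bm{a}_G^F$ verbatim. Composing the change of variables of Lemma \ref{arraysgiveflowpolytopes} with these edge deletions therefore gives an integral equivalence $\poly(\Tri_G(F))\equiv\mathcal{F}_{\gr(G)}(\bm{a}_G^F)$ whose inverse is the projection onto the $\{a_{i,\,j}\}$-labeled edges; because integral equivalence restricts to a bijection on lattice points, the integer solution arrays $\Sol(F)$ of $\Tri_G(F)$ are exactly the images of the integral flows of $\mathcal{F}_{\gr(G)}(\bm{a}_G^F)$ under this projection, which is the second assertion.

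The routine-but-delicate part of this argument is the constant bookkeeping: verifying that the $f_{i,\,j}$'s match across the two settings, that the bottom entry $-\#E(G)+\sum_{k=1}^{n-1}f_{n,\,k}$ at $v_{n+1,\,n+1}$ is correct (including the ``no additional restriction'' consistency noted at the end of the proof of Lemma \ref{arraysgiveflowpolytopes}, which carries over unchanged), and that the rows of the triangular array in Definition \ref{tagsimplefinalversion} are read off in exactly the order in which the corresponding vertices $v_{i,\,j}$ appear. I expect no genuine obstacle beyond this; the only conceptual point needing a sentence of justification is that deleting an edge carrying identically-zero flow preserves integral equivalence, and that is immediate from the definitions.
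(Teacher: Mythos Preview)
Your proposal is correct and follows essentially the same approach as the paper: the paper does not write a separate proof of this proposition but rather explains in the paragraph preceding Definition~\ref{tagsimplefinalversion} that passing from $K_{n+1}$ to $G$ amounts to setting $z_{i,\,j}=0$ for each non-edge $(j,\,i)\notin E(G)$ (equivalently deleting those edges from $\gr(K_{n+1})$) and replacing the edge counts $\#E(K_{n+1}[0,j])$ by $\#E(G[0,j])$, which is exactly your argument. Your write-up spells out the constant bookkeeping and the integral-equivalence step more carefully than the paper does, but the underlying idea is identical.
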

	
	\begin{example}
		Let $G$ be the graph on $[0,4]$ with
		\[E(G)=\{(0,1),(0,2),(1,2), (2,3),(2,4),(3,4) \}\] and $F=\{(2,3)\}$. The graph $\gr(G)$ and its netflow vector $\bm{a}_G^F$ are shown in Figure \ref{fig:grg}.
		
		Observe that contracting the edges $\{a_{11}, a_{21}, a_{31}, a_{22}, a_{32},a_{33} \}$ in $\gr(G)$ yields the graph shown in Figure \ref{fig:grgcontracted}, which is exactly $\widetilde{G}\backslash \{s,0\}$. The next result shows that this occurs in general.
	\end{example}
	\begin{figure}[ht]
		\includegraphics[scale=.9]{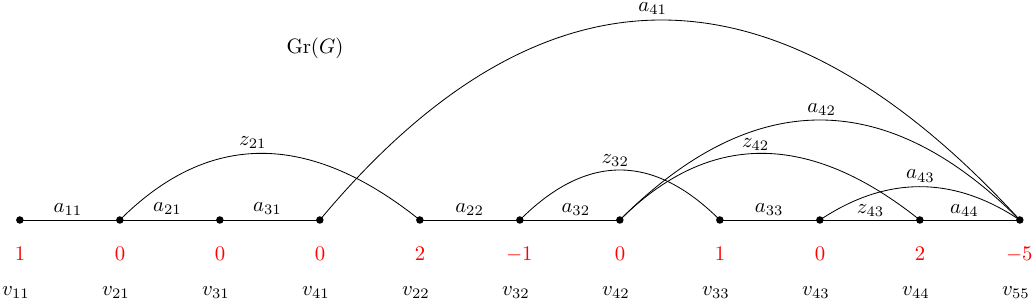}
		\caption{The graph $\gr(G)$ when $E(G)=\{(0,1),(0,2),(1,2), (2,3),(2,4),(3,4) \}$.}
		\label{fig:grg}
	\end{figure}

	\begin{figure}[ht]
		\includegraphics[scale=1]{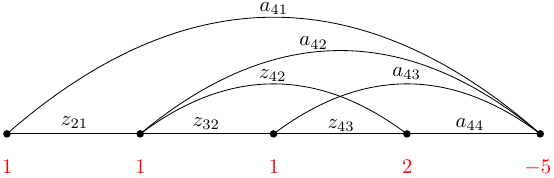}
		\caption{The graph $\gr(G)/\{a_{11}, a_{21}, a_{31}, a_{22}, a_{32},a_{33} \}$}
		\label{fig:grgcontracted}
	\end{figure}

	\medskip
	 For a graph $G$ and a subset $F\subseteq E(G\backslash 0)$, view $F$ as a subgraph of $G$ on the same vertex set. Note that for each $j$,
	 \[f_{nj}=\#\{(j,k)\in F\mid  k\leq n\}=\outdegF(j)\]
	 and the number 
	 \[-\#E(G)+\sum_{k=1}^{n-1}{f_{nk}}\]
	 appearing as the last entry of $\bm{a}_G^F$ equals $-\#E(G\backslash F)$.
	
	\begin{theorem}
		\label{isomorphism}
		Let $G$ be a simple graph on $[0,n]$ and $F\subseteq E(G\backslash 0)$. Then, the flow polytopes 
		\[\mathcal{F}_{\gr(G)}\left (\bm{a}_G^F \right ) \mbox{ and } \mathcal{F}_{\widetilde{G}\backslash\{s,0\}}\left (\mathrm{indeg}_G(1)-\mathrm{outdeg}_F(1),\ldots, \mathrm{indeg}_G(n)-\mathrm{outdeg}_F(n), -\#E(G\backslash F) \right )\] are integrally equivalent.
	\end{theorem}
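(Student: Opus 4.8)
The plan is to exhibit the integral equivalence as an explicit sequence of edge-contractions of $\gr(G)$, each justified by Lemma \ref{contraction}, after which the resulting graph is literally $\widetilde{G}\backslash\{s,0\}$ up to a relabeling of vertices and the accumulated netflow vector is $(d_1^F,\ldots,d_n^F,-\#E(G\backslash F))$. To prepare, it helps to record the local structure of $\gr(G)$ from Definition \ref{tagsimplefinalversion}. Group the vertices into \emph{columns}, where column $j$ consists of $v_{j,j}<v_{j+1,j}<\cdots<v_{n,j}$ joined consecutively by $a_{j,j},\ldots,a_{n-1,j}$ and then by $a_{n,j}$ to $v_{n+1,n+1}$; the only other edges meeting column $j$ are the back-edges $z_{i,j}\colon v_{i,j}\to v_{i,i}$ for $(j,i)\in E(G\backslash 0)$, each of which leaves column $j$ and enters the top vertex $v_{i,i}$ of column $i$. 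Inspecting this edge set, for every $j<i\le n$ the vertex $v_{i,j}$ has exactly one incoming edge, namely $a_{i-1,j}$ (a $z$-edge can never end at $v_{i,j}$ since $i\neq j$), and its netflow is $f_{i-1,j}-f_{i,j}\le 0$ because $i\mapsto f_{i,j}$ is nondecreasing; moreover $v_{i,j}$ and $v_{i+1,j}$ are joined only by $a_{i,j}$, so contracting it creates no loop.

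The main step is to contract, one at a time and in any order, all edges $a_{i,j}$ with $1\le j\le i\le n-1$. When such an edge is contracted its head is a vertex $v_{i+1,j}$ with $j<i+1\le n$; by the previous paragraph this vertex has a unique incoming edge and nonpositive netflow, and this stays true after the earlier contractions, since those only identify vertices within a single column and never create a new edge entering an interior vertex of a column. Hence the second (``one incoming edge'') form of Lemma \ref{contraction} applies at each step, and the full sequence of contractions produces a flow polytope integrally equivalent to $\mathcal{F}_{\gr(G)}(\bm{a}_G^F)$. After all the contractions, column $j$ has collapsed to a single vertex $w_j$, and by transitivity of integral equivalence the netflow accumulated at $w_j$ is the sum of the netflows along column $j$:
\[
\indegG(j)+\sum_{i=j+1}^{n}(f_{i-1,j}-f_{i,j})=\indegG(j)-f_{n,j}=\indegG(j)-\outdegF(j)=d_j^F ,
\]
where we used $f_{j,j}=0$ and $f_{n,j}=\outdegF(j)$. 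The vertex $v_{n+1,n+1}$ is never touched, so it keeps its netflow $-\#E(G)+\sum_{k=1}^{n-1}f_{n,k}=-\#E(G\backslash F)$, as recorded just before the theorem.

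It remains to identify the contracted graph with $\widetilde{G}\backslash\{s,0\}$: relabel $w_j$ by $j$ for $j\in[n]$ and $v_{n+1,n+1}$ by $t$. The surviving edges are the $a_{n,j}$, now running from $j$ to $t$, giving exactly the edges $(j,t)$, $j\in[n]$; and the back-edges $z_{i,j}$, now running from $j$ to $i$ for $(j,i)\in E(G\backslash 0)$, giving exactly the edges of $G\backslash 0$. This is precisely the edge set of $\widetilde{G}\backslash\{s,0\}$, and the vertex order is preserved, so the contracted flow polytope equals $\mathcal{F}_{\widetilde{G}\backslash\{s,0\}}(d_1^F,\ldots,d_n^F,-\#E(G\backslash F))$, which closes the chain of integral equivalences. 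The only point requiring genuine care is the bookkeeping: checking that the ``unique incoming edge, nonpositive netflow'' hypothesis of Lemma \ref{contraction} survives each successive contraction (it does, for the reason above) and that the column netflow telescopes to $d_j^F$. Combined with Proposition \ref{tapisflowpolytope}, this identification also shows that $\Sol(F)$ is exactly the set of projections onto the coordinates $\{a_{i,j}\}$ of the integer flows of $\widetilde{G}\backslash\{s,0\}$ with netflow $(d_1^F,\ldots,d_n^F,-\#E(G\backslash F))$.
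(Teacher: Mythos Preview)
Your proof is correct and follows the same approach as the paper: contract all edges $a_{i,j}$ with $i<n$ using the ``unique incoming edge'' case of Lemma \ref{contraction}, then identify the resulting graph with $\widetilde{G}\backslash\{s,0\}$ and read off the telescoped netflows. You are simply more careful than the paper in explicitly verifying that each target vertex has a unique incoming edge with nonpositive netflow and that this persists through successive contractions; the paper leaves these checks implicit.
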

	
	\begin{proof}
		First, note that in $\gr(G)$, the edges $\{a_{ij}\mid  i<n\}$ are each the only edges incoming to their target vertex. Contracting these edges via Lemma \ref{contraction}  identifies vertices $v_{ij}$ and $v_{i'j}$. Label the representative vertices $v_{jj}$ by $j$ for $j\in [n]$ and $v_{n+1,n+1}$ by $t$. The remaining edges are 
		\[z_{ij}:j\to i \mbox{ for } (j, i)\in E(G) \mbox{ and } a_{nj}: j\to t \mbox{ for } j\in [n],\]
		
		\noindent which are exactly the edges of $\widetilde{G}- \{s,0\}$. 
		\\
		
		\noindent Viewing the netflow vector $\bm{a}_G^F$ as the array 
		\[
		\begin{tabular}{lllll}
		$f_{n-1,1}-f_{n1}$  & $f_{n-2,1}-f_{n-1,1} $&$\hspace{3ex}\cdots $&$f_{11}-f_{21}$&$ \indegG(1)$\\
		$f_{n-1,2}-f_{n2}$  &$ \hspace{5ex} \cdots $&$f_{22}-f_{32} $&$\indegG(2)$& \\ 
		\hspace{7ex}$\vdots$&$\Ddots$&&&  \\
		$\indegG(n)$&&&&\\
		$-\#E(G\backslash F),$&&&&
		\end{tabular}
		\]
		Lemma \ref{contraction} implies the entries of the netflow vector after contracting are given by reading the sums of each row from top to bottom.
		 
	\end{proof}

	Recall from Definition \ref{ldsequencesfromF} that $\LD(G,F)$ is the multiset of left-degree sequences in $\inseq(\mathcal{T}(G))$ occurring as the first column $\left (a_{n1}, \ldots, a_{n n}\right)$ of an array in $\Sol(F)$. We now arrive at the main result of this section.
	\begin{corollary}
		\label{flows}
		Let $G$ be a simple graph on $[0,n]$ and $F\subseteq E(G\backslash 0)$. If $\bm{b}_G^F$ is the vector \[\bm{b}_G^F=\left (\indegG(1)-\outdegF(1), \ldots , \, \indegG(n)-\outdegF(n), -\#E(G\backslash F)   \right )\]
		and $\psi$ is the map that takes a flow on $\widetilde{G}\backslash \{s,0\}$ to the tuple of its values on the edges $\{(j,t)\mid j\in[n] \}$, then
		$\LD(G,F)$ equals the (multiset) image under $\psi$ of all integral flows on $\widetilde{G}\backslash \{s,0\}$ with netflow vector $\bm{b}_G^F$.
		
		\noindent In particular, $\LD(G,F)$ is in bijection with integral flows on $\widetilde{G}\backslash \{s, 0\}$ with netflow $\bm{b}_G^F$.
		\end{corollary}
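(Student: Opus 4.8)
The plan is to read off Corollary \ref{flows} directly from the two results immediately preceding it, namely Proposition \ref{tapisflowpolytope} and Theorem \ref{isomorphism}, by tracking what the integral-equivalence maps do to the distinguished coordinates. First I would recall from Proposition \ref{tapisflowpolytope} that $\Sol(F)$ is the image of the set of integral flows on $\gr(G)$ with netflow $\bm{a}_G^F$ under the coordinate projection onto the edges $\{a_{i,\,j}\}$, and that the array of an integral flow has first column $(a_{n,\,1},\ldots,a_{n,\,n})$. By Definition \ref{ldsequencesfromF}, $\LD(G,F)$ is exactly the further projection onto the entries $a_{n,\,1},\ldots,a_{n,\,n}$, i.e. onto the flow values on the edges $a_{n,\,j}\colon v_{n,\,j}\to v_{n+1,\,n+1}$ for $j\in[n]$.

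Next I would invoke the proof of Theorem \ref{isomorphism}: the integral equivalence $\mathcal{F}_{\gr(G)}(\bm{a}_G^F)\equiv \mathcal{F}_{\widetilde{G}\backslash\{s,0\}}(\bm b_G^F)$ is realized by contracting, via Lemma \ref{contraction}, exactly the edges $\{a_{i,\,j}:\, i<n\}$, identifying all $v_{i,\,j}$ with $i\le n$ for fixed $j$ into a single vertex relabeled $j$, and relabeling $v_{n+1,\,n+1}$ as $t$. The crucial point is that the edges $a_{n,\,j}$ are \emph{not} contracted; under the identification they become precisely the edges $(j,t)$ of $\widetilde{G}\backslash\{s,0\}$, and since contracting an edge that is the unique edge into its target does not change the flow value on the surviving edges, the flow value on $a_{n,\,j}$ in $\gr(G)$ equals the flow value on $(j,t)$ in $\widetilde{G}\backslash\{s,0\}$. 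Hence the projection of $\Sol(F)$ onto $(a_{n,\,1},\ldots,a_{n,\,n})$ coincides, under this bijection of integral flows, with the projection $\psi$ recording the values on $\{(j,t):j\in[n]\}$. Composing the two bijections gives that $\LD(G,F)$ equals the multiset image under $\psi$ of all integral flows on $\widetilde{G}\backslash\{s,0\}$ with netflow $\bm b_G^F$.

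Finally, for the ``in particular'' claim I would observe that $\psi$ is actually injective on the set of such flows: once the values on $(j,t)$ for $j\in[n]$ are fixed, conservation at $t$ is automatic, and conservation at the vertices $n,n-1,\ldots,1$ (processed in decreasing order) successively determines the remaining edge values, because $\widetilde{G}\backslash\{s,0\}$ is acyclic with $t$ as unique sink; equivalently this is just the statement that the array $\Arr(L)$ of a leaf is recoverable from its first column together with the constraints $\Tri(F)$, which is transparent from the equations $Z_{i,\,j}$ in the proof of Lemma \ref{arraysgiveflowpolytopes}. I do not expect a genuine obstacle here; the only care needed is bookkeeping — making sure the edge $a_{n,\,j}$ is tracked correctly through the contraction and that the netflow vector $\bm b_G^F$ matches the one in Theorem \ref{isomorphism} (it does, since $d_j^F=\indeg_G(j)-\outdeg_F(j)$ and the last entry is $-\#E(G\backslash F)$, as noted just before the theorem).
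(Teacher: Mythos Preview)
Your derivation of the first assertion is correct and is exactly what the paper has in mind: trace the integral equivalence from Proposition~\ref{tapisflowpolytope} through Theorem~\ref{isomorphism}, noting that the uncontracted edges $a_{n,j}$ become the edges $(j,t)$, so that the ``first column'' projection on $\Sol(F)$ matches $\psi$ on integral flows.

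However, your argument for the ``in particular'' clause contains a genuine error. The map $\psi$ is \emph{not} injective on integral flows in general, and equivalently the full array in $\Sol(F)$ is \emph{not} recoverable from its first column. Concretely, take $G$ on $[0,3]$ with edges $(0,1),(1,2),(1,3),(2,3)$ and $F=\emptyset$, so $\bm b_G^\emptyset=(1,1,2,-4)$. There are two distinct integral flows on $\widetilde{G}\backslash\{s,0\}$ with $\psi(f)=(0,0,4)$, corresponding to $(f(1,2),f(1,3),f(2,3))=(0,1,1)$ and $(1,0,2)$. Your ``process vertices in decreasing order'' heuristic only pins down the \emph{sum} of the incoming flows at each vertex, not the individual values; acyclicity with a unique sink does not help here.

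The ``in particular'' statement nevertheless follows immediately from the first part, with no extra work: the multiset equality you have already established says that $\LD(G,F)$, counted with multiplicity, is exactly the multiset $\{\psi(f):f\text{ an integral flow}\}$, so its total size is the number of integral flows. Equivalently, the chain of bijections $\Sol(F)\leftrightarrow\{\text{integral flows on }\gr(G)\}\leftrightarrow\{\text{integral flows on }\widetilde{G}\backslash\{s,0\}\}$ (the first of which you can see from the equations $Z_{i,j}$, since the $z_{i,j}$ are determined once all $a_{i,j}$ are fixed) already gives $|\Sol(F)|$ equal to the number of integral flows, and $\LD(G,F)$ is by definition the multiset of first columns of $\Sol(F)$. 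This is how the corollary is used downstream (e.g.\ in Theorem~\ref{pstheoremaltproof}), and no injectivity of $\psi$ is needed or claimed.
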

	
	We note that the  preceding result   implies a formula for the Ehrhart polynomial of  flow polytopes of graphs with special source and sink vertices. In particular, a special case of  Theorem \ref{pstheorem} follows readily.
	 
	\begin{theorem}
		\label{pstheoremaltproof}
		Let $G$ be a simple graph on $[0,n]$ and let $d_i=\indeg_G(i)$. Then, the normalized volume of the flow polytope of $\widetilde{G}$ is  
		\begin{align} 
			\label{eq:vol} 
			\mathrm{Vol} \,\,\mathcal{F}_{\widetilde{G}} = K_{\widetilde{G}\backslash \{s,0\}} \left (d_1,\ldots, d_n, \, -\# E(G) \right ). 
		\end{align}
		
		Moreover, the Ehrhart polynomial of $\mathcal{F}_{\widetilde{G}}$ is \
		\begin{align} 
			\label{eq:ehr} 
			\operatorname {Ehr}(\mathcal{F}_{\widetilde{G}},t) = (-1)^{d}\sum_{i=0}^{d} (-1)^i  \left (\mbox{\hspace{1.5ex}}\sum_{\mathclap{\substack{F\subseteq E(G\backslash 0)\\ \#F=d-i}}}  K_{\widetilde{G}\backslash \{s,0\}} \left (\bm{b}_G^F \right ) \right )  {t+i \choose i}, 
		\end{align} where $\bm{b}_G^F= (\indegG(1)-\outdegF(1), \ldots , \, \indegG(n)-\outdegF(n), -\#E(G\backslash F))$ and $d=\#E(\widetilde{G})-\#V(\widetilde{G})+1$ is the dimension of $\mathcal{F}_{\widetilde{G}}$.
	\end{theorem}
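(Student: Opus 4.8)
The plan is to deduce both formulas from Corollary~\ref{flows} together with the inclusion–exclusion structure of the multisets $\LD(G,F)$. First I would address the volume formula \eqref{eq:vol}. By the Corollary right after Proposition~\ref{subdivisionlemma}, $\mathrm{Vol}\,\mathcal{F}_{\widetilde G}$ equals the number of full-dimensional leaves of any reduction tree of $G$; by Theorem~\ref{theoremA} this count is the number of full-dimensional sequences in $\LD(G)$, and these are exactly the first columns coming from $\Sol_G(\emptyset)$ (since the edge-label set $F$ attached to a leaf, as explained in the combinatorial description of $\LD(G,F)$, is empty precisely for the full-dimensional leaves). Thus $\mathrm{Vol}\,\mathcal{F}_{\widetilde G}=\#\LD(G,\emptyset)$. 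Now apply Corollary~\ref{flows} with $F=\emptyset$: here $\outdeg_F(j)=0$ for all $j$ and $\#E(G\backslash F)=\#E(G)$, so $\bm b_G^\emptyset=(d_1,\dots,d_n,-\#E(G))$, and $\#\LD(G,\emptyset)$ equals the number of integral flows on $\widetilde G\backslash\{s,0\}$ with this netflow, which is by definition $K_{\widetilde G\backslash\{s,0\}}(d_1,\dots,d_n,-\#E(G))$. This gives \eqref{eq:vol}.

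For the Ehrhart formula \eqref{eq:ehr}, the strategy is to compute the Ehrhart polynomial from the dissection of $\mathcal{F}_{\widetilde G}$ furnished by the leaves of $\mathcal{T}(G)$. Each leaf $L$ with $\#E(L)=\#E(G)-k$ contributes a unimodular simplex of dimension $d-k$ (where $d=\#E(G)$ is the dimension of $\mathcal{F}_{\widetilde G}$), and by Theorem~\ref{multisetequality} the leaves with a fixed number of missing edges $k$ are in bijection with $\bigcup_{\#F=k}\Sol_G(F)$, hence number $\sum_{\#F=k}\#\LD(G,F) = \sum_{\#F=k}K_{\widetilde G\backslash\{s,0\}}(\bm b_G^F)$ by Corollary~\ref{flows}. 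The Ehrhart polynomial of a polytope dissected into unimodular simplices is obtained by inclusion–exclusion: writing $\mathcal{F}_{\widetilde G}$ as the union of its top simplices, the lower-dimensional leaves record exactly the faces along which these simplices are glued, and the $h^*$-vector (equivalently the expansion in the basis $\binom{t+i}{i}$) is governed by the $f$-vector of this simplicial dissection. Concretely, since every $j$-dimensional simplex in the dissection has $h^*$-polynomial equal to $1$ and Ehrhart polynomial $\binom{t+j}{j}$, and since the dissection is a regular (indeed, iterated-hyperplane) subdivision so that the intersection poset is well-behaved, one gets $\operatorname{Ehr}(\mathcal{F}_{\widetilde G},t)=\sum_{k=0}^{d}(-1)^k c_k \binom{t+d-k}{d-k}$ where $c_k$ is the number of $(d-k)$-dimensional leaves counted with the sign coming from the nerve of the cover. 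Reindexing $i=d-k$ and substituting $c_{d-i}=\sum_{\#F=d-i}K_{\widetilde G\backslash\{s,0\}}(\bm b_G^F)$ yields \eqref{eq:ehr}, with the global sign $(-1)^d$ accounting for the reindexing.

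The main obstacle I anticipate is making the inclusion–exclusion step fully rigorous: one must verify that in the dissection produced by $\mathcal{T}(G)$, a lower-dimensional leaf of codimension $k$ is glued to the higher-dimensional pieces in exactly the combinatorial pattern that makes the alternating sum over leaves compute the Ehrhart polynomial (rather than overcounting or undercounting faces shared by more than two top simplices). The cleanest route is to exploit that each reduction step is literally a hyperplane cut (Proposition~\ref{subdivisionlemma}): cutting a polytope $P=Q_1\cup Q_2$ along a hyperplane meeting the interior gives $\operatorname{Ehr}(P,t)=\operatorname{Ehr}(Q_1,t)+\operatorname{Ehr}(Q_2,t)-\operatorname{Ehr}(Q_3,t)$, and iterating this identity down the reduction tree expresses $\operatorname{Ehr}(\mathcal{F}_{\widetilde G},t)$ as the alternating sum $\sum_{L\text{ leaf}}(-1)^{\operatorname{codim}L}\operatorname{Ehr}(\mathcal{F}_{\widetilde L},t)$ over all leaves, with $Q_3$-type intersections carrying the minus signs; this telescoping is forced by the recursion and sidesteps any global nerve computation. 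Since each $\mathcal{F}_{\widetilde L}$ is a unimodular simplex of dimension $d-\operatorname{codim}L$ with $\operatorname{Ehr}(\mathcal{F}_{\widetilde L},t)=\binom{t+d-\operatorname{codim}L}{d-\operatorname{codim}L}$, grouping leaves by codimension and invoking Theorem~\ref{multisetequality} and Corollary~\ref{flows} gives the claimed formula. A minor point to check along the way is that the sign of a leaf in this telescoping equals $(-1)^{\operatorname{codim}}$ uniformly, which follows because every internal node contributes its $Q_3$-child with a single extra minus sign and the codimension increases by exactly one at that child.
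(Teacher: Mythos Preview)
Your proof of the volume formula \eqref{eq:vol} is essentially identical to the paper's.

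For the Ehrhart formula \eqref{eq:ehr}, your approach is correct but genuinely different from the paper's. The paper observes that the \emph{open} polytope $\mathcal{F}_{\widetilde G}^{\circ}$ is the disjoint union of the relatively open simplices $\sigma^{\circ}$ indexed by all leaves of $\mathcal{T}(G)$ (this follows from the fact that each reduction is a hyperplane cut through the interior, so $P^{\circ}=Q_1^{\circ}\sqcup Q_2^{\circ}\sqcup Q_3^{\circ}$ at every step). Using $\operatorname{Ehr}(\Delta^{\circ},t)=\binom{t-1}{k}$ for a unimodular $k$-simplex gives $\operatorname{Ehr}(\mathcal{F}_{\widetilde G}^{\circ},t)=\sum_i f_i\binom{t-1}{i}$, and then Ehrhart--Macdonald reciprocity converts this to \eqref{eq:ehr}. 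Your route instead iterates the closed inclusion--exclusion identity $\operatorname{Ehr}(P,t)=\operatorname{Ehr}(Q_1,t)+\operatorname{Ehr}(Q_2,t)-\operatorname{Ehr}(Q_3,t)$ down the reduction tree, obtaining $\operatorname{Ehr}(\mathcal{F}_{\widetilde G},t)=\sum_L(-1)^{\operatorname{codim}L}\binom{t+\dim L}{\dim L}$ directly. The sign check you give (each $Q_3$-branch drops one edge, hence increments codimension by one and flips the sign) is exactly what makes the induction go through. Your argument has the mild advantage of bypassing reciprocity entirely; the paper's argument has the mild advantage of explaining structurally why the open interiors of \emph{all} leaf-simplices partition $\mathcal{F}_{\widetilde G}^{\circ}$.

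One small correction: you write ``where $d=\#E(G)$ is the dimension of $\mathcal{F}_{\widetilde G}$.'' In fact $d=\#E(\widetilde G)-\#V(\widetilde G)+1=\#E(G)+n$, not $\#E(G)$. This does not affect your argument, since you only use that a leaf with $\#E(L)=\#E(G)-k$ gives a simplex of dimension $d-k$, which remains true; but the parenthetical value is wrong and should be fixed.
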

	
	\begin{proof}
		From the dissection of $\mathcal{F}_{\widetilde{G}}$ obtained  via the reduction tree $\mathcal{T}(G)$, it follows that \\ $\mathrm{Vol} \,\,\mathcal{F}_{\widetilde{G}} $ is the number of full-dimensional left-degree sequences. By Corollary \ref{flows}, these are in bijection with the integer points in the flow polytope $\mathcal{F}_{\widetilde{G}\backslash \{s,0\}}\left (d_1,\ldots, d_{n}, -\#E(G) \right )$, proving \eqref{eq:vol}.
		
		To prove \eqref{eq:ehr} note that    $\mathcal{F}_{\widetilde{G}}^\circ= \bigsqcup _{\sigma^\circ \in D_{\mathcal{T}(G)}} \sigma^\circ$, where $D_{\mathcal{T}(G)}$ is the set of open simplices corresponding to the leaves of the reduction tree $\mathcal{T}(G)$.  Then,   
		\[\operatorname {Ehr}(\mathcal{F}_{\widetilde{G}}^\circ, t)=\sum _{\sigma^\circ \in D_{\mathcal{T}(G)}} \operatorname {Ehr}({\sigma^\circ}, t).\]
		Since all simplices in $D_{\mathcal{T}(G)}$ are unimodular, it follows that for a $k$-dimensional simplex $\sigma^\circ \in D_{\mathcal{T}(G)}$, 
		\[\operatorname {Ehr}({\sigma^\circ}, t)=\operatorname {Ehr}({\Delta^\circ}, t),\]
		where $\Delta$ is the standard $k$-simplex. By \cite[Theorem 2.2]{br}, $\operatorname {Ehr}({\Delta^\circ}, t)= {t-1 \choose k}.$
		Thus,  
		\[\operatorname {Ehr}(\mathcal{F}_{\widetilde{G}}^\circ, t)=\sum_{i=0}^{\infty} f_i  {t-1 \choose i},\] where $f_i$ is the number of $i$-simplices in $D_{\mathcal{T}(G)}$. For $i\in [0,d]$, 
		\[f_i=\sum_{\mathclap{\substack{F\subseteq E(G\backslash 0)\\ \#F=d-i}}} \#LD(G, F).\]  
		Corollary \ref{flows} then implies   
		\[f_i= \sum_{\mathclap{\substack{F\subseteq E(G\backslash 0)\\ \#F=d-i}}}  K_{\widetilde{G}\backslash \{s,0\}} \left (\bm{b}_G^F \right ) \text{ for } i \in [0,d].\]
		Therefore, 
		\[ \operatorname {Ehr}(\mathcal{F}_{\widetilde{G}}^\circ, t)=\sum_{i=0}^{d} \left (\mbox{\hspace{1.5ex}}\sum_{\mathclap{\substack{F\subseteq E(G\backslash 0)\\ \#F=d-i}}}  K_{\widetilde{G}\backslash \{s,0\}} \left (\bm{b}_G^F \right )\right )  {t-1 \choose i}.\]

		\noindent From the Ehrhart-Macdonald reciprocity \cite[Theorem 4.1]{br} \[\operatorname {Ehr}(\mathcal{F}_{\widetilde{G}},t)=(-1)^{d} \operatorname {Ehr}(\mathcal{F}_{\widetilde{G}}^{\circ},-t),\]
		it follows that 
		\begin{align*}
			\operatorname {Ehr}(\mathcal{F}_{\widetilde{G}},t)&=(-1)^{d}\sum_{i=0}^{d} \left (\mbox{\hspace{1.5ex}}\sum_{\mathclap{\substack{F\subseteq E(G\backslash 0)\\ \#F=d-i}}}  K_{\widetilde{G}\backslash \{s,0\}} \left (\bm{b}_G^F \right )\right )  {-t-1 \choose i}\\
			&=(-1)^{d}\sum_{i=0}^{d} (-1)^i  \left (\mbox{\hspace{1.5ex}}\sum_{\mathclap{\substack{F\subseteq E(G\backslash 0)\\ \#F=d-i}}}  K_{\widetilde{G}\backslash \{s,0\}} \left (\bm{b}_G^F \right ) \right )  {t+i \choose i}.
		\end{align*}
	\end{proof}

\section{Newton polytopes of left-degree polynomials}
\label{sec4}

In this section, we study the Newton polytopes of polynomials $L_G(\bm{t})$ built from left-degree sequences (see Definition \ref{ldpolynomialdefinition}). We first show that each of these polynomials have SNP (Definition \ref{snp}). Then, we investigate the Newton polytopes of their homogeneous components and certain homogeneous subcomponents. We prove that these Newton polytopes are generalized permutahedra. Our main results can be summarized as:

\begin{letteredtheorem}
	\label{theoremB}
	Let $G$ be a graph on $[0,n]$. Then the left-degree polynomial $L_G(\bm{t})$ has SNP, and the Newton polytope of each homogeneous component $L_G^k(\bm{t})$ of $L_G(\bm{t})$ of degree $\#E(G)-k$  is a generalized permutahedron.
\end{letteredtheorem}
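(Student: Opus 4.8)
The plan is to deduce Theorem~\ref{theoremB} from the structural description of left-degree sequences obtained in Section~\ref{sec3}, in particular from Corollary~\ref{flows}. First I would introduce the left-degree polynomial $L_G(\bm{t})$: it should be the generating function $\sum_{\bm{s}\in\LD(G)} \bm{t}^{\bm{s}}$ (with multiplicity), and I would record that by Theorem~\ref{theoremA} this is well-defined, independent of the reduction tree. The grading by $\#E(G)-k$ corresponds to grouping the left-degree sequences $\bm{s}$ by their coordinate sum $|\bm{s}| = \sum_i s_i$; I would first check that a sequence in $\LD(G,F)$ has coordinate sum $\#E(G) - \#F$, which follows because the first column of an array in $\Sol(F)$ sums to $\#E(G[0,n]) - f_{n,1} - \cdots$ — more cleanly, by Corollary~\ref{flows} the image under $\psi$ of a flow with netflow $\bm{b}_G^F$ has coordinate sum equal to $\#E(G\backslash F) = \#E(G) - \#F$. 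Hence the homogeneous component $L_G^k(\bm{t})$ is exactly $\sum_{F:\#F=k}\sum_{\bm{s}\in\LD(G,F)}\bm{t}^{\bm{s}}$, and its Newton polytope is the convex hull of $\bigcup_{\#F=k}\LD(G,F)$.

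The key step is then to identify $\conv\!\big(\bigcup_{\#F=k}\LD(G,F)\big)$ with a generalized permutahedron. By Corollary~\ref{flows}, $\LD(G,F)$ is the set of lattice points of the flow polytope $\mathcal{F}_{\widetilde G\backslash\{s,0\}}(\bm{b}_G^F)$ projected onto the sink edges $\{(j,t):j\in[n]\}$; the projection of a flow polytope onto a coordinate subset is again a polytope whose lattice points are the projections (here the projection is onto the "last layer" of edges, all of which are incident to $t$, so lattice points do map to lattice points). So $\LD(G,F)$ is the lattice-point set of a polytope $P_F\subseteq\R^n$. The natural claim is that each $P_F$ is itself a generalized permutahedron and that the union $\bigcup_{\#F=k}P_F$ over all $F$ of fixed size is also (the lattice points of) a generalized permutahedron — recalling Postnikov's characterization that a polytope is a generalized permutahedron iff it can be written with a submodular support function, equivalently iff every edge is parallel to some $e_i - e_j$. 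I would verify the edge-direction condition directly: a flow polytope $\mathcal{F}_H(\bm{a})$ has all edges in directions $e_i-e_j$ (difference of two routes differing by a single cycle), and this property is preserved under the coordinate projection onto edges into $t$ because distinct $t$-edges correspond to distinct vertices, so the projection sends $e_i-e_j$ to $e_{i'}-e_{j'}$ or to $0$. For the union over $F$ with $\#F=k$: one shows the $P_F$ fit together into a single generalized permutahedron, most likely by exhibiting its vertices/edges explicitly from the triangular-array description, or by showing the union is the set of lattice points of a Minkowski sum of the relevant hypersimplices/matroid polytopes. Establishing polytopal support of the full $L_G(\bm{t})$ then means showing that \emph{every} lattice point of $\operatorname{Newton}(L_G)$ actually occurs with nonzero coefficient; this follows once each $\LD(G,F)$ is shown to be \emph{all} lattice points of $P_F$ (not a proper subset) — which is exactly the content of Corollary~\ref{flows} combined with the fact that a flow polytope has no "missing" lattice points relative to its projection, i.e. $\psi$ is surjective onto the lattice points of $P_F$.

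The main obstacle I anticipate is the union step: proving that $\bigcup_{\#F=k}\LD(G,F)$ is the lattice-point set of a \emph{single} generalized permutahedron, rather than just a union of several such polytopes whose convex hull might a priori fail the edge-direction test or might contain extra lattice points. The cleanest route is probably to go back to the triangular constraint arrays $\Tri_G(F)$: fixing the coordinate sum $k = \#F$ and letting $F$ vary should amount to relaxing certain of the "$\leq$" inequalities in $\Tri_G(\emptyset)$ by integer shifts in a controlled, "laminar" way, and one wants to argue that the resulting region — the projection to the first column of $\bigcup_{\#F=k}\poly(\Tri_G(F))$ — is cut out by submodular inequalities. Concretely I would try to show that membership of a sequence $(s_1,\dots,s_n)$ in some $\LD(G,F)$ with $\#F=k$ is governed by inequalities of the form $\sum_{i\in S}s_i \le \mu(S)$ for a submodular function $\mu$ determined by $G$ and by $k$, by tracking how the $f_{i,j}$ enter the array constraints; Postnikov's submodularity criterion then finishes it. I would also need the standard facts (citing \cite{genperms}) that a polytope with vertices in $\Z^n$ all of whose edges are parallel to $e_i-e_j$ is a generalized permutahedron and that such polytopes are exactly those with submodular support functions, to close the loop between the two possible proof strategies.
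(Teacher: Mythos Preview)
Your overall outline is sound --- decomposing by $F$, identifying $\LD(G,F)$ via Corollary~\ref{flows} as the lattice points of a projected flow polytope, and singling out the union over $\#F=k$ as the main obstacle --- but there is a genuine gap in the argument as written. The claim that ``a flow polytope $\mathcal{F}_H(\bm{a})$ has all edges in directions $e_i-e_j$'' is false: an edge of a flow polytope corresponds to a circulation around a cycle of $H$, so its direction is the signed indicator vector of that cycle, not a single root. One can still rescue the conclusion that each $P_F=\conv(\LD(G,F))$ is a generalized permutahedron, but not by the edge-direction test; the paper instead exhibits an explicit submodular description $P_F=P_n^z\{z_I^F\}$, where $z_I^F$ is the minimum total flow on the edges $\{(i,t):i\in I\}$ over all flows with netflow $\bm{b}_G^F$, and proves equality directly from a max-flow min-cut argument (Lemma~\ref{feasibleflowproblem}, Theorem~\ref{genpermtheoremzI}).

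The paper also handles the union over $\#F=k$ in a way quite different from your proposed submodularity tracking. Rather than analyzing $\bigcup_{\#F=k}P_F$, it constructs a single auxiliary graph $G^{(k)}$ (and $\Gaug$ for the full $L_G$) by adding capacity-$1$ edges $y_{i,j}$ for each $(j,i)\in E(G\backslash 0)$, all routed into a new vertex carrying netflow $-k$; an integral capacitated flow on $G^{(k)}$ then automatically selects some $F$ with $\#F=k$ via the set of $y$-edges carrying flow $1$. This packages the whole union into one capacitated flow polytope, after which the same max-flow min-cut lemma identifies the projection with $P_n^z\{z_I^{(k)}\}$ and simultaneously delivers polytopal support (every lattice point of the projection lifts to an integral flow, because capacitated flow polytopes with integral data have integral vertices). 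The missing idea in your proposal is precisely this auxiliary-graph construction; without it, your approach to the union step stays at the level of a hope that a submodular $\mu$ can be read off from the $f_{i,j}$, with no concrete mechanism supplied.
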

	Theorems \ref{newtonofldpolynomial}, \ref{cap} and   \ref{ldhomogeneouspiecesnewton} imply Theorem \ref{theoremB}, and contain more detail regarding the elements of Theorem \ref{theoremB}. Recall that for a polynomial $f=\displaystyle \sum_{\alpha\in \mathbb{Z}_{\geq 0}^n}{c_{\alpha}}\bm{t}^{\alpha}$, the \newword{Newton polytope} is 
	\[\mathrm{Newton}(f)=\conv\left( \left \{ \alpha\mid  c_{\alpha}\neq 0 \right \} \right).\] 	

\begin{definition} \label{snp}
	We say a polynomial $f$ has \newword{saturated Newton polytope} (SNP) if $c_{\alpha}\neq 0$ whenever $\alpha\in \mathrm{Newton}(f)$; that is, if the integer points of $\mathrm{Newton}(f)$ are exactly the exponents of monomials appearing in $f$ with nonzero coefficients.
\end{definition}

The question of when a polynomial has SNP is a very natural one, and has recently been investigated for various polynomials from algebra and combinatorics by Monical, Tokcan and Yong in \cite{MTY}.

Recall from Definition \ref{ldsequencesfromF} that for a simple graph $G$ and a subset  $F\subseteq E(G\backslash 0)$, $\LD(G,F)$ denotes the submultiset of $\LD(G)$ consisting of sequences occurring as the first column of an array in $\Sol(F)$. Just as in Section \ref{sec3}, for the remainder of this section we add the simplifying assumption that $G$ has no multiple edges. All of the results of this section are also valid for graphs with multiple edges, with similar proof and notation modifications to those described in Section \ref{sec6}.

\begin{definition}
	\label{ldpolynomialdefinition}
	Let $G$ be a graph on $[0,n]$. For $\alpha\in\LD(G)$, let $\mathrm{codim}(\alpha)=\#E(G)-\sum_{i=1}^{n}{\alpha_i}$. Define the \newword{left-degree polynomial} $L_G(\bm{t})$   in variables $\bm{t}=(t_1,t_2,\ldots, t_n )$ by
	\begin{align*}
		L_G(\bm{t})=\sum_{\alpha \in \LD(G)}(-1)^{\mathrm{codim}(\alpha)}\bm{t}^{\alpha}.
	\end{align*}
	Similarly, for $F\subseteq E(G\backslash 0)$, define $L_{G,F}(\bm{t})$ by
	\begin{align*}
	L_{G,F}(\bm{t})=\sum_{\alpha \in \LD(G,F)}(-1)^{\mathrm{codim}(\alpha)}\bm{t}^{\alpha}=(-1)^{\#F}\sum_{\alpha \in \LD(G,F)}\bm{t}^{\alpha}.
	\end{align*}
\end{definition}
	Note that the $(-1)^{\mathrm{codim}(\alpha)}$ in Definition \ref{ldpolynomialdefinition} has no effect on the Newton polytope. It is inherited from the definition of right-degree polynomials utilized in \cite{pipe1}, which was designed to agree with Grothendieck polynomials.
	
	Restating Theorem \ref{multisetequality}  in terms of left-degree sequences gives the multiset union decomposition
\[\LD(G)=\bigcup_{F\subseteq E(G\backslash 0)}\LD(G,F). \]
Relative to Newton polytopes, this implies
\begin{align}
	\mathrm{Newton}(L_G(\bm{t})) = \conv\left (\bigcup_{F\subseteq E(G\backslash 0)}\mathrm{Newton}\left (L_{G,F}(\bm{t})\right ) \right ).
\end{align}

We first study the polytope $\mathrm{Newton}(L_G(\bm{t}))$, and then the component pieces $\mathrm{Newton}\left (L_{G,F}(\bm{t})\right )$. To start, we define a new constraint array.

\begin{definition}
	Let $G$ be a simple graph on $[0,n]$. Proceed as follows:
	\begin{itemize}
		\item Start with the triangular constraint array $\Tri_G(\emptyset)$ of $G$  as in Theorem \ref{arrayconstraints}.
		
		\item Replace the zero on the left of row $j$ by $y_{nj}+y_{n-1,j}+\cdots + y_{j+1,j}$ for $j\in [n-1]$, so the zero on the left in row $n$ is left unchanged. 
		
		\item For each $(i,j)$ with $n\geq i>j\geq 1$, replace all occurrences of $a_{ij}$ in the array by $a_{ij}+\sum_{k=j+1}^i y_{kj}$.
		
		\item For every $(j,i)\notin E(G\backslash 0)$, set $y_{ij}=0$ throughout.
	\end{itemize}
	We refer to this array as the \newword{augmented constraint array of} $G$ and view it as having variables $a_{ij}$ and $y_{ij}$ subject to the additional constraints that for all $1\leq j<i\leq n$,
	\[0\leq y_{ij}\leq 1.\]
\end{definition}

\begin{example}
	If $G$ is the graph on vertex set $[0,4]$ with \newline $E(G)=\{(0,1),(0,2),(1,2),(2,3),(2,4),(3,4) \}$, then we start with the constraints
	\begin{align*}
	0&\leq a_{41}=a_{31}= a_{21}\leq a_{11}=1\\
	0&\leq a_{42}\leq a_{32}\leq a_{22}=3-a_{21}\\
	0&\leq a_{43}\leq a_{33}= 4-a_{31}-a_{32}\\
	0&\leq a_{44}=6-a_{41}-a_{42}-a_{43}
	\end{align*}
	After performing the modifications, we arrive at
	\begin{align*}
	y_{21}&\leq a_{41}+y_{21}=a_{31}+y_{21}= a_{21}+y_{21}\leq a_{11}=1\\
	y_{42}+y_{32}&\leq a_{42}+y_{42}+y_{32}\leq a_{32}+y_{32}\leq a_{22}=3-a_{21}-y_{21}\\
	y_{43}&\leq a_{43}+y_{43}\leq a_{33}= 4-a_{31}-y_{21}-a_{32}-y_{32}\\
	0&\leq a_{44}=6-a_{41}-y_{21}-a_{42}-y_{42}-y_{32}-a_{43}-y_{43}
	\end{align*}
\end{example}

	Analogous to Lemma \ref{arraysgiveflowpolytopes}, we now work toward showing that $\poly(A)$ is integrally equivalent to a flow polytope. We use the technique with which we constructed $\gr(G)$ in Lemma \ref{arraysgiveflowpolytopes} togetherwith the proof idea of Theorem \ref{isomorphism}. Begin with the case where $G$ is a complete graph. By introducing slack variables $z_{ij}$ for the inequalities in the augmented constraint array (not $0\leq y_{ij}\leq 1$), we get equations $Y_{ij}$ given by
	\begin{align*}
	Y_{ij}:\begin{cases}
	a_{ij}+y_{ij}+z_{ij}=a_{i-1,j} &\mbox{ if } i>j\\
	a_{ij}=\#E(G[0,1]) &\mbox{ if } i=j=1\\
	\displaystyle\sum_{k=1}^{i}{a_{ik}}+ \sum_{m=2}^{i}{\sum_{k=1}^{m-1}{y_{mk}}} = \#E(G[0,i]) &\mbox{ if } i=j>1
	\end{cases}
	\end{align*}
	
	Applying the exact same transformation used in the proof of Lemma \ref{arraysgiveflowpolytopes}, we get equivalent equations $Z_{ij}$ given by
	\begin{align*}
	Z_{ij}:
	\begin{cases}
	a_{ij}+y_{ij}+z_{ij}=a_{i-1,j} &\mbox{ if } i>j\\
	a_{ij}=\indegG(1) &\mbox{ if } i=j=1\\
	a_{ij} = \indegG(i)+ \displaystyle \sum\limits_{k=1}^{i-1}{z_{ik}} &\mbox{ if } i=j>1
	\end{cases}
	\end{align*}
	
	To move from the complete graph to   any simple graph, just set $y_{ij}=0$ and $z_{ij}=0$ whenever $(j,i)\notin E(G)$. We can realize the solutions to the $Z_{ij}$ as points in a flow polytope of some graph. However, to account for the additional restrictions $0\leq y_{ij}\leq 1$, we view it as a \newword{capacitated flow polytope}. This is for convenience and is not mathematically significant since any capacitated flow polytope is integrally equivalent to an uncapacitated flow polytope \cite[Lemma 1]{cap}.
	
	\begin{definition}
		\label{augmentedarraygraph}
		Define the \newword{augmented constraint graph} $\agr(G)$ to have vertex set $\{v_{ij}\mid  1\leq j\leq i\leq n \}\cup \{v_{n+1,n+1}\}$ 
		with  the ordering $v_{11}<v_{21}<\cdots < v_{n1}<v_{22}<\cdots<v_{nn}<v_{n+1,n+1}$ and edge set  
		$E_a\cup E_z \cup E_y$
		\noindent labeled by the variables $a_{ij}$, $z_{ij}$, and $y_{ij}$ respectively, where
		\begin{align*}
		E_a&\mbox{ consists of edges }a_{ij}:v_{ij}\to v_{i+1,j}\mbox{ for } 1\leq j \leq i \leq n,  \\
		E_z&\mbox{ consists of edges }z_{ij}:v_{ij}\to v_{ii}\mbox{ for }(j,i)\in E(G\backslash 0) ,\\
		E_y&\mbox{ consists of edges }y_{ij}:v_{ij} \to v_{n+1,n+1}\mbox{ for } (j,i)\in E(G\backslash 0),
		\end{align*}
		and we take indices $(n+1,j)$ to refer to $(n+1,n+1)$. 
		Define a netflow vector $\bm{a}_G^{\mathrm{aug}}$ by reading each row of the array
		\[
		\begin{tabular}{lllll}
		$0$  & $0 $&$\hspace{3ex}\cdots $&$0$&$ \indegG(1)$\\
		$0$  &$ 0\hspace{5ex} \cdots $&$0 $&$\indegG(2)$& \\ 
		\hspace{7ex}$\vdots$&$\Ddots$&&&  \\
		$\indegG(n)$&&&&\\
		$-\#E(G)$&&&&
		\end{tabular}
		\]
		from right to left and reading the rows from top to bottom.
	\end{definition}

	Denote by $\mathcal{F}_{\agr(G)}^c\left (\bm{a}_G^{\mathrm{aug}}\right )$ the capacitated flow polytope of the graph $\agr(G)$ with netflow $\bm{a}_G^{\mathrm{aug}}$ and with the capacity constraints $0\leq y_{ij}\leq 1$ for all $1\leq j<i\leq n$. By construction, the points in $\mathcal{F}_{\agr(G)}^c\left (\bm{a}_G^{\mathrm{aug}}\right )$ are exactly the solutions to the augmented constraint array of $G$. 
	\begin{definition}
		\label{moddingofG}
		Similar to Theorem \ref{isomorphism}, contracting the edges $\{a_{ij}\mid  1\leq j\leq i<n \}$ of $\agr(G)$ and relabeling the representative vertices $v_{nj}$ by $j$ and $v_{n+1,n+1}$ by $t$, we obtain a graph called the \newword{augmented graph of $G$}. This graph is denoted $\Gaug$ and is defined on vertices $[n]\cup \{t\}$ with labeled  edges $E_a\cup E_z\cup E_y$ where
		\begin{align*}
		E_a&\mbox{ consists of edges }a_{nj}:j\to t\mbox{ for } j\in [n];  \\
		E_z&\mbox{ consists of edges }z_{ij}:j\to i\mbox{ for }(j,i)\in E(G\backslash 0);\\
		E_y&\mbox{ consists of edges }y_{ij}:j \to t\mbox{ for } (j, i)\in E(G\backslash 0).
		\end{align*}
	 
	\end{definition}
	
	\begin{example}
		For $G=K_4$, the graphs $\agr(G)$ and $\Gaug$ are shown below.
	\end{example}
	\begin{center}
		\includegraphics[scale=.8]{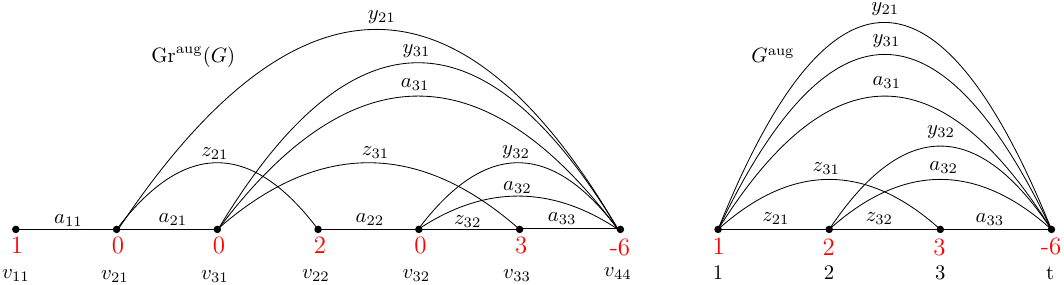}
	\end{center}

\bigskip
Before proceeding, recall the netflow vector 
\[
	\bm{b}_G^F=\left (\indegG(1)-\outdegF(1),\ldots, \indegG(n)-\outdegF(n),-\#E(G\backslash F) \right )
\]
	for any $F\subseteq E(G\backslash 0).$ Denote by $ \mathcal{F}^c_{\Gaug}\left (\bm{b}_G^\emptyset\right )$ the capacitated flow polytope of the graph ${\Gaug}$ with netflow $\bm{b}_G^\emptyset$ and the capacity constraints $0\leq y_{ij}\leq 1$ for all $1\leq j<i\leq n$. 
	
	\begin{theorem}
		\label{newtonofldpolynomial}
		Let $A$ denote the augmented constraint array of $G$ and $\poly(A)$ the polytope defined by the real valued solutions to $A$ with the additional constraints $0\leq y_{ij}\leq 1$ for all $i$ and $j$ with $1\leq j<i\leq n$. Then, the capacitated flow polytopes
		\[\poly(A),\quad \mathcal{F}_{\agr(G)}^c\left (\bm{a}_G^{\mathrm{aug}}\right ),\mbox{ and}\quad \mathcal{F}^c_{\Gaug}\left (\bm{b}_G^\emptyset\right ) \]
		are all integrally equivalent.
	\end{theorem}	
	
	\begin{proof}
		Follows immediately from the constructions of Definitions \ref{augmentedarraygraph} and \ref{moddingofG}.
%
	\end{proof}

	%
		
		\begin{theorem} \label{cap} For $G$ a graph on $[0,n]$, the Newton polytope of the left-degree polynomial $L_G(\bm{t})$ and the capacitated flow polytope $ \mathcal{F}^c_{\Gaug}\left (\bm{b}_G^\emptyset\right )$ satisfy
		\begin{align*}
			\mathrm{Newton}(L_G(\bm{t}))=\psi \left ( \mathcal{F}^c_{\Gaug}\left (\bm{b}_G^\emptyset\right ) \right ),
		\end{align*} where 	where $\psi$ is the projection that takes a flow on $\mathcal{F}^c_{\Gaug}\left (\bm{b}_G^\emptyset\right )$ to its values on the edges labeled $\{a_{nj}\mid j\in [n]\}$.
	\end{theorem}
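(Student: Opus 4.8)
The plan is to establish the two inclusions $\mathrm{Newton}(L_G(\bm{t}))\subseteq \psi(\mathcal{F}^c_{\Gaug}(\bm{b}_G^\emptyset))$ and $\psi(\mathcal{F}^c_{\Gaug}(\bm{b}_G^\emptyset))\subseteq\mathrm{Newton}(L_G(\bm{t}))$ separately, using the decomposition $\LD(G)=\bigcup_{F\subseteq E(G\backslash 0)}\LD(G,F)$ from Theorem \ref{multisetequality} as the organizing principle. The right-hand side $\psi(\mathcal{F}^c_{\Gaug}(\bm{b}_G^\emptyset))$ should be interpreted as the image of the capacitated flow polytope whose $y$-edge flows range over $[0,1]$; specializing a $y$-edge $y_{i,j}$ to its value $0$ or $1$ amounts, via the netflow bookkeeping, to recording whether $(j,i)\in F$. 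Concretely, for a flow $f$ on $\Gaug$ with a fixed choice of $y$-values forming the set $F=\{(j,i): f(y_{i,j})=1\}$, contracting/deleting the $y$-edges accordingly reduces $\mathcal{F}^c_{\Gaug}(\bm{b}_G^\emptyset)$ restricted to that face to the flow polytope $\mathcal{F}_{\widetilde{G}\backslash\{s,0\}}(\bm{b}_G^F)$ of Corollary \ref{flows}; the key point is that the last coordinate $-\#E(G\backslash F)=-\#E(G)+\sum_k f_{n,k}$ is exactly what the outgoing $y$-flow into $t$ subtracts.

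First I would prove the easy inclusion. Take a lattice point $\alpha\in\LD(G)$; by the decomposition it lies in $\LD(G,F)$ for some $F$, hence by Corollary \ref{flows} it is $\psi$ of an integral flow on $\widetilde{G}\backslash\{s,0\}$ with netflow $\bm{b}_G^F$. Lifting this to $\Gaug$ by setting $f(y_{i,j})=1$ exactly when $(j,i)\in F$ and $f(y_{i,j})=0$ otherwise produces a point of $\mathcal{F}^c_{\Gaug}(\bm{b}_G^\emptyset)$ mapping to $\alpha$ under $\psi$ (the capacity constraints $0\le y_{i,j}\le 1$ are met by construction, and conservation at $t$ holds because $\sum_{(j,i)\in F}1 = \#E(G)-\#E(G\backslash F)$). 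Thus every vertex of $\mathrm{Newton}(L_G)$, being a lattice point of $\LD(G)$, lies in $\psi(\mathcal{F}^c_{\Gaug}(\bm{b}_G^\emptyset))$; since the latter is convex, $\mathrm{Newton}(L_G)\subseteq\psi(\mathcal{F}^c_{\Gaug}(\bm{b}_G^\emptyset))$.

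For the reverse inclusion I would take an arbitrary real point $p$ of $\mathcal{F}^c_{\Gaug}(\bm{b}_G^\emptyset)$ and show $\psi(p)\in\mathrm{Newton}(L_G)$. The natural move is to decompose $\mathcal{F}^c_{\Gaug}(\bm{b}_G^\emptyset)$ along the $y$-edges: it is the union, over all $F\subseteq E(G\backslash 0)$, of the sub-slices where $f(y_{i,j})\in[0,1]$ is allowed but after ``rounding'' one lands in a face where the $y_{i,j}$ with $(j,i)\notin F$ are $0$; more cleanly, one observes that $\psi(\mathcal{F}^c_{\Gaug}(\bm{b}_G^\emptyset)) = \conv\bigl(\bigcup_F \psi_F(\mathcal{F}_{\widetilde{G}\backslash\{s,0\}}(\bm{b}_G^F))\bigr)$ because the $y$-flow only affects which ``$F$-slice'' one is in and the capacitated polytope's projection is the convex hull of the projections of its vertices, each of which has integral (hence $0/1$) $y$-coordinates and therefore sits in some $\mathcal{F}_{\widetilde{G}\backslash\{s,0\}}(\bm{b}_G^F)$. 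By Proposition \ref{tapisflowpolytope} and Theorem \ref{isomorphism} each such vertex projects into $\LD(G,F)\subseteq\LD(G)$. Hence $\psi(\mathcal{F}^c_{\Gaug}(\bm{b}_G^\emptyset))=\conv(\LD(G))=\mathrm{Newton}(L_G)$, which gives both the equality and, combined with the first paragraph's lattice-point argument, the statement that every lattice point of the right-hand side lies in $\LD(G)$, i.e. polytopal support.

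The main obstacle I anticipate is the bookkeeping in the reverse inclusion: one must verify carefully that projecting the capacitated polytope $\mathcal{F}^c_{\Gaug}(\bm{b}_G^\emptyset)$ onto the $a_{n,j}$-coordinates really does recover the convex hull of the union of the uncapacitated pieces $\mathcal{F}_{\widetilde{G}\backslash\{s,0\}}(\bm{b}_G^F)$ over all $F$, rather than something larger, and that no lattice point is lost or gained. This requires tracking how a vertex of the capacitated polytope forces its $y$-coordinates to $0/1$ (using \cite[Lemma 1]{cap} on capacitated flow polytopes being integrally equivalent to uncapacitated ones, so their vertices are lattice points), and then matching the resulting netflow vector against $\bm{b}_G^F$ via the identity $f_{n,j}=\outdegF(j)$ and $-\#E(G)+\sum_k f_{n,k}=-\#E(G\backslash F)$ already recorded before Theorem \ref{isomorphism}. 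Everything else is a routine translation through the integral equivalences of Definitions \ref{augmentedarraygraph} and \ref{moddingofG}.
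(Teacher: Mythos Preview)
Your argument for the polytope equality is correct and follows essentially the same outline as the paper: show $\LD(G)\subseteq\psi(\mathcal{F}^c_{\Gaug}(\bm{b}_G^\emptyset))$ by lifting each $\alpha\in\LD(G,F)$ to a flow with $y_{i,j}=1$ iff $(j,i)\in F$, then take convex hulls; for the reverse inclusion, use integrality of the capacitated flow polytope to reduce to integral flows with $0/1$ values on the $y$-edges, read off $F$, and land in $\LD(G,F)$ via Corollary~\ref{flows}.

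There is one genuine difference in how you and the paper run the reverse inclusion, and it matters for the extra claim you make about polytopal support. You argue globally: vertices of $\mathcal{F}^c_{\Gaug}(\bm{b}_G^\emptyset)$ are integral, hence project into some $\LD(G,F)$, hence the whole projection is contained in $\conv(\LD(G))$. That proves the equality of polytopes, but it does \emph{not} show that every lattice point of the projection lies in $\LD(G)$; an integer point of $\conv(\LD(G))$ is a priori only a convex combination of elements of $\LD(G)$. The paper instead starts from an arbitrary integer point $\alpha$ in the projection, fixes the $a_{n,j}$-coordinates to the integers $\alpha_j$, deletes those edges, and observes that the residual capacitated flow polytope has integral data and is nonempty, hence contains an integral point by \cite[Theorem~13.1]{schrijver}. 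This upgrades the lift of $\alpha$ to an integral flow and places $\alpha$ itself in some $\LD(G,F)$. So your sentence ``combined with the first paragraph's lattice-point argument, the statement that every lattice point of the right-hand side lies in $\LD(G)$'' is not justified by your vertex argument; if you also want polytopal support (as in Theorem~\ref{newtonofldpolynomial}), you need the paper's fix-then-lift step or an equivalent.
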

		
		\begin{proof}
		Let $\alpha\in \LD(G,F)$ for $F\subseteq E(G\backslash 0)$. Consider the set of integer flows on $\Gaug$ such that each edge $y_{ij}$ has flow 1 if $(j,i)\in F$ and zero otherwise. By the construction of $\Gaug$, these are in bijection with the integer flows on $\widetilde{G}\backslash \{s,0\}$ with netflow vector $\bm{b}_G^F$, which in turn are in bijection to $\LD(G,F)$ (Corollary \ref{flows}). Thus $\alpha$ is the projection of a capacitated flow on $\Gaug$ with netflow $\bm{b}_G^\emptyset$.
		
		Conversely, let $\alpha=(\alpha_1,\ldots, \alpha_n)\in \psi \left ( \mathcal{F}^c_{\Gaug}\left (\bm{b}_G^\emptyset\right ) \right ) $ be an integer point. Then, there exists some flow $f$ (not necessarily integral) on $\Gaug$ with netflow $\bm{b}_G^\emptyset$ having the integer values $\alpha_j$ on the $a$-edges $(j,t)$. If we remove these edges and modify the netflow vector accordingly, the new flow polytope we get is the (integrally capacitated) flow polytope of a graph with an integral netflow vector. Any such polytope has integral vertices \cite[Theorem 13.1]{schrijver}. Thus, we can choose $f$ to be an integral flow. 
	
		Since the edges labeled $y_{ij}$ are constrained between 0 and 1, $f$ takes value 0 or 1 on these edges. If we let $F=\{(j,i)\in E(G\backslash 0)\mid f \mbox{ takes value 1 on the edge labeled by } y_{ij} \}$, then $f$ induces a flow on $\widetilde{G}\backslash\{s,0\}$ with netflow vector $\bm{b}_G^F$, so $\alpha\in \LD(G,F)$.
	\end{proof}

	\begin{corollary}
		\label{cor:LGSNP}
		For any graph $G$ on $[0,n]$, $L_G(\bm{t})$ has SNP.
	\end{corollary}
	\begin{proof}
		The second half of the proof of Theorem \ref{cap} demonstrated that any integer point $\alpha\in \psi \left ( \mathcal{F}^c_{\Gaug}\left (\bm{b}_G^\emptyset\right ) \right )$ satisfied $\alpha\in \LD(G,F)$ for some $F$. Thus $\alpha\in \LD(G)$.
	\end{proof}

We now analyze the component polytopes $\mathrm{Newton}(L_{G,F}(\bm{t}))$ and show that they are generalized permutahedra. We first briefly recall the relevant definitions from \cite{genperms}.

A \newword{generalized permutahedron} is a deformation of the usual permutahedron obtained by parallel translation of the facets. Generalized permutahedra are parameterized by real numbers $\{z_I\}_{I\subseteq [n]}$ with $z_\emptyset=0$ and satisfying the supermodularity condition \[z_{I\cup J}+z_{I\cap J}\geq z_{I}+z_{J} \mbox{ for any } I,J\subseteq [n].\] 
For a choice of parameters $\{z_I\}_{I\subseteq [n]}$, the associated generalized permutahedron $P_n^z\left(\{z_I \}\right)$ is defined by
\[P_n^z\left(\{z_I \}\right)=\left \{ \bm{t}\in\R^n\mid  \sum_{i\in I}{t_i}\geq z_I \mbox{ for } I\neq [n], \mbox{ and } \sum_{i=1}^{n}{t_i}=z_{[n]}   \right \}. \]

There is a subclass of generalized permutahedra given by Minkowski sums of dilations of the faces of the standard $(n-1)$-simplex. For $I\subseteq [n]$, let $\Delta_I=\conv(\{e_i\mid  i\in I \})$, where $e_i$ is the $ith$ standard basis vector in $\R^n$ and $\Delta_{\emptyset}$ is the origin. Given a set $\{y_I\}_{I\subseteq [n]}$  of nonnegative real numbers with $y_\emptyset=0$, consider the polytope $\sum_{I\subseteq [n]}{y_I \Delta_I}$.

\begin{proposition}[\cite{genperms}, Proposition 6.3]
	\label{mobiusrelation}
	Given nonnegative real numbers $\{y_I\}_{I\subseteq [n]}$, set $z_I=\sum_{J\subseteq I}{y_J}$. Then 
	\[P_n^z\left(\{z_I \}\right) = \sum_{I\subseteq [n]}{y_I \Delta_I}. \]
\end{proposition}

\medskip

\noindent We now return to left-degree polynomials. Our goal is to show that 
	\[\mathrm{Newton}(L_{G,F}(\bm{t}))=P_n^z\left(\{z_I^F\}_{I\subseteq [n]}\right)\]
	for some parameters $\{z_I^F\}_{I \subseteq [n]}$. The proof relies on the following fact about flow polytopes, which readily follows from the  max-flow min-cut theorem.
	
	\begin{lemma}
		\label{feasibleflowproblem}
		Let $G$ be a graph on $[0,n]$ and $\bm{\alpha}=(\alpha_0,\ldots,\alpha_n)\in \mathbb{R}^{n+1}$. Then $\mathcal{F}_G(\bm{\alpha})$ is nonempty if and only if 		\begin{align}
			\label{existenceofaflow}
			\sum_{i=0}^{n}{\alpha_i}=0 \mbox{  and }
 \sum_{i\in S}{\alpha_i}\leq 0 \mbox{ for all } S\subseteq [0,n] \mbox{ with } \outdegG(S)=0.
		\end{align}
	\end{lemma}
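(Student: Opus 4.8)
\textbf{Proof proposal for Lemma \ref{feasibleflowproblem}.}

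The plan is to deduce this feasibility criterion from the classical max-flow min-cut theorem by reducing the existence of a flow with prescribed netflow vector $\bm{\alpha}$ to an ordinary maximum-flow problem with a single super-source and super-sink. First I would build an auxiliary graph $\widehat{G}$: adjoin a new source vertex $\sigma$ and a new sink vertex $\tau$, add an edge $\sigma \to i$ of capacity $\alpha_i$ for each $i$ with $\alpha_i > 0$, add an edge $i \to \tau$ of capacity $-\alpha_i$ for each $i$ with $\alpha_i < 0$, and keep every original edge of $G$ with capacity $+\infty$. A flow on $G$ with netflow vector $\bm{\alpha}$ exists if and only if $\widehat{G}$ admits a $\sigma$–$\tau$ flow that saturates every edge leaving $\sigma$ (equivalently every edge entering $\tau$), i.e. of value $\sum_{\alpha_i>0}\alpha_i =: P$; this equivalence is immediate from the conservation equations, and it already forces the necessity of $\sum_i \alpha_i = 0$ (so that $P = -\sum_{\alpha_i<0}\alpha_i$ as well).

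Next I would invoke max-flow min-cut: the maximum $\sigma$–$\tau$ flow value in $\widehat{G}$ equals the minimum capacity of a $\sigma$–$\tau$ cut. Since the original edges have infinite capacity, any finite cut must use only the new edges; a cut is determined by the set $T \subseteq [0,n]$ of original vertices placed on the sink side, subject to the constraint that no infinite-capacity edge crosses from the source side to the sink side, i.e. $G$ has no edge from $[0,n]\setminus T$ into $T$ — equivalently $\outdeg_G([0,n]\setminus T) = 0$. The capacity of such a cut is $\sum_{i \in T, \alpha_i > 0}\alpha_i + \sum_{i \notin T, \alpha_i < 0}(-\alpha_i)$. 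A short computation rewrites this, using $\sum_i \alpha_i = 0$, as $P + \sum_{i \in T}\alpha_i$ when we set $S := [0,n]\setminus T$ (so $\sum_{i\in T}\alpha_i = -\sum_{i\in S}\alpha_i$, giving cut capacity $P - \sum_{i\in S}\alpha_i$). Hence a saturating flow of value $P$ exists iff every such cut has capacity at least $P$, i.e. iff $\sum_{i\in S}\alpha_i \le 0$ for every $S$ with $\outdeg_G(S) = 0$, which is exactly condition (\ref{existenceofaflow}).

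The only genuinely delicate point is bookkeeping the cut capacities and the orientation convention correctly — in particular tracking which vertices sit on which side of the cut, translating "no infinite edge crosses the cut" into the condition $\outdeg_G(S)=0$ for $S$ the source-side set of original vertices, and confirming that taking $S=[0,n]$ (the whole vertex set, with $\outdeg_G([0,n])=0$) recovers precisely the equality $\sum_{i=0}^n \alpha_i = 0$ rather than a strict inequality. Everything else is a routine application of the standard theorem, and I would not belabor it; if one prefers, one can cite the max-flow min-cut theorem in the form for integral (or real) capacities from \cite{schrijver} and simply state the resulting inequalities.
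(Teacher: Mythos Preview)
Your proposal is correct and follows essentially the same route as the paper's proof: both adjoin a super-source and super-sink, put the $|\alpha_i|$ as capacities on the new edges, and apply max-flow min-cut to characterize when a saturating flow exists. The only cosmetic difference is that the paper assigns the original edges the finite capacity $\sum_{\alpha_i>0}\alpha_i$ rather than $+\infty$, so that a cut crossing an original edge automatically has capacity at least $P$; your version with infinite capacities simply forces such cuts out of consideration. The bookkeeping of which side of the cut $S$ sits on and the resulting capacity formula $P-\sum_{i\in S}\alpha_i$ match the paper's computation.
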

	\begin{proof}
		Observe that the conditions \eqref{existenceofaflow} are necessary in order for $\mathcal{F}_G(\bm{\alpha})$ to be  nonempty. We now show they are also sufficient. For this, 
		we rephrase the problem as a max-flow problem on another graph. Let 
		\[G'=(V(G)\cup \{s,t\}, E(G)\cup \{(s, i) \mid i \in [0,n], \alpha_i>0)\}\cup \{(i, t) \mid i \in [0,n], \alpha_i<0)\}).\] Direct edges of $G'$ from smaller to larger vertices, where $s$ is the smallest and $t$ is the largest. 
		
		Let the edges $\{(s,i)\mid i \in [0,n],\, \alpha_i>0\}$ have upper capacity $\alpha_i$, and the edges $\{(i, t) \mid i \in [0,n],\, \alpha_i<0\}$,  have upper capacity $-\alpha_i$. Let the edges belonging to both $G$ and $G'$ have the  upper capacity $\sum_{i \in [0,n], \alpha_i>0}\alpha_i$. Assign all edges of $G'$ the lower capacity of $0$.

		If the maximum	flow on $G'$ saturates the edges incident to $s$ (equivalently, to $t$), then $\mathcal{F}_G(\bm{\alpha})$ is nonempty.  We thus proceed to show that if $\bm{\alpha}$ satisfies  \eqref{existenceofaflow} with the given $G$, then the maximum	flow on $G'$ saturates the edges incident to $s$. In other words, if $\bm{\alpha}$ satisfies  \eqref{existenceofaflow} with the given $G$, then the value of the maximum flow on $G'$ is  $\sum_{i \in [0,n], \alpha_i>0}\alpha_i$.
		
		Recall that by the max-flow min-cut theorem \cite[Theorem 10.3]{schrijver} the maximum value of an $s-t$ flow on $G'$ subject to the above capacity constraints equals the minimum capacity of an $s-t$ cut in $G'$. For the cut $(\{s\}, V(G)\backslash \{s\})$  the capacity is $\sum_{i \in [0,n], \alpha_i>0}\alpha_i$, and we show that this is the minimum capacity of an $s-t$  cut in $G'$.  If the cut contains any edge not incident to $s$ or $t$, then the capacity of that edge is already 	$\sum_{i \in [0,n], \alpha_i>0}\alpha_i$. 
		
		On the other hand, if the cut does not contain any edge not incident to $s$ or $t$, the partition of vertices is of the form $(\{s\}\cup S, S^c\cup \{t\}),$ where $S\subseteq [0,n]$ with   $\outdegG(S)=0$ and $S^c=[0,n]\backslash S$. Thus, by  \eqref{existenceofaflow} we have $\sum_{i\in S}{\alpha_i}\leq 0$. The capacity of the cut  $(\{s\}\cup S, S^c\cup \{t\})$ is \[\sum_{i \in S^c, (s,i) \in G'}\alpha_i-\sum_{i \in S, (i,t) \in G'} \alpha_i.\] Note that 
		\[0\geq\sum_{i\in S}{\alpha_i}= \sum_{i \in S, \alpha_i>0}\alpha_i+\sum_{i \in S, (i,t) \in G'} \alpha_i.\]
		Consequently,
		\begin{align*}
			\sum_{i \in S^c, (s,i) \in G'}\alpha_i-\sum_{i \in S, (i,t) \in G'} \alpha_i &\geq \sum_{i \in S^c, (s,i) \in G'}\alpha_i+\sum_{i \in S, \alpha_i>0}\alpha_i\\
			&=\sum_{i \in [0,n], \alpha_i>0}\alpha_i
		\end{align*}
		 In other words, the capacity of any cut is at least $\sum_{i \in [0,n], \alpha_i>0}\alpha_i$, and we saw that this is achieved. Thus, the value of the maximum	flow on $G'$ is  $\sum_{i \in [0,n], \alpha_i>0}\alpha_i$, as desired.
	\end{proof}	

	For $F\subseteq E(G\backslash 0)$, recall the numbers $f_{ij}$ given by 
	\[f_{ij}=\#\{(j,k)\in F\mid  k\leq i\}. \]
	By Corollary \ref{flows} (Theorem $\ref{finalflows}$ for the general case), $\LD(G,F)$ is in bijection with integral flows on the graph $\widetilde{G}\backslash\{s,0\}$ with the netflow vector $\bm{b}_G^F$ defined by
	\begin{align*}
	\bm{b}_G^F= (\indegG(1)-\outdegF(1),\dots,\indegG(n)-\outdegF(n),-\#E(G\backslash F))
	\end{align*} 
	via projection onto the edges $(i,t)$. 
	
	\begin{definition}
		For a collection of vertices $I$ of a graph $G$, define the outdegree $\outdegG(I)$ to be the number of edges from vertices in $I$ to vertices not in $I$.
	\end{definition}
	
	To each $I\subseteq[n]$, associate the integer $z_I^F$ given by 
	
	\begin{align}
	\label{parameters}
	z_I^F=\sum_{i\in S}\indegG(i)-\outdegF(i)
%
	\end{align}
	where $S\subseteq I$ is the maximal subset with $\outdeg_G(S)=0$.
	
	\begin{theorem}
		\label{genpermtheoremzI}
		For a simple graph $G$, $F\subseteq E(G\backslash 0)$, and $\{z_I^F\}$ the parameters defined by (\ref{parameters}), $\mathrm{Newton}(L_{G,F}(\bm{t}))$ is the generalized permutahedron
		\[\mathrm{Newton}(L_{G,F}(\bm{t}))=\conv(\LD(G,F))=P_n^z\left(\{z_I^F\}_{I\subseteq [n]}\right).\]
		Furthermore, each integer point of $P_n^z\left(\{z_I^F\}\right)$ is in $\LD(G,F)$, so $L_{G,F}(\bm{t})$ has SNP.
	\end{theorem}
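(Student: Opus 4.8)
The plan is to prove the two displayed equalities
\[
\mathrm{Newton}(L_{G,F}(\bm{t}))=\conv(\LD(G,F))=P_n^z\{z_I^F\}_{I\subseteq[n]}
\]
and then the polytopal-support statement, in that order. The first equality is immediate from the definition of $L_{G,F}$: every monomial appearing in $L_{G,F}(\bm{t})$ has exponent in $\LD(G,F)$ and has nonzero coefficient (all coefficients are $\pm(-1)^{\#F}$), so $\mathrm{Newton}(L_{G,F}(\bm{t}))=\conv(\LD(G,F))$ with no cancellation to worry about. The real content is identifying $\conv(\LD(G,F))$ with the generalized permutahedron $P_n^z\{z_I^F\}$.

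For the second equality I would use Corollary \ref{flows}: $\LD(G,F)$ is exactly the image under the projection $\psi$ (restriction to the edges $(j,t)$) of the integral flows on $\widetilde{G}\backslash\{s,0\}$ with netflow vector $\bm{b}_G^F$. Since $\mathcal{F}_{\widetilde{G}\backslash\{s,0\}}(\bm{b}_G^F)$ is an integral polytope (its defining matrix is the incidence matrix of a graph, hence totally unimodular, and the netflow vector is integral), $\conv(\LD(G,F))$ equals the image $\psi\big(\mathcal{F}_{\widetilde{G}\backslash\{s,0\}}(\bm{b}_G^F)\big)$ of the whole flow polytope, a genuine polytope in $\R^n$. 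So it suffices to show this projected flow polytope equals $\{\bm{t}\in\R^n:\sum_{i\in I}t_i\ge z_I^F\text{ for }I\ne[n],\ \sum_i t_i=z_{[n]}^F\}$. The inclusion $\subseteq$ is clear: for any flow $f$ and any $I$, $\sum_{i\in I}f(i,t)\ge z_I^F$ by definition of $z_I^F$ as a minimum, and $\sum_{i=1}^n f(i,t)=\#E(G\backslash F)=z_{[n]}^F$ by flow conservation (all flow must eventually reach $t$). For $\supseteq$, given $\bm{t}$ in the right-hand polytope, I need to produce a flow on $\widetilde{G}\backslash\{s,0\}$ with netflow $\bm{b}_G^F$ whose values on the edges $(j,t)$ are exactly $t_j$: delete those edges, adjust the netflow at each $j$ and at $t$ accordingly, and invoke Lemma \ref{feasibleflowproblem} — the resulting netflow vector is feasible precisely because the cut conditions $\sum_{i\in S}\alpha_i\le 0$ for $\outdeg(S)=0$ translate, after unwinding, into the inequalities $\sum_{i\in I}t_i\ge z_I^F$ that $\bm{t}$ was assumed to satisfy. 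This is the step I expect to be the main obstacle: carefully matching the min-cut inequalities in Lemma \ref{feasibleflowproblem} for the reduced graph with the minima $z_I^F$, using the observation (already noted before the theorem) that $z_I^F=z_{I'}^F$ where $I'$ is the largest subset of $I$ with $\outdeg_G(I')=0$, so that only the "closed" sets $S$ matter. Once the set equality is established, Proposition \ref{mobiusrelation} (or directly the defining inequalities) confirms it is the generalized permutahedron $P_n^z\{z_I^F\}$, since the $z_I^F$ satisfy supermodularity as remarked.

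Finally, the polytopal-support claim: I must show every lattice point of $P_n^z\{z_I^F\}$ lies in $\LD(G,F)$. Having identified $P_n^z\{z_I^F\}=\psi\big(\mathcal{F}_{\widetilde{G}\backslash\{s,0\}}(\bm{b}_G^F)\big)$, take an integer point $\bm{t}$ of this polytope; repeat the deletion-of-edges-$(j,t)$ argument to get a feasible integral-netflow flow problem on a graph, whose polytope has integral vertices (total unimodularity again, or \cite[Theorem 13.1]{schrijver}), so $\bm{t}$ extends to an \emph{integral} flow on $\widetilde{G}\backslash\{s,0\}$ with netflow $\bm{b}_G^F$; by Corollary \ref{flows} this integral flow's $\psi$-image, namely $\bm{t}$, is in $\LD(G,F)$. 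This mirrors exactly the converse direction in the proof of Theorem \ref{cap}, so it should go through with only notational changes.
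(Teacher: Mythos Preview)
Your proposal is correct and follows essentially the same route as the paper: use Corollary~\ref{flows} to identify $\LD(G,F)$ with projections of integral flows, get the inclusion $\conv(\LD(G,F))\subseteq P_n^z\{z_I^F\}$ from the definition of $z_I^F$, and get the reverse inclusion by deleting the edges $(j,t)$ and applying Lemma~\ref{feasibleflowproblem} to the residual problem on $G\backslash 0$ with netflow $\bm{d}-\bm{x}$. The paper dispatches what you flag as the main obstacle in one line: for $S\subseteq[n]$ with $\outdeg_G(S)=0$ one has $z_S^F=\sum_{i\in S}d_i$ directly, so the cut condition $\sum_{i\in S}(d_i-x_i)\le 0$ is exactly $\sum_{i\in S}x_i\ge z_S^F$; you are more explicit than the paper about the integrality of the residual flow polytope needed for the polytopal-support claim.
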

	
	\begin{proof}
		First, it is easy to check that the parameters $z_I^F$ satisfy the supermodularity condition. Thus, $P_n^z\left(\{z_I^F\}_{I\subseteq [n]}\right)$ is a generalized permutahedron. To observe that $\conv(\LD(G,F))\subseteq P_n^z\left(\{z_I^F\}\right)$, simply recall that $\LD(G,F)$ equals the projection of integral flows on $\widetilde{G}\backslash\{s,0\}$ with netflow $\bm{b}_G^F$ onto the edges $\{(j,t)\}_{j\in[n]}$.
		
		\noindent For the reverse direction, let $\bm{d}$ denote the truncation of $\bm{b}_G^F$ by its last entry, that is let $\bm{d}=(d_1,\dots,d_n)$ where \[d_i=\indegG(i)-\outdegF(i).\]
		We must show that each point $\bm{x}=(x_1,\ldots,x_n)\in P_n^z\left(\{z_I^F\}\right)$, the assignment $a_{nj}=x_j$ in $\widetilde{G}\backslash\{s,0\}$ can be extended to a flow on $\widetilde{G}\backslash\{s,0\}$. This is equivalent to showing  \[\mathcal{F}_{G\backslash 0}(\bm{d}-\bm{x})\neq \emptyset.\]
		By Lemma \ref{feasibleflowproblem}, it suffices to note that
		\[\sum_{i\in S}{d_i-x_i}\leq 0 \mbox{ for all } S\subseteq [n] \mbox{ with } \outdegG(S)=0.\]
		However, since $\outdeg_G(S)=0$, we have
		\[\sum_{i\in S}{x_i}\geq z_S= \sum_{i\in S}{d_i}. \]
	\end{proof}
	
	We further show that $\mathrm{Newton}(L_{G,F}(\bm{t}))$ can be written as $\sum_{I\subseteq [n]}{y_I \Delta_I}$ for some parameters $y_I$. Let $L=\{J \subseteq [n]\mid  \outdeg_G(J)=0 \}$. $L$ is a lattice under union and intersection, so consider the set $Q$ of join-irreducible elements of $L$ (elements that cannot be written as the union of other elements). 
	
	We explicitly describe the members of $Q$. Let $\delta(i)$ denote all the vertices of $G$ that can be reached from $i$ by a directed path (including $i$ itself).	
	\begin{lemma}
		An element $J\in L$ is join-irreducible if and only if $J=\delta(i)$ for some $i\in [n]$.
	\end{lemma}
	
	For $J\subseteq [n]$, define
	\begin{align}
		\label{convLDtypeY}
		y_J^F=\begin{cases}
		\indegG(k)-\outdegF(k) &\mbox{ if } J\in Q \mbox{, $J$ covers $J'$ in $L,$}\,\, J\backslash J'=\{k\}\\
		0 &\mbox{ if } J\notin Q
		\end{cases} 
	\end{align}
	
	\begin{proposition}
		\label{typey}
		For any simple graph $G$ and $F\subseteq E(G\backslash 0)$,
		\[P_n^z\left(\{z_I^F\}\right)=\sum_{I\subseteq [n]}{y_J^F \Delta_I}. \]
	\end{proposition}
	
	\begin{proof}
		Note that $z_I^F=z_{I_1}^F$ where $I_1$ is the largest element of $L$ contained in $I$. Thus, 
		\[z_I^F=z_{I_1}^F= \sum_{k\in I_1}{b_k} = \sum_{\mathclap{\substack{J\in Q\\  J\subseteq I_1}}}{y_J^F} =\sum_{J\subseteq I}{y_J^F}.  \]
		Apply Proposition \ref{mobiusrelation}.
	\end{proof}
	
	From (\ref{convLDtypeY}), we can read off the $\{y_I^F\}$ decomposition of $\mathrm{Newton}(L_{G,F}(\bm{t}))$.  Then, 
	\begin{align}
		\label{ldpolytopeydescription}
		\mathrm{Newton}(L_{G,F}(\bm{t}))= \sum_{i=1}^n{(\indegG(i)-\outdegF(i)) \Delta_{\delta(i)} }.
	\end{align}
	
	\begin{example}
		For a simple graph $G$, recall that the transitive closure of $G$ is the simple graph formed by adding edges $(i,j)$ to $E(G)$ whenever the vertices $i\neq j$ are connected by a directed path in $G$. If $G$ is a simple graph on $[0,n]$ such that the transitive closure of $G\backslash\{0\}$ is complete, then for each $F\subseteq E(G\backslash 0)$, 
		\[\mathrm{Newton}(L_{G,F}(\bm{t})) = \Pi_n\left (\indegG(1)-\outdegF(1), \ldots, \indegG(n)-\outdegF(n)\right )
		\]
		
		\noindent where $\Pi_n(\bm{x})$ is the Pitman-Stanley polytope as defined in \cite{sppolytope}, but shifted up one dimension in affine space, that is
		\begin{align*}
		\Pi_n(\bm{x})&=\left \{\bm{t}\in \R^n_{\geq 0}\mid  \sum_{p=1}^{k}{t_p} \leq \sum_{p=1}^{k}{x_p} \mbox{ for } k\in [n-1],\, \mbox{ and } \sum_{p=1}^{n}{t_p} = \sum_{p=1}^{n}{x_p}  \right \} \\
		&=x_n\Delta_{\{n\}}+x_{n-1}\Delta_{\{n-1,n\}}+\cdots+x_1\Delta_{[n]}.
		\end{align*}
	\end{example}
	
	\begin{proposition}
		If $T$ is a tree on $[0,n]$, then $\mathrm{Newton}(L_{T,F}(\bm{t}))$ is a simple polytope.
	\end{proposition}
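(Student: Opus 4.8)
The plan is to realize $\operatorname{Newton}(L_{T,F}(\bm t))$ as a flow polytope and to use that deleting the vertex $0$ from a tree leaves a forest. By Corollary~\ref{flows} (equivalently by \eqref{ldpolytopeydescription}), $\operatorname{Newton}(L_{T,F}(\bm t))=\conv(\LD(T,F))$ is the image of the flow polytope $\mathcal F_{\widetilde T\backslash\{s,0\}}(\bm b_T^F)$ under the projection recording the flows on the edges $(j,t)$, $j\in[n]$. Since $T$ is a tree, $\widetilde T\backslash\{s,0\}$ is the forest $T[1,n]$ together with an apex $t$ joined to every vertex of $[1,n]$, and as a forest has no cycle, flow conservation expresses the flow on each forest edge as an affine function of the flows on the edges $(j,t)$; hence that projection is injective and $\operatorname{Newton}(L_{T,F}(\bm t))$ is affinely isomorphic to $\mathcal F_{\widetilde T\backslash\{s,0\}}(\bm b_T^F)$. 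So it suffices to prove this flow polytope is simple.

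Next, because no edge of $T[1,n]$ joins distinct components and summing flow conservation over a component $C$ forces $\sum_{j\in C}f(j,t)$ to be a constant $N_C$, the polytope $\mathcal F_{\widetilde T\backslash\{s,0\}}(\bm b_T^F)$ is the Cartesian product, over the components $C$ of $T[1,n]$, of the flow polytopes of $C\cup\{t\}$ with $t$ joined to all of $C$. A Cartesian product of polytopes is simple iff every factor is, so we reduce to a single tree $C$ with such an apex. By injectivity of the projection again, this polytope is cut out inside $\{x\ge 0:\ \sum_{v\in C}x_v=N_C\}$ by one inequality $f(e)\ge 0$ per edge $e$ of $C$. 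Root $C$ at its unique vertex adjacent to $0$ in $T$; then $f(e)\ge 0$ becomes a bound — upper or lower according to the orientation of $e$ relative to the root — on $\sum_{w\in D_e}x_w$, where $D_e\subseteq V(C)$ is the subtree of $C$ hanging below $e$ and the bound is additive over $D_e$. The sets $\{D_e\}_{e\in E(C)}$ form a laminar family, so the polytope is a generalized permutahedron with laminar constraint structure; I would then identify it with a translate of a polytope $P_n^y(\{y_I\})$ supported on a laminar, hence building, collection of sets, and conclude simplicity from the theory of generalized permutahedra in \cite{genperms}.

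The step I expect to be the main obstacle is this last identification. The inequalities $f(e)\ge 0$ are a mix of lower bounds (on $D_e$ when $e$ points away from the root through its larger endpoint) and upper bounds (otherwise), and to write the polytope uniformly as a Minkowski sum $\sum y_I\Delta_I$ over a building set one must replace the upper-bounded $D_e$'s by their complements in $C$ and verify the building-set axioms still hold; laminarity of the constraint family alone does not give simplicity (witness the hypersimplex), so one genuinely uses that the $D_e$ are subtrees of a tree and that the bounds are additive in $\bm b_T^F$. An alternative route is induction on $\#V(C)$: peel off the maximum vertex $m$ of $C$ — its only outgoing edge is $(m,t)$, and its incident flows are determined by conservation — and delete $m$, rerouting its in-edges to $t$ and transferring $\indeg_T(m)\ge 0$ to the netflow at $t$ (which makes $f(m,t)\ge 0$ automatic); this returns a tree-with-apex flow polytope on fewer vertices. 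The catch there is that the construction is not an honest Cartesian product, so one must check directly that the rerouting leaves every vertex of $\mathcal F_{C\cup\{t\}}(\cdot)$ on exactly $\dim$ of its facets.
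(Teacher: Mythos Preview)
Your proposal is honest about its own gap: you reduce the question to simplicity of a flow polytope on a tree-with-apex, but you do not close the argument, and the two endings you sketch are both left open. In fact your first ending---identifying the polytope with $P_n^y(\{y_I\})$ for a building collection and invoking nestohedron simplicity---does not go through. The relevant collection is $\{\delta(i):i\in[n]\}$ from \eqref{ldpolytopeydescription}, and these sets need not form a building set even when $T$ is a tree: take $T$ on $[0,4]$ with edges $(0,3),(1,3),(2,3),(3,4)$; then $\delta(1)=\{1,3,4\}$ and $\delta(2)=\{2,3,4\}$ meet, but $\delta(1)\cup\delta(2)=\{1,2,3,4\}$ is not any $\delta(\ell)$. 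So laminarity fails and the nestohedron route is blocked. Your inductive alternative would still need a direct facet count at each vertex, which is exactly the work you were hoping to avoid.

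The paper's proof takes a different and shorter path, and it does not pass through the flow-polytope realization at all. It uses the Minkowski-sum description \eqref{ldpolytopeydescription}, already established, so that the normal fan of $\mathrm{Newton}(L_{T,F}(\bm t))$ is the common refinement of the fans $N(\Delta_{\delta(i)})$. By the Cone--Preposet Dictionary of \cite{genpermfaces}, simplicity is equivalent to every vertex poset $Q_v$ being a \emph{tree-poset} (Hasse diagram acyclic). A maximal cone of $N(\Delta_I)$ records the relations $x_i>x_j$ for all $j\in I$ and a chosen $i\in I$; since $I=\delta(k)$ consists of vertices reachable from $k$ by increasing paths in $T$, each such relation lifts to an undirected path in $T$. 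A cycle in the Hasse diagram of some $Q_v$ would then produce two distinct undirected walks in $T$ between the same pair of vertices, contradicting that $T$ is a tree. This bypasses both the upper/lower-bound bookkeeping and the building-set issue entirely.
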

	
	\begin{proof}
		By the Cone-Preposet Dictionary for generalized permutahedra, (\cite{genpermfaces}, Proposition 3.5) it is enough to show that each vertex poset $Q_v$ is a tree-poset, that is, its Hasse diagram has no cycles. To show this, let $I\subseteq [n]$ and consider the normal fan $N(\Delta_I)$ of the simplex $\Delta_I$. By (\ref{ldpolytopeydescription}), the normal fan of $\mathrm{Newton}(L_{G,F}(\bm{t}))$ is the refinement of normal fans $N(\Delta_I)$.
		
		Thus, a maximal cone of the normal fan of $\mathrm{Newton}(L_{G,F}(\bm{t}))$ is given by an intersection of maximal cones in each $N(\Delta_I)$ for $I=\delta(j)$, $j\in [n]$, $\indeg_T(j)>0$. A maximal cone in $N(\Delta_I)$ gives the vertex poset relations $x_i>x_j$ for all $j\in I$ and any chosen $i\in I$. Thus, relations in the Hasse diagram of a vertex poset lift to undirected paths in $T$.
		
		If some $Q_v$ has a cycle $C$, then we can lift the relations to get two different paths in $T$ between two vertices. This subgraph will contain a cycle, contradicting that $T$ is a tree.
	\end{proof}

	The Newton polytopes of the homogeneous components of $L_G(\bm{t})$ are also generalized permutahedra. 
	
	\begin{definition}
		For each $k\geq 0$ let $L_G^k(\bm{t})$ denote the degree $\#E(G)-k$ homogeneous component of $L_G(\bm{t})$, that is 
		\begin{align*}
		L_G^k(\bm{t})=\sum_{\mathclap{\substack{F\subseteq E(G\backslash 0)\\
					 \#F =k}}}{L_{G,F}(\bm{t})}
		\end{align*}
	\end{definition}
	
	For a simple graph $G$ on $[0,n]$, Theorem \ref{cap} showed that the augmented graph $\Gaug$ of Definition \ref{moddingofG} has the property that the projection of integral flows on $\Gaug$ with netflow
	\[\bm{b}_G^{\emptyset}=\left (\indegG(1),\ldots, \indegG(n),-\#E(G) \right )\]  
	and capacitance $0\leq y_{ij}\leq 1 $ for all $1\leq j<i\leq n$ onto the edges labeled $a_{nj}$ for $j \in [n]$ is exactly $\LD(G)$. The following construction is a variation on this theme designed so its integral flows will only project to left-degree sequences whose entries have a particular sum.
	\begin{definition}
		Given a simple graph $G$ on $[0,n]$ and $k\geq 0$, let $G^{(k)}$ be the graph on $[1,n+1]\cup \{t\}$ with labeled edges $E_a\cup E_z\cup E_y$ where
		\begin{align*}
		E_a&\mbox{ consists of edges }a_{nj}:j\to t\mbox{ for } j\in [n];  \\
		E_z&\mbox{ consists of edges }z_{ij}:j\to i\mbox{ for }(j,i)\in E(G\backslash 0);\\
		E_y&\mbox{ consists of edges }y_{ij}:j \to n+1\mbox{ for } (j,i)\in E(G\backslash 0).
		\end{align*}
		
		The flow polytope $\F_{G^{(k)}}^c(\bm{b}_G^{(k)} )$ is the flow polytope of 
		 $G^{(k)}$ with  netflow vector $\bm{b}_G^{(k)} = (\indegG(1),\ldots, \indegG(n), -k, k-\#E(G))$ and capacities $1$ on the edges $y_{ij}$.
	\end{definition}
	\begin{example}
		For $G$ the complete graph on $[0,3]$, $G^{(k)}$ is shown below alongside $\Gaug$ for comparison.\\
		\begin{center}
			\includegraphics[]{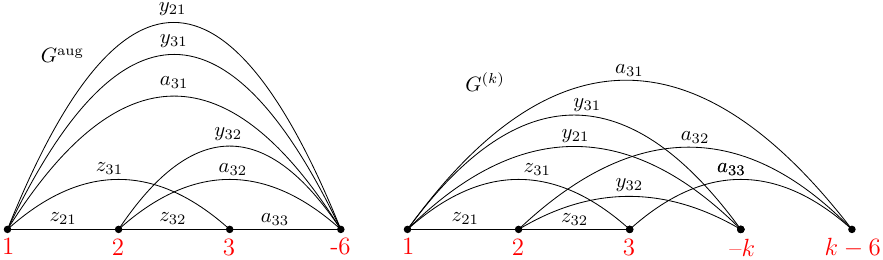}
		\end{center}
	\end{example}
	
	Note that capacitated integral flows on $G^{(k)}$ with netflow $\bm{b}_G^{(k)}$ are in bijection with capacitated integral flows on $\Gaug$ with netflow $\bm{b}_G^\emptyset$ where exactly $k$ edges $y_{ij}$ have flow 1, and the bijection preserves the values on the edges $\{a_{nj}\mid j\in[n] \}$.
	
	\begin{theorem}
		\label{ldpolynomialhomogeneousnewton}
		For $k\geq 0$, if $\psi$ is the projection that takes a flow on $\mathcal{F}_{G^{(k)}}^c\left (\bm{b}_G^{(k)}\right )$ to the tuple of its values on the edges labeled $a_{nj}$ for $j$ in $[n]$, then 
		\begin{align*}
		\mathrm{Newton}\left(L^k_G(\bm{t})\right)=\psi \left ( \mathcal{F}_{G^{(k)}}^c\left (\bm{b}_G^{(k)}\right ) \right ).
		\end{align*}
	\end{theorem}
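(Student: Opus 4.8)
The plan is to adapt the proof of Theorem~\ref{cap} essentially verbatim, the one new ingredient being that the auxiliary vertex $n+1$ of $G^{(k)}$ carries netflow $-k$, which forces exactly $k$ of the unit-capacity edges $y_{i,\,j}$ to be saturated and thereby singles out the subsets $F\subseteq E(G\backslash 0)$ with $\#F=k$. First I would record that no cancellation occurs in $L_G^k(\bm{t})=\sum_{\#F=k}L_{G,F}(\bm{t})$: every monomial of $L_{G,F}(\bm{t})$ has sign $(-1)^{\#F}=(-1)^k$, so all monomials of $L_G^k(\bm{t})$ share this sign and $\mathrm{Newton}(L_G^k(\bm{t}))=\conv\big(\bigcup_{\#F=k}\LD(G,F)\big)$. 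Consequently the ``furthermore'' clause reduces to showing that every lattice point of this convex hull lies in some $\LD(G,F)$ with $\#F=k$ (then its coefficient in $L_G^k$ is a nonzero multiple of $(-1)^k$) and that the entries of such a point sum to $\#E(G)-k$.

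For the inclusion $\mathrm{Newton}(L_G^k(\bm{t}))\subseteq\psi\big(\mathcal{F}^c_{G^{(k)}}(\bm{b}_G^{(k)})\big)$, I would fix $F$ with $\#F=k$ and $\alpha\in\LD(G,F)$. By Corollary~\ref{flows}, $\alpha$ records the values on the edges $(j,t)$ of an integral flow on $\widetilde{G}\backslash\{s,0\}$ with netflow $\bm{b}_G^F$; exactly as in the proof of Theorem~\ref{cap} this corresponds to an integral capacitated flow on $\Gaug$ with netflow $\bm{b}_G^\emptyset$ whose saturated $y$-edges are precisely those indexed by $F$, and hence, through the bijection recorded immediately before this theorem (which preserves the $a_{n,\,j}$-values), to an integral capacitated flow on $G^{(k)}$ with netflow $\bm{b}_G^{(k)}$ of $\psi$-image $\alpha$. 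Thus each $\LD(G,F)$ with $\#F=k$ lies in $\psi\big(\mathcal{F}^c_{G^{(k)}}(\bm{b}_G^{(k)})\big)$, and since the latter is convex so does their convex hull.

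For the reverse inclusion I would mimic the second half of the proof of Theorem~\ref{cap}. Let $\alpha$ be a lattice point of $\psi\big(\mathcal{F}^c_{G^{(k)}}(\bm{b}_G^{(k)})\big)$, realized as the tuple of $a_{n,\,j}$-values of some (possibly non-integral) flow $f$ on $G^{(k)}$ with netflow $\bm{b}_G^{(k)}$; deleting the integer-valued edges $a_{n,\,j}$ and updating the netflow accordingly exhibits the fiber over $\alpha$ as an integrally capacitated flow polytope with integral netflow, which is integral by \cite[Theorem~13.1]{schrijver}, so $f$ may be taken integral. Then $f(y_{i,\,j})\in\{0,1\}$, and the netflow $-k$ at vertex $n+1$ forces exactly $k$ of these to equal $1$; setting $F=\{(j,i)\in E(G\backslash 0):f(y_{i,\,j})=1\}$ gives $\#F=k$, and $f$ restricts to a flow on $\widetilde{G}\backslash\{s,0\}$ with netflow $\bm{b}_G^F$, whence $\alpha\in\LD(G,F)$. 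So every lattice point of $\psi\big(\mathcal{F}^c_{G^{(k)}}(\bm{b}_G^{(k)})\big)$ lies in $\bigcup_{\#F=k}\LD(G,F)$. Finally, $\psi\big(\mathcal{F}^c_{G^{(k)}}(\bm{b}_G^{(k)})\big)$ is an integral polytope, because the $\psi$-preimage within $\mathcal{F}^c_{G^{(k)}}(\bm{b}_G^{(k)})$ of any one of its vertices is a nonempty face of $\mathcal{F}^c_{G^{(k)}}(\bm{b}_G^{(k)})$, hence contains a vertex of the latter, which is a lattice point by \cite[Theorem~13.1]{schrijver} and projects to the given vertex. Therefore $\psi\big(\mathcal{F}^c_{G^{(k)}}(\bm{b}_G^{(k)})\big)=\conv\big(\psi(\mathcal{F}^c_{G^{(k)}}(\bm{b}_G^{(k)}))\cap\mathbb{Z}^n\big)\subseteq\conv\big(\bigcup_{\#F=k}\LD(G,F)\big)=\mathrm{Newton}(L_G^k(\bm{t}))$, which with the previous paragraph gives the asserted equality and shows $L_G^k$ has polytopal support; the count that the entries of each such $\alpha$ sum to $\#E(G)-k$ follows since the total flow into $t$ in $\widetilde{G}\backslash\{s,0\}$ with netflow $\bm{b}_G^F$ equals $\#E(G\backslash F)=\#E(G)-k$. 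The one step I expect to require care, exactly as in Theorem~\ref{cap}, is this integral-lifting argument in the reverse inclusion; everything else is bookkeeping around the new vertex $n+1$.
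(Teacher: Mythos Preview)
Your proposal is correct and follows essentially the same approach as the paper's proof, which also runs the argument of Theorem~\ref{cap} through the bijection between integral capacitated flows on $\Gaug$ with exactly $k$ saturated $y$-edges and those on $G^{(k)}$. Your version is in fact more careful than the paper's: you make explicit the no-cancellation observation that pins down $\mathrm{Newton}(L_G^k)$ as $\conv\bigl(\bigcup_{\#F=k}\LD(G,F)\bigr)$, you spell out why the netflow $-k$ at vertex $n+1$ forces exactly $k$ saturated $y$-edges, and you supply the integrality-of-the-projection step needed to upgrade equality of lattice points to equality of polytopes, all of which the paper leaves implicit.
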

	\begin{proof}
		Let $\alpha$ be an integer point in $\mathrm{Newton}\left(L^k_G(\bm{t})\right)$, so $\alpha \in \LD(G,F)$ for $F\subseteq E(G\backslash 0)$ with $\#F=k$. Then, $\alpha$ corresponds to a capacitated integral flow on $\Gaug$ with netflow $\bm{b}_G^\emptyset$, which in turn corresponds to a capacitated integral flow on $G^{(k)}$ with netflow $\bm{b}_G^{(k)}$ that $\psi$ takes to $\alpha$.
		
		Conversely, let $\alpha$ be an integer point in $\psi \left ( \mathcal{F}_{G^{(k)}}^c\left (\bm{b}_G^{(k)}\right ) \right )$. Lift $\alpha$ to an integral flow $f$ on $G^{(k)}$. The flow $f$ corresponds to an integral flow on $\Gaug$, so if $F=\{(j,i)\mid  y_{ij}=1 \mbox{ in } f \}$, then $\#F=k$ and $\alpha\in\LD(G,F)$. 
	\end{proof}
	
	Similar to the proof of Theorem \ref{genpermtheoremzI}, for $k\geq 0$ and $I\subseteq [n]$, define parameters $z_I^{(k)}$ by
	\begin{align}
		\label{homogeneousparameters}
		z_I^{(k)}=\min \left \{\sum_{i\in I}f(i,t)\mid  f \mbox{ is a flow on } G^{(k)} \mbox{ with netflow vector $\bm{b}_G^{(k)}$}  \right \}. 	
	\end{align}

	\begin{theorem}
		\label{ldhomogeneouspiecesnewton}
		For $k\geq 0$ and $\{z_I^{(k)}\}$ the parameters defined by (\ref{homogeneousparameters}), $\mathrm{Newton}(L_G^k(\bm{t}))$ is the generalized permutahedron
		\[\mathrm{Newton}(L_G^k(\bm{t}))=P_n^z\left(\{z_I^{(k)}\}_{I\subseteq [n]}\right).\]
		Furthermore, each integer point of $P_n^z\left(\{z_I^{(k)}\}\right)$ is a left-degree sequence, so $L_{G}^k(\bm{t})$ has SNP. Additionally, if $G$ is a tree, then $L_G^0(\bm{t})$ is the integer point transform of its Newton polytope.
	\end{theorem}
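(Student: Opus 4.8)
\emph{Setting up.} By Theorem~\ref{ldpolynomialhomogeneousnewton}, $\mathrm{Newton}(L_G^k(\bm t))=\psi\bigl(\mathcal{F}_{G^{(k)}}^c(\bm b_G^{(k)})\bigr)$, where $\psi$ records the flow on the edges $a_{n,\,j}$, i.e.\ on the edges entering the sink $t$ of $G^{(k)}$, and every integer point of this set lies in $\LD(G,F)$ for some $F\subseteq E(G\backslash 0)$ with $\#F=k$. Since $L_G^k(\bm t)=\sum_{\#F=k}L_{G,F}(\bm t)$ and every monomial of $L_{G,F}(\bm t)$ carries the sign $(-1)^{k}$, there is no cancellation, so each such integer point occurs in $L_G^k$ with nonzero coefficient; thus $L_G^k$ has polytopal support. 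It then remains to identify $\psi\bigl(\mathcal{F}_{G^{(k)}}^c(\bm b_G^{(k)})\bigr)$ with $P_n^z\{z_I^{(k)}\}$ and to show the latter is a generalized permutahedron.

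\emph{The two inclusions.} On $\mathcal{F}_{G^{(k)}}^c(\bm b_G^{(k)})$ one has $\sum_{j\in[n]}f(j,t)=\#E(G)-k$, while \eqref{homogeneousparameters} gives $\sum_{i\in I}f(i,t)\geq z_I^{(k)}$ for every $I$; hence $\psi\bigl(\mathcal{F}_{G^{(k)}}^c(\bm b_G^{(k)})\bigr)\subseteq P_n^z\{z_I^{(k)}\}$. For the reverse inclusion, given $\bm x=(x_1,\dots,x_n)\in P_n^z\{z_I^{(k)}\}$ set $f(j,t):=x_j$ and try to extend to a capacitated flow on $G^{(k)}$; equivalently, one needs a capacitated flow on $G^{(k)}\setminus\{(j,t):j\in[n]\}$ with netflow $\indegG(j)-x_j$ at each $j\in[n]$ and $-k$ at $n+1$. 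By a max-flow--min-cut argument analogous to that in Lemma~\ref{feasibleflowproblem} (adapted to the unit capacities on the edges $y_{i,\,j}$ via \cite[Lemma~1]{cap}), such a flow exists iff for every subset $S$ the net supply of $S$ is at most the total capacity of the edges leaving $S$. Cuts crossed by an edge $z_{i,\,j}$ have infinite capacity and are harmless; the remaining cases all reduce to the inequality $\sum_{i\in S'}x_i\geq \sum_{i\in S'}\indegG(i)-\min\bigl(k,\sum_{i\in S'}\outdeg_{G\backslash 0}(i)\bigr)$ for $S'\subseteq[n]$ with $\outdeg_{G\backslash 0}(S')=0$. Since the right-hand side is a lower bound for $z_{S'}^{(k)}$ (obtained by sending no flow into $S'$ and pushing as much flow as the demand at $n+1$ permits along the edges $y_{i,\,j}$ with $j\in S'$), this follows from $\bm x\in P_n^z\{z_I^{(k)}\}$.

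\emph{Generalized permutahedron.} To conclude that $P_n^z\{z_I^{(k)}\}$ is a generalized permutahedron it suffices to show $\{z_I^{(k)}\}$ is supermodular. As in the discussion preceding Theorem~\ref{genpermtheoremzI}, one first checks $z_I^{(k)}=z_{I'}^{(k)}$, where $I'$ is the largest subset of $I$ with $\outdeg_{G\backslash 0}(I')=0$ (flow into a vertex of $I\setminus I'$ can be rerouted to $t$ along an increasing path, witnessing this), and then that the resulting set function is supermodular. Alternatively, one may argue directly that $\psi\bigl(\mathcal{F}_{G^{(k)}}^c(\bm b_G^{(k)})\bigr)$ is a generalized permutahedron: every edge of the flow polytope arises from pushing flow along a single cycle of $G^{(k)}$, and since all edges at $t$ point into $t$, any such cycle meets $t$ in exactly two edges, so the displacement of $\psi$ along any edge of the polytope is parallel to some $e_i-e_j$ (or is zero); by the characterization of generalized permutahedra via their edge directions \cite{genpermfaces}, the projection is one. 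Combined with the previous paragraph, $\mathrm{Newton}(L_G^k(\bm t))=P_n^z\{z_I^{(k)}\}$ is a generalized permutahedron all of whose lattice points are left-degree sequences.

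\emph{The acyclic case, and the obstacle.} When $k=0$ only $F=\emptyset$ contributes, so $L_G^0(\bm t)=L_{G,\emptyset}(\bm t)=\sum_{\alpha\in\LD(G,\emptyset)}\bm t^{\alpha}$ and the coefficient of $\bm t^{\alpha}$ is the multiplicity of $\alpha$ in $\LD(G,\emptyset)$. By Corollary~\ref{flows} this multiplicity equals the number of integral flows on $\widetilde G\backslash\{s,0\}$ with netflow $\bm b_G^{\emptyset}$ taking the values $\alpha_j$ on the edges $(j,t)$, namely $K_{G\backslash 0}\bigl(\indegG(1)-\alpha_1,\dots,\indegG(n)-\alpha_n\bigr)$. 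When $G$ is acyclic the incidence vectors $\{e_i-e_j:(i,j)\in E(G\backslash 0)\}$ are linearly independent, so this Kostant partition function is at most $1$; as every lattice point of $\mathrm{Newton}(L_{G,\emptyset})$ already lies in $\LD(G,\emptyset)$ by Theorem~\ref{genpermtheoremzI}, each lattice point has coefficient exactly $1$. Hence $L_G^0$ is the integer-point transform of its Newton polytope. The main obstacle is the generalized permutahedron claim: pinning down $z_{I'}^{(k)}$ precisely is subtler than in Theorem~\ref{genpermtheoremzI}, since the unit capacities on the edges $y_{i,\,j}$ together with the demand $k$ at the vertex $n+1$ produce the $\min(k,\cdot)$ term, whose interaction with the lattice of $\outdeg_{G\backslash 0}$-closed subsets must be handled carefully (the edge-direction argument avoids this, at the cost of analyzing the faces of a capacitated flow polytope under $\psi$).
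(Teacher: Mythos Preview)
Your proof is correct and follows the same line as the paper's, only in far more detail: the paper dispatches the first two assertions with a single sentence (``analogous to that of Theorem~\ref{genpermtheoremzI}'') and then gives exactly your argument for the acyclic case via Corollary~\ref{flows} and the observation that an acyclic graph admits at most one flow for any netflow vector.

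Two remarks on the extra work you do. First, your edge-direction alternative is in fact complete, and your stated worry about it is unfounded. Any edge $e$ of the projection $\psi(P)$ is the $\psi$-image of some face $F$ of $P$; the tangent space of $F$ is spanned by edge directions of $P$, each of which (as you observe) projects either to $0$ or to some $e_i-e_j$. Since the projected span is the one-dimensional tangent line of $e$, all nonzero projected edge directions must be parallel, hence equal to $\pm(e_{i_0}-e_{j_0})$ for a single pair $i_0,j_0$. Thus every edge of $\psi(P)$ is indeed a root direction, and the characterization in \cite{genpermfaces} applies. No delicate face analysis is needed. Second, your capacitated max-flow--min-cut argument for the reverse inclusion is a genuine elaboration of what the paper's ``analogous'' hides; the paper's Theorem~\ref{genpermtheoremzI} uses only uncapacitated feasibility (Lemma~\ref{feasibleflowproblem}), and you are right that the $\min(k,\cdot)$ term complicates the direct supermodularity check. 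The edge-direction route sidesteps this entirely and is the cleaner way to finish.
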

	\begin{proof}
		The proof of the first two statements is analogous to that of Theorem \ref{genpermtheoremzI}. Alternatively, SNP follows from the fact that the $\mathrm{Newton}(L^k_G)$ is the intersection of $\mathrm{Newton}(L_G)$ by a hyperplane.
		
		Recall that the integer point transform of a polytope $P\subseteq \mathbb{R}^m$ is the polynomial
		\[
		L_P(x_1, \ldots, x_m)=
		\sum_{p \in P\cap \mathbb{Z}^m} \bm{x}^{p}.
		\]
		To prove the third statement we must show that if $G$ is a tree, all nonzero coefficients of  $L_G^0$ are 1. It follows from Corollary \ref{flows} (Theorem \ref{finalflows}) that $\LD(G,\emptyset)$ equals the multiset of projections of integral flows on $\widetilde{G}\backslash\{s,0\}$ with the netflow vector $\bm{b}_G^\emptyset$. Then, the multiplicity of any particular $\alpha\in \LD(T,\emptyset)$ is the number of flows on $G\backslash 0$ with netflow  $\bm{b}_G^\emptyset-\alpha$. However, trees admit at most one flow for any given netflow vector, so every element of $\LD(G,\emptyset)$ has multiplicity 1. This implies all coefficients in $L_G^0$ are 0 or 1.
	\end{proof}
	
	Theorems \ref{cap} and  \ref{ldhomogeneouspiecesnewton} imply the following.
	
	\begin{corollary}
		\label{hyperpl}
		Given a graph $G$ on the vertex set $[0,n]$ with $m$ edges, we have that 
		\[\mathrm{Newton}(L_G(\bm{t})) \cap \left\{(x_1, \ldots, x_n)\in \mathbb{R}^n \mid \sum_{i=1}^n x_i=m-k\right\}=P_n^z\left\{z_I^{(k)}\right\}_{I\subseteq [n]},\]
		for the parameters $\{z_I^{(k)}\}$ given in (\ref{homogeneousparameters}).
	\end{corollary}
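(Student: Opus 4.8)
The plan is to realize both $\mathrm{Newton}(L_G(\bm{t}))$ and $P_n^z\{z_I^{(k)}\}$ as coordinate projections of one and the same capacitated flow polytope, and then to match the hyperplane slice $\{\sum_{i=1}^n x_i=m-k\}$ on the image side with the flow polytope of $G^{(k)}$ on the source side. By Theorem \ref{newtonofldpolynomial} together with Theorem \ref{cap} (and the integral equivalences set up in Definitions \ref{augmentedarraygraph} and \ref{moddingofG}), for a simple graph $G$ we have
\[\mathrm{Newton}(L_G(\bm{t}))=\psi\!\left(\mathcal{F}^c_{\Gaug}(\bm{b}_G^\emptyset)\right),\]
where $\psi$ records the values on the edges $a_{n,\,j}$, $j\in[n]$. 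On the other hand, Theorem \ref{ldpolynomialhomogeneousnewton} gives $\mathrm{Newton}(L_G^k(\bm{t}))=\psi\!\left(\mathcal{F}^c_{G^{(k)}}(\bm{b}_G^{(k)})\right)$, and Theorem \ref{ldhomogeneouspiecesnewton} identifies the right-hand side with $P_n^z\{z_I^{(k)}\}$. Since $L_G(\bm{t})=\sum_{k\ge 0}L_G^k(\bm{t})$ with $L_G^k$ homogeneous of degree $\#E(G)-k=m-k$, the corollary reduces to the claim that
\[\psi\!\left(\mathcal{F}^c_{\Gaug}(\bm{b}_G^\emptyset)\right)\cap\{x\in\R^n:\ \textstyle\sum_{i=1}^n x_i=m-k\}=\psi\!\left(\mathcal{F}^c_{G^{(k)}}(\bm{b}_G^{(k)})\right).\]

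The heart of the argument is a conservation identity in $\Gaug$. Every $a$-edge and every $y$-edge of $\Gaug$ terminates at $t$, whose netflow in $\bm{b}_G^\emptyset$ is $-\#E(G)$; hence any flow $f\in\mathcal{F}^c_{\Gaug}(\bm{b}_G^\emptyset)$ satisfies $\sum_{j}f(a_{n,\,j})+\sum_{(j,i)}f(y_{i,\,j})=m$. Consequently $\psi(f)$ lies on $\{\sum_{i}x_i=m-k\}$ if and only if $\sum_{(j,i)}f(y_{i,\,j})=k$. Rerouting the $y$-edges to terminate at the new vertex $n+1$ (whose netflow is $-k$) sends the flows of $\mathcal{F}^c_{\Gaug}(\bm{b}_G^\emptyset)$ with $\sum_{(j,i)}f(y_{i,\,j})=k$ bijectively onto $\mathcal{F}^c_{G^{(k)}}(\bm{b}_G^{(k)})$, and this bijection leaves the values on the edges $a_{n,\,j}$ unchanged; this is exactly the correspondence recorded just before Theorem \ref{ldpolynomialhomogeneousnewton}, now read for real (not only integral) flows. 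Using the elementary identity $\psi(P)\cap H=\psi\!\left(P\cap\psi^{-1}(H)\right)$, valid for any map $\psi$, set $P$, and hyperplane $H$ in the target, we obtain
\[\mathrm{Newton}(L_G(\bm{t}))\cap\{\textstyle\sum_{i}x_i=m-k\}=\psi\!\left(\mathcal{F}^c_{\Gaug}(\bm{b}_G^\emptyset)\cap\{f:\ \textstyle\sum_{(j,i)}f(y_{i,\,j})=k\}\right)=\psi\!\left(\mathcal{F}^c_{G^{(k)}}(\bm{b}_G^{(k)})\right)=P_n^z\{z_I^{(k)}\}.\]

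I expect the only real obstacle to be recognizing why a soft convexity argument does not suffice: one is tempted to conclude directly that $\mathrm{Newton}(L_G(\bm{t}))\cap\{\sum_i x_i=m-k\}$ equals $\conv\big(\operatorname{supp}(L_G^k)\big)$, but slicing a polytope by a hyperplane need not commute with passing to the convex hull of the lattice points it contains, so genuine geometric input is required. That input is precisely the flow-polytope realization above, which exhibits the slice itself as (the projection of) an honest flow polytope. Finally, for graphs $G$ with multiple edges one replaces $\Gaug$ and $G^{(k)}$ by the multigraph analogues constructed in Section \ref{sec6}; the conservation identity and the rerouting bijection are unchanged, so the same computation goes through verbatim.
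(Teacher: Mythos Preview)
Your proof is correct and follows the same route the paper intends: both identify the slice with $\mathrm{Newton}(L_G^k(\bm{t}))$ via the flow-polytope realizations of Theorems~\ref{newtonofldpolynomial} (equivalently Theorem~\ref{cap}) and~\ref{ldpolynomialhomogeneousnewton}, and then invoke Theorem~\ref{ldhomogeneouspiecesnewton}. The paper's one-line proof simply asserts $\mathrm{Newton}(L_G(\bm{t}))\cap\{\sum_i x_i=m-k\}=\mathrm{Newton}(L_G^k(\bm{t}))$ as following from these theorems; you have made explicit the mechanism behind that assertion, namely the conservation identity $\sum_j f(a_{n,j})+\sum_{(j,i)} f(y_{i,j})=m$ at the sink of $\Gaug$ and the resulting value-preserving bijection with $\mathcal{F}^c_{G^{(k)}}(\bm{b}_G^{(k)})$. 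Your remark that a naive ``slice the Newton polytope'' argument is insufficient without this geometric input is well taken and is precisely why the paper credits both theorems.
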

	\begin{proof} 
		We have that $\mathrm{Newton}(L_G(\bm{t})) \cap \{(x_1, \ldots, x_n)\in \mathbb{R}^n \mid \sum_{i=1}^n=m-k\}=\mathrm{Newton}(L_G^k(\bm{t})),$ which by Theorem \ref{ldhomogeneouspiecesnewton}  equals $P_n^z\left(\{z_I^{(k)}\}_{I\subseteq [n]}\right)$.
	\end{proof}
	
 Theorems \ref{pstheoremaltproof} and \ref{ldhomogeneouspiecesnewton} imply:

\begin{corollary}
		\label{latticeptenum}
		If $G$ is a tree on $[0,n]$, then the normalized volume of the flow polytope of $\widetilde{G}$ is  
		\begin{align*}
			\mathrm{Vol} \,\,\mathcal{F}_{\widetilde{G}} = \operatorname{Ehr}(P_G^0, 1), 
		\end{align*}
		where $P_G^0=\mathrm{Newton}(L_G^0(\bm{t}))$ is the generalized permutahedron specified in Theorem  \ref{ldhomogeneouspiecesnewton}.
		
	\end{corollary}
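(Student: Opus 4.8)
The plan is simply to chain together the two results quoted immediately before the corollary, together with Corollary \ref{flows}. First I would unwind the left-hand side: by definition $\operatorname{Ehr}(P_G^0,1) = \#\left(P_G^0\cap\mathbb{Z}^n\right)$, where $P_G^0 = \mathrm{Newton}(L_G^0(\bm{t}))$. Since $k=0$ forces $F=\emptyset$, we have $L_G^0(\bm{t}) = L_{G,\emptyset}(\bm{t})$, so $P_G^0$ is the Newton polytope whose lattice points we want to count. By the last assertion of Theorem \ref{ldhomogeneouspiecesnewton}, when $G$ is acyclic the polynomial $L_G^0(\bm{t})$ is the integer-point transform of $P_G^0$; equivalently, the lattice points of $P_G^0$ are exactly the sequences in $\LD(G,\emptyset)$, each occurring with multiplicity one. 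Hence $\#\left(P_G^0\cap\mathbb{Z}^n\right) = \#\LD(G,\emptyset)$, where the right side is the cardinality of the multiset (which here coincides with the number of distinct elements).

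Next I would identify $\#\LD(G,\emptyset)$ with the normalized volume. Applying Corollary \ref{flows} with $F=\emptyset$ (so $\outdeg_\emptyset(j)=0$ for all $j$ and $\bm{b}_G^\emptyset = (\indeg_G(1),\ldots,\indeg_G(n),-\#E(G))$), the multiset $\LD(G,\emptyset)$ is in bijection with the integral flows on $\widetilde{G}\backslash\{s,0\}$ with netflow $\bm{b}_G^\emptyset$, so $\#\LD(G,\emptyset) = K_{\widetilde{G}\backslash\{s,0\}}\left(\indeg_G(1),\ldots,\indeg_G(n),-\#E(G)\right)$. By \eqref{eq:vol} in Theorem \ref{pstheoremaltproof}, this Kostant partition function evaluation is precisely $\mathrm{Vol}\,\mathcal{F}_{\widetilde{G}}$. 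Composing the three equalities gives $\operatorname{Ehr}(P_G^0,1) = \mathrm{Vol}\,\mathcal{F}_{\widetilde{G}}$, as desired.

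There is no analytic difficulty here; the corollary is essentially a bookkeeping consequence of the two cited theorems. The only point that genuinely requires the acyclicity hypothesis, and the step I would flag as the ``main obstacle'' (mild as it is), is the passage from ``lattice points of $P_G^0$'' to ``the multiset $\LD(G,\emptyset)$'': a priori $\LD(G,\emptyset)$ counts full-dimensional leaves of $\mathcal{T}(G)$ and could contain a repeated left-degree sequence, in which case its cardinality — which equals the volume — would strictly exceed the number of distinct lattice points of $P_G^0$, and the identity would fail. As in the proof of Theorem \ref{ldhomogeneouspiecesnewton}, acyclicity removes this danger, since an acyclic graph admits at most one flow for each fixed netflow vector, forcing every element of $\LD(G,\emptyset)$ to have multiplicity one. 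Once this is noted, the claimed equality is immediate from the displayed chain above, and I would present the corollary in exactly that two-line form.
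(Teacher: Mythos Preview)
Your argument is correct and is exactly the chain the paper has in mind: the corollary is stated as an immediate consequence of Theorems \ref{pstheoremaltproof} and \ref{ldhomogeneouspiecesnewton}, and your write-up simply unpacks that implication (the appeal to Corollary \ref{flows} is already the content of \eqref{eq:vol}, so you could cite Theorem \ref{pstheoremaltproof} directly without the intermediate Kostant partition function step). Your identification of the one nontrivial point---that acyclicity is needed to ensure $\LD(G,\emptyset)$ is multiplicity-free, so that counting lattice points of $P_G^0$ really gives $\#\LD(G,\emptyset)$---is spot on and mirrors the proof of Theorem \ref{ldhomogeneouspiecesnewton}.
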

	
	Corollary \ref{latticeptenum} is of the same flavor as Postnikov's following beautiful result; for the details of the terminology used in this theorem refer to \cite{genperms}.
	
	\begin{theorem}\cite[Theorem 12.9]{genperms} \label{thm:post} For a bipartite graph $G$, the normalized volume of the root polytope $Q_G$ is 
	\begin{equation*} 
		\label{eq:Qvol} \mathrm{Vol}\, Q_G = \operatorname{Ehr}(P_G^{-}, 1), 
	\end{equation*}
where $P_G^{-}$ is the trimmed generalized permutahedron.
	
	\end{theorem}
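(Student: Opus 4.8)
The final statement, Theorem \ref{thm:post}, is Postnikov's; the paper records it because Corollary \ref{latticeptenum} is its flow-polytope analogue. The natural plan is therefore to first prove Corollary \ref{latticeptenum} from the two results it cites, and then to lift that argument to root polytopes.

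For Corollary \ref{latticeptenum} I would chain Theorems \ref{pstheoremaltproof} and \ref{ldhomogeneouspiecesnewton} while watching multiplicities. From the dissection of $\F_{\widetilde{G}}$ coming from the reduction tree $\mathcal{T}(G)$ (the proof of Theorem \ref{pstheoremaltproof}, equation \eqref{eq:vol}), $\mathrm{Vol}\,\F_{\widetilde{G}}$ equals the number of full-dimensional leaves, that is, the size of the multiset $\LD(G,\emptyset)$ of codimension-$0$ left-degree sequences. On the other side, $L_G^0(\bm{t})=L_{G,\emptyset}(\bm{t})=\sum_{\alpha\in\LD(G,\emptyset)}\bm{t}^{\alpha}$ directly from Definition \ref{ldpolynomialdefinition}. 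Now Theorem \ref{ldhomogeneouspiecesnewton} tells us that for acyclic $G$ the polynomial $L_G^0$ is the integer-point transform of its Newton polytope $P_G^0$, so the exponents appearing in $L_G^0$ are exactly the lattice points of $P_G^0$, each with coefficient $1$. Counting monomials two ways gives $\#\LD(G,\emptyset)=\#\bigl(P_G^0\cap\mathbb{Z}^n\bigr)=\operatorname{Ehr}(P_G^0,1)$, and comparison with the volume count completes the proof.

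The only delicate point — and the reason acyclicity is hypothesized — is the passage from the \emph{multiset} $\LD(G,\emptyset)$, whose cardinality is the leaf count and hence the volume, to the \emph{set} of distinct monomials of $L_G^0$. This is exactly the observation from the proof of Theorem \ref{ldhomogeneouspiecesnewton} that an acyclic graph admits at most one flow for any given netflow vector, so the fibres of the projection sending flows on $\widetilde{G}\backslash\{s,0\}$ to their left-degree sequences are singletons and all multiplicities equal $1$; for a graph with cycles the two counts genuinely differ. Once this is noted, the argument above is exact and the remainder is just unwinding definitions.

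To reach Theorem \ref{thm:post} itself I would run the same two-step argument one level up, replacing the reduction tree of $G$ by the reduction tree of the bipartite root polytope $Q_G$ supplied by the subdivision algebra \cite{root1,root2,prod}. That machinery dissects $Q_G$ into unimodular simplices indexed by forests and attaches to each a left-degree-type statistic, so $\mathrm{Vol}\,Q_G$ is again the number of top cells, which — by the root-polytope analogue of Theorem \ref{ldhomogeneouspiecesnewton} — should be the Ehrhart evaluation at $1$ of a degree-$0$ Newton polytope, expressible as a Minkowski sum of coordinate simplices in the manner of \eqref{ldpolytopeydescription}. The main obstacle is then a matter of identifying polytopes: showing that this Minkowski sum is precisely Postnikov's trimmed generalized permutahedron $P_G^{-}$. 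Everything genuinely combinatorial — the dissection, unimodularity, and the multiplicity-one statement (a bipartite graph oriented across its parts is acyclic) — is already available; the work lies in the root-to-flow dictionary and in that final polytope identification.
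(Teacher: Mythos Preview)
The paper does not prove Theorem~\ref{thm:post}. It is quoted verbatim from Postnikov \cite[Theorem 12.9]{genperms} as a result ``of the same flavor'' as Corollary~\ref{latticeptenum}, and the very next paragraph explicitly says ``We invite the interested reader to investigate their relationship.'' So there is no proof in the paper to compare your proposal against; the statement is an external citation.

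Your first paragraph, the derivation of Corollary~\ref{latticeptenum} from Theorems~\ref{pstheoremaltproof} and~\ref{ldhomogeneouspiecesnewton}, is correct and is exactly the argument the paper has in mind when it writes that those two theorems ``imply'' the corollary. Your attempt to then lift this to root polytopes and recover Postnikov's theorem is not a proof but a program, and you say as much: the ``root-polytope analogue of Theorem~\ref{ldhomogeneouspiecesnewton}'' is not established anywhere in this paper, and the identification of the resulting Minkowski sum with the trimmed generalized permutahedron $P_G^{-}$ is left entirely open. These are genuine gaps, not bookkeeping; the paper itself flags the relationship between Corollary~\ref{latticeptenum} and Theorem~\ref{thm:post} as something to be investigated rather than something that follows. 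If your goal is to supply a self-contained proof of Theorem~\ref{thm:post}, you should either carry out Postnikov's original argument or actually build the root-polytope analogues you allude to; the sketch as written does neither.
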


Root polytopes and flow polytopes are closely related, as can be seen by contrasting the techniques and results in the papers \cite{root1, root2, prod,  mm, genperms}. It is thus reasonable to expect that Corollary  \ref{latticeptenum} and Theorem \ref{thm:post} are related mathematically. We invite the interested reader to investigate their relationship.

\section{Newton polytopes of Schubert and Grothendieck polynomials}
\label{sec5}
	In this section, we discuss the connection between left-degree sequences,  Schubert polynomials, and Grothendieck polynomials discovered in \cite{pipe1} and relate it to their Newton polytopes. Our main theorem is the following.
	
	\begin{letteredtheorem}
		\label{theoremC}
		Let $\pi\in S_{n+1}$ be of the form $\pi=1\pi'$ where $\pi'$ is a dominant permutation of $\{2,3,\ldots n+1\}.$ Then the Grothendieck polynomial $\mathfrak{G}_{\pi}$ has SNP and the Newton polytope of each homogeneous component of $\mathfrak{G}_{\pi}$ is a generalized permutahedron. In particular, the Schubert polynomial $\mathfrak{S}_{\pi}$ has SNP and $\mathrm{Newton}(\mathfrak{S}_{\pi})$ is a generalized permutahedron. Moreover, $\mathfrak{S}_{\pi}$ is the integer point transform of its Newton polytope.
	\end{letteredtheorem}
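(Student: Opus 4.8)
The plan is to deduce the theorem entirely from the left-degree polynomial results of Section~\ref{sec4}, via the dictionary between Grothendieck polynomials and left-degree sequences from \cite{pipe1}. First I would recall from \cite{pipe1} that for $\pi=1\pi'$ with $\pi'$ dominant there is an acyclic graph $G=G_\pi$ associated to $\pi$ such that, under the identification of $\mathfrak{G}_\pi$ with the right-degree polynomial of \cite{pipe1} together with the left/right symmetry recorded after Definition~\ref{ldpolynomialdefinition}, $\mathfrak{G}_\pi(\bm{x})$ is carried to the left-degree polynomial $L_G(\bm{t})$ of Definition~\ref{ldpolynomialdefinition}. This identification is, up to a reindexing of the variables, the identity on monomials; it therefore preserves Newton polytopes and the generalized-permutahedron property (the braid arrangement is invariant under coordinate permutations and under central symmetry). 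Moreover it carries each homogeneous component of $\mathfrak{G}_\pi$ to one of the components $L_G^k(\bm{t})$, and carries the Schubert polynomial $\mathfrak{S}_\pi$ --- the part of $\mathfrak{G}_\pi$ coming from the full-dimensional leaves --- to the extreme component $L_G^0(\bm{t})=L_{G,\emptyset}(\bm{t})$. Since every graph considered here has edges directed from smaller to larger vertices, $G_\pi$ is automatically acyclic; if $G_\pi$ has multiple edges one first invokes the generalization of Section~\ref{sec6}, and otherwise the simple-graph statements of Section~\ref{sec4} apply verbatim.

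With this dictionary in hand, the first assertion is immediate from Theorem~\ref{newtonofldpolynomial} (part of Theorem~\ref{theoremB}): that theorem identifies $\mathrm{Newton}(L_G(\bm{t}))$ with a projection of an auxiliary polytope and shows that every lattice point of that image lies in $\LD(G)$, hence appears with a nonzero coefficient in $L_G(\bm{t})$. No cancellation can occur, since $\mathrm{codim}(\alpha)=\#E(G)-\sum_i\alpha_i$, and hence the sign $(-1)^{\mathrm{codim}(\alpha)}$, depends only on $\alpha$, so the coefficient of $\bm{t}^{\alpha}$ is $\pm$ the multiplicity of $\alpha$ in $\LD(G)$. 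Transporting back, $\mathfrak{G}_\pi$ is a weighted integer-point transform of $\mathrm{Newton}(\mathfrak{G}_\pi)$ with all weights nonzero; in particular it has polytopal support. The statement on homogeneous components follows the same way from Theorem~\ref{ldhomogeneouspiecesnewton}: for every $k\geq 0$ the polytope $\mathrm{Newton}(L_G^k(\bm{t}))=P_n^z\{z_I^{(k)}\}$ is a generalized permutahedron and $L_G^k$ has polytopal support, hence the same holds for each homogeneous component of $\mathfrak{G}_\pi$.

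For the refined claim about $\mathfrak{S}_\pi$, recall that $\mathfrak{S}_\pi$ corresponds to $L_G^0(\bm{t})$ and that $G_\pi$ is acyclic; the third assertion of Theorem~\ref{ldhomogeneouspiecesnewton} then says precisely that $L_G^0(\bm{t})$ is the integer-point transform of its Newton polytope --- every nonzero coefficient equals $1$, and the support is the full set of lattice points of $\mathrm{Newton}(L_G^0)=P_n^z\{z_I^{(0)}\}$, a generalized permutahedron. Pulling this back gives that $\mathfrak{S}_\pi$ has polytopal support, that $\mathrm{Newton}(\mathfrak{S}_\pi)$ is a generalized permutahedron, and that $\mathfrak{S}_\pi=\sum_{\alpha}\bm{x}^{\alpha}$, the sum running over the lattice points $\alpha$ of $\mathrm{Newton}(\mathfrak{S}_\pi)$. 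The consequences for \cite[Conjecture 5.1 \& 5.5]{newtonalgcom} in the case of permutations $1\pi'$ with $\pi'$ dominant then follow at once, since those conjectures assert exactly the polytopal-support property and the generalized-permutahedron property just established.

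I expect the main obstacle to lie in the first step rather than in anything afterward: making the dictionary of \cite{pipe1} completely precise for the family $1\pi'$ with $\pi'$ dominant --- pinning down the graph $G_\pi$, verifying that the monomial identification matches homogeneous components of $\mathfrak{G}_\pi$ with the $L_G^k$ and $\mathfrak{S}_\pi$ with $L_G^0$, and in particular treating the case where $G_\pi$ has multiple edges so that the Section~\ref{sec6} extensions of the Section~\ref{sec4} theorems may legitimately be invoked. Once that is in place, every remaining input is supplied verbatim by Theorems~\ref{newtonofldpolynomial} and \ref{ldhomogeneouspiecesnewton}, equivalently by Theorem~\ref{theoremB} together with its refinements.
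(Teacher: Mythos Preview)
Your overall strategy matches the paper's: reduce Theorem~\ref{theoremC} to the results of Section~\ref{sec4} via the connection to right-degree (equivalently left-degree) polynomials established in \cite{pipe1}. However, your description of that connection is wrong in a way that matters.

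The relation from \cite{pipe1}, recorded in the paper as Theorem~\ref{relatingrdandschub}, is \emph{not} that $\mathfrak{G}_\pi$ agrees with a right-degree polynomial up to a mere reindexing of variables. Rather, for a tree $T(\pi)$ and nonnegative integers $g_i=g_i(\pi)$ one has
\[
\widetilde{R}_{T(\pi)}(\bm{t})=\Bigl(\prod_{i=1}^n t_i^{g_i}\Bigr)\,\mathfrak{G}_\pi(t_1^{-1},\ldots,t_n^{-1}),
\]
so on exponent vectors the induced map is the affine involution $\varphi:\alpha\mapsto g-\alpha$, not a coordinate permutation. This map does still preserve polytopal support and the generalized-permutahedron property (negation and translation each preserve the braid fan as a normal fan), and it exchanges the \emph{lowest}-degree component $\mathfrak{S}_\pi$ of $\mathfrak{G}_\pi$ with the \emph{top}-degree component $\widetilde{R}^0_{T(\pi)}$; that is how your pairing of $\mathfrak{S}_\pi$ with $L_G^0$ is actually justified. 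Your parenthetical about central symmetry is exactly the right ingredient here, but it is inconsistent with your claim that the identification is ``the identity on monomials.'' There is also a passage to $\widetilde{R}$ (forgetting variables that are identically zero) that you do not mention.

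A second, smaller gap: you argue that $G_\pi$ is ``automatically acyclic'' because edges go from smaller to larger vertices. That only rules out \emph{directed} cycles, which every graph in this paper lacks; the hypothesis in the third clause of Theorem~\ref{ldhomogeneouspiecesnewton} (and in Theorem~\ref{rdanaloguecollection}) is that the underlying undirected graph be acyclic, i.e.\ a forest, so that a netflow vector determines at most one flow. This does hold here, but because $T(\pi)$ is in fact a tree, not for the reason you gave. (The paper, incidentally, obtains the $0,1$-coefficient property of $\mathfrak{S}_\pi$ by citing \cite{pipe1} directly rather than via the acyclic-graph clause; your route through that clause is equally valid once you know $T(\pi)$ is a tree.)
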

	
	Theorem \ref{theoremC} implies that the recent conjectures of  Monical, Tokcan, and Yong \cite[Conjecture 5.1 \& 5.5]{MTY} are true in the special case of permutations  $1\pi'$, where $\pi'$ is a dominant permutation. The authors and Alex Fink prove \cite[Conjecture 5.1]{MTY} in its full generality in \cite{FMS}. The following conjecture,  discovered jointly with Alex Fink, is a strengthening of \cite[Conjecture 5.5]{MTY} based on the results of this paper. We have tested it for all $\pi\in S_n$, for $n\leq 8$.
		
	\begin{conjecture}
		\label{conj:groth}
		The Grothendieck polynomial $\mathfrak{G}_{\pi}$ has SNP and the Newton polytope of each homogeneous component of $\mathfrak{G}_{\pi}$ is a generalized permutahedron.
	\end{conjecture}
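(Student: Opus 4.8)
The statement to prove is that, for every $\pi\in S_n$, the Grothendieck polynomial $\mathfrak{G}_\pi$ has polytopal support and the Newton polytope of each homogeneous component $\mathfrak{G}_\pi^{(k)}$ (the part of degree $\ell(\pi)+k$) is a generalized permutahedron. My plan is to reduce this to a single combinatorial statement about pipe dreams and then attack that statement. Recall the pipe-dream expansion $\mathfrak{G}_\pi=\sum_D(-1)^{|D|-\ell(\pi)}\bm{x}^{\mathrm{wt}(D)}$, where $D$ ranges over all pipe dreams of $\pi$, $\mathrm{wt}(D)_i$ is the number of $+$'s of $D$ in row $i$, and $|D|=\sum_i\mathrm{wt}(D)_i$. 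Two pipe dreams of the same weight have the same $|D|$, so the coefficient of each monomial is, up to a global sign, a \emph{count} of pipe dreams --- there is no cancellation. Consequently $\mathrm{Newton}(\mathfrak{G}_\pi^{(k)})=\conv\big(W_k(\pi)\big)$ where
\[
W_k(\pi)=\{\mathrm{wt}(D):\ D\text{ a pipe dream of }\pi,\ |D|=\ell(\pi)+k\}\subseteq\mathbb{Z}_{\ge0}^n,
\]
and $\mathfrak{G}_\pi^{(k)}$ has polytopal support iff every lattice point of $\conv(W_k(\pi))$ lies in $W_k(\pi)$. So it suffices to prove: \textbf{(M)} for each $\pi$ and $k$, the set $W_k(\pi)$ is \emph{M-convex} (all coordinate sums equal $\ell(\pi)+k$, and the symmetric exchange axiom holds); and \textbf{(N)} a compatibility between consecutive components, e.g.\ every $\alpha\in W_k(\pi)$ is dominated coordinatewise by some $\beta\in W_{k+1}(\pi)$. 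By Murota's theory, (M) is exactly equivalent to $\conv(W_k(\pi))$ being a generalized permutahedron whose lattice points are precisely $W_k(\pi)$, which yields both claims for every homogeneous component; (N) then glues the components, forcing $\mathrm{Newton}(\mathfrak{G}_\pi)$ to be integrally closed and hence giving polytopal support of $\mathfrak{G}_\pi$ itself.

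The most promising route to (M) is through local moves on pipe dreams. For $k=0$ (reduced pipe dreams), Bergeron--Billey's chute and ladder moves connect all pipe dreams of $\pi$, and each move changes the weight by $e_i-e_{i'}$ for nearby rows; stringing such moves together should produce, for any $\alpha,\beta\in W_0(\pi)$ and any $i$ with $\alpha_i>\beta_i$, a move sequence out of $D$ whose net effect decreases row $i$ by one and a matching one into $D'$, verifying the exchange axiom and recovering the Schubert case for all $\pi$ --- hence Theorem~\ref{theoremC} without the dominance hypothesis. For $k>0$ one needs analogous ``$K$-chute'' and ``$K$-ladder'' moves on non-reduced pipe dreams: local modifications transferring a single $+$ between rows while preserving both the permutation and the total number of $+$'s. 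The crux is to show such moves suffice --- equivalently, that the graph on pipe dreams of $\pi$ with a fixed number of $+$'s, whose edges are single-$+$ transfers, is connected and that traversing it realizes arbitrary crossing-transfers between rows. This connectivity-and-gradedness statement for the pipe-dream complex of an arbitrary permutation is, I expect, the main obstacle; a proof of it (together with the enlargement property (N)) would yield the full conjecture, subsuming Theorems~\ref{theoremB} and~\ref{theoremC}.

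Two alternatives are worth pursuing in parallel. First, an operator recursion: from $\mathfrak{G}_{w_0}=x_1^{n-1}x_2^{n-2}\cdots$ and $\mathfrak{G}_\pi=\pi_r\,\mathfrak{G}_{\pi s_r}$ when $\ell(\pi s_r)<\ell(\pi)$, with $\pi_r f=\partial_r\big((1-x_{r+1})f\big)$, one would show that the class of polynomials all of whose homogeneous components have generalized-permutahedron Newton polytopes with polytopal support is closed under each length-increasing $\pi_r$; the delicate point is that $\pi_r$ blends two consecutive degrees, so one must control how $\partial_r$ and multiplication by $(1-x_{r+1})$ act on the relevant polytopes and their lattice points (even the closure of the Schubert class under $\partial_r$ is a nontrivial prerequisite). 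Second, an extension of this paper's machinery: for $\pi=1\pi'$ with $\pi'$ dominant, Theorem~\ref{theoremC} works because $\mathfrak{G}_\pi$ is literally a left-degree polynomial $L_G(\bm{t})$ to which Theorem~\ref{theoremB} applies; to reach general $\pi$ one would need to realize a much larger family of Grothendieck polynomials as left-degree polynomials of flow polytopes dissected by the subdivision algebra --- perhaps via graphs read off from ladder-diagram data of $\pi$ --- and then invoke Theorem~\ref{theoremB}. The obstruction there is that it is not known which Grothendieck polynomials occur as left-degree polynomials, so that route demands a genuine generalization of both the Escobar--M\'esz\'aros dictionary and the subdivision-algebra dissections. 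On balance, Route~A seems the likeliest to go through, with essentially all the difficulty concentrated in the pipe-dream connectivity/gradedness theorem.
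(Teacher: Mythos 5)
The statement you are addressing is not proved in the paper at all: it appears there as a \emph{conjecture} (formulated jointly with Alex Fink, as a strengthening of Monical--Tokcan--Yong's Conjecture 5.5), supported only by computer verification for all $\pi\in S_n$ with $n\leq 8$ and by the proof of the special case $\pi=1\pi'$ with $\pi'$ dominant (Theorem~\ref{theoremC}, via the flow-polytope/left-degree machinery). So there is no proof in the paper to compare against, and your submission is likewise not a proof: it is a reduction of the conjecture to statements (M), (N) and a pipe-dream connectivity/exchange claim that you yourself flag as unproven. The reduction step you do carry out is sound and worth recording: since the sign $(-1)^{|D|-\ell(\pi)}$ in the pipe-dream expansion depends only on the degree, there is no cancellation, so $\mathrm{Newton}(\mathfrak{G}_\pi^{(k)})=\conv(W_k(\pi))$, and by Murota's theory M-convexity of $W_k(\pi)$ is equivalent to $W_k(\pi)$ being exactly the lattice points of an integral generalized permutahedron. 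But the entire content of the conjecture is then concentrated in (M), and ``chute/ladder moves should realize the symmetric exchange axiom'' is a restatement of the difficulty, not an argument: connectivity of (reduced or non-reduced) pipe dreams under local moves does not by itself produce, for given $D,D'$ and a coordinate $i$ with $\mathrm{wt}(D)_i>\mathrm{wt}(D')_i$, a single move whose net weight change is $e_j-e_i$ with $\mathrm{wt}(D)_j<\mathrm{wt}(D')_j$; that is a genuinely stronger structural claim and no proof sketch is offered.

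A second, smaller gap: even granting (M) for every $k$, your condition (N) (each $\alpha\in W_k$ is coordinatewise dominated by some $\beta\in W_{k+1}$) does not obviously yield polytopal support of $\mathfrak{G}_\pi$ itself. What is needed is that the slice $\mathrm{Newton}(\mathfrak{G}_\pi)\cap\{\sum_i x_i=\ell(\pi)+k\}$ equals $\conv(W_k(\pi))$ for every $k$ (this is the analogue of Corollary~\ref{hyperpl} in the paper); a lattice point of the big polytope in degree $\ell(\pi)+k$ can a priori arise as a convex combination of points from several layers and need not lie in $\conv(W_k(\pi))$ without a further argument. Your third route --- realizing more Grothendieck polynomials as left-degree polynomials and invoking Theorem~\ref{theoremB} --- is exactly the mechanism the paper uses for the dominant case, and you correctly identify why it does not extend as stated; but as it stands your proposal establishes only the (correct) framing of the problem, not the conjecture.
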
  

	Since   \cite{pipe1}  uses right-degree sequences and right-degree polynomials instead of their left-degree counterparts, we will adopt this convention throughout this section. To simplify notation, all graphs in this section will be on the vertex set  $[n+1]$. Note the following easy relation between right-degree and left-degree. 
	
	Given a graph $G$ on vertex set $[n+1]$, let $G^*$ be the mirror image of the graph $G$ with vertex set shifted to $[0,n]$. More formally, let $G^*$ be the graph on vertices $[0,n]$ with edges
	\[E(G^*)=\{(n+1-j,n+1-i) \mid  (i,j) \in E(G) \}.\]
	The right-degree sequences of $G$ are exactly the left-degree sequences of $G^*$ read backwards. Via Theorem \ref{theoremA} of Section \ref{sec6} in hand, we define the \newword{right-degree multiset} $\RD(G)$ as the multiset of right-degree sequences of leaves in any reduction tree of $G$, and $\RD(G,\emptyset)$ the submultiset of sequences whose components sum to $\#E(G)$ (notation consistent with $\LD(G,F)$ in Definition \ref{ldsequencesfromF}).
	
	\begin{definition}
		\label{def:RG}
		For any graph $G$ on $[n+1]$, define the \newword{right-degree polynomial} $R_G$ by
		\[
			R_G(t_1,t_2,\ldots t_{n}) = L_{G^*}(t_{n}, t_{n-1}, \ldots, t_1)=\sum_{\alpha\in \RD(G)} (-1)^{\mathrm{codim}(\alpha)} t_1^{\alpha_1} t_2^{\alpha_2} \ldots t_{n}^{\alpha_{n}}
		\] 
		where $\mathrm{codim}(\alpha)=\#E(G)-\sum_{i=1}^{n}{\alpha_i}$.

		\noindent For $k\geq 0$, let $R_G^k(\bm{t})$ denote
		 the degree $\#E(G)-k$ homogeneous component of $R_G(\bm{t})$.
		
		Define the reduced right-degree polynomial $\widetilde{R}_G$ as follows: If $\{v_{i_1},\ldots v_{i_k}  \}$ are the vertices of $G$ with positive outdegree, then $R_G$ is a polynomial in $t_{i_1},\ldots, t_{i_{k}}$. Obtain $\widetilde{R}_G$ by relabeling the variables $t_{i_{m}}$ by $t_m$ for each $m$. Note that $R_G^0$ (resp. $\widetilde{R^0_G}$) is the top homogeneous component of $R_G$ (resp. $\widetilde{R}_G$), and is given by 
		\[R_G^0(t_1,\ldots,t_{n})=\sum_{\alpha\in \RD(G,\emptyset)}  t_1^{\alpha_1} t_2^{\alpha_2} \ldots t_{n}^{\alpha_{n}}\]
	\end{definition}
	                 
The following statement collects the right-degree analogues of Corollary \ref{cor:LGSNP} and Theorem \ref{ldhomogeneouspiecesnewton} from the previous section.
\begin{theorem}
	\label{rdanaloguecollection}
	Let $G$ be a graph on $[n+1]$. Then, $R_G(\bm{t})$ has SNP, and the Newton polytope of each homogeneous component $R_G^k$ is a generalized permutahedron. Additionally, if $G$ is a tree, then $R_G^0(\bm{t})$ equals the integer point transform of its Newton polytope.
\end{theorem}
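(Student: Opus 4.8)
The plan is to reduce Theorem~\ref{rdanaloguecollection} entirely to the left-degree results already proved, since $R_G$ is by definition nothing more than $L_{G^*}$ with the variables reversed. First I would recall that reversing the order of variables is an invertible affine map on $\R^n$ that carries lattice points to lattice points; hence it sends $\mathrm{Newton}(L_{G^*}(\bm{t}))$ to $\mathrm{Newton}(R_G(\bm{t}))$ and preserves the property of having polytopal support, the property of having all nonzero coefficients equal to $1$, and (being a coordinate permutation) the property of being a generalized permutahedron. Similarly the variable reversal sends the degree $\#E(G^*)-k$ homogeneous component $L_{G^*}^k(\bm{t})$ to the degree $\#E(G)-k$ homogeneous component $R_G^k(\bm{t})$, because $\#E(G)=\#E(G^*)$ and reversing variables does not change total degree.

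With that dictionary in place, the three claims follow one by one. For polytopal support of $R_G$: by Theorem~\ref{newtonofldpolynomial}, $L_{G^*}$ has polytopal support, and applying the variable-reversing substitution gives the same statement for $R_G$. For the homogeneous components: by Theorem~\ref{ldhomogeneouspiecesnewton}, $\mathrm{Newton}(L_{G^*}^k(\bm{t}))=P_n^z\{z_I^{(k)}\}_{I\subseteq[n]}$ is a generalized permutahedron; since the image of a generalized permutahedron under a permutation of coordinates is again a generalized permutahedron (the defining inequalities $\sum_{i\in I}t_i\ge z_I$ are simply relabeled), $\mathrm{Newton}(R_G^k(\bm{t}))$ is a generalized permutahedron, and polytopal support transfers as well. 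For the acyclic case: note that $G$ is acyclic if and only if $G^*$ is acyclic (mirroring preserves acyclicity); then Theorem~\ref{ldhomogeneouspiecesnewton} says $L_{G^*}^0(\bm{t})$ is the integer-point transform of its Newton polytope, i.e.\ every lattice point of $\mathrm{Newton}(L_{G^*}^0)$ occurs with coefficient exactly $1$, and this property is obviously preserved under the variable reversal, giving the statement for $R_G^0$.

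There is essentially no obstacle here beyond careful bookkeeping: one must make sure the ``reduced'' bookkeeping at the level of $\widetilde{R}_G$ versus $R_G$ is only a relabeling of variables that are genuinely absent (the variables $t_i$ for vertices $i$ of outdegree zero never appear), so passing between $R_G$ and $\widetilde R_G$ also preserves all three properties. The one place to be slightly careful is the indexing convention: in this section graphs live on $[n+1]$ while in Section~\ref{sec4} they live on $[0,n]$, and the operation $G\mapsto G^*$ is exactly the shift-and-mirror that reconciles the two conventions, after which the right-degree sequences of $G$ are the left-degree sequences of $G^*$ read backwards. Once this correspondence is stated cleanly, the proof is a one-line invocation of Theorems~\ref{newtonofldpolynomial} and~\ref{ldhomogeneouspiecesnewton} for each of the three assertions.
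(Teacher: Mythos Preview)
Your proposal is correct and matches the paper's approach exactly. The paper does not give a separate proof of Theorem~\ref{rdanaloguecollection}; it merely introduces the statement as ``collect[ing] the right-degree analogues of Theorem~\ref{newtonofldpolynomial} and Theorem~\ref{ldhomogeneouspiecesnewton},'' so the argument you spell out---transport everything through the coordinate-reversing identification $R_G(t_1,\ldots,t_n)=L_{G^*}(t_n,\ldots,t_1)$ and note that a coordinate permutation preserves polytopal support, the generalized-permutahedron property, and the integer-point-transform property---is precisely what the reader is meant to supply. Your aside about $\widetilde{R}_G$ is unnecessary for this particular theorem (which concerns $R_G$, not $\widetilde{R}_G$), but it does no harm.
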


We now recall the definition of pipe dreams of a permutation and the characterization of Schubert and Grothendieck polynomials in terms of pipe dreams.
\begin{definition}
A \newword{pipe dream} for $\pi\in S_{n+1}$ is a tiling of an $(n+1)\times (n+1)$ matrix with two tiles, crosses $\textcross$ and elbows $\textelbow$, such that
\begin{enumerate}
	\item all tiles in the weak south-east triangle are elbows, and
	\item if we write $1,2,\ldots, n+1$ on the top and follow the strands (ignoring second crossings among the same strands), they come out on the left and read $\pi$ from top to bottom.
\end{enumerate}
A pipe dream is \newword{reduced} if no two strands cross twice.
\end{definition}

\begin{figure}[ht]
	\includegraphics[scale=.4]{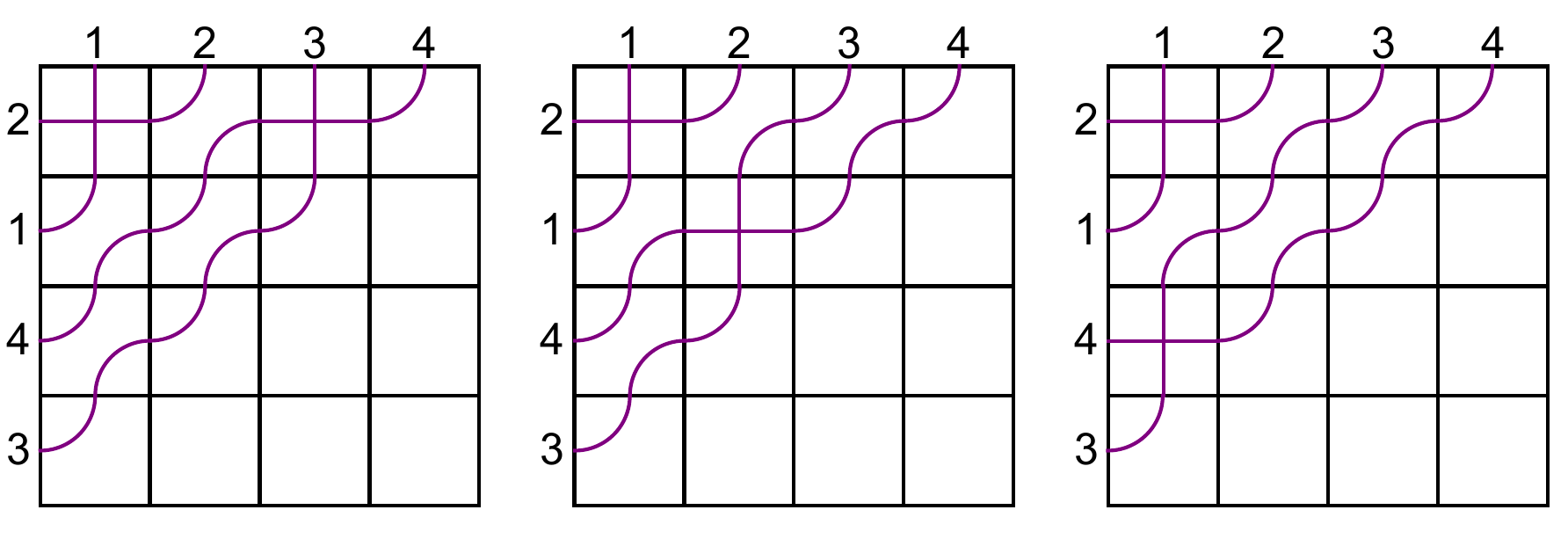}
	\caption{The reduced pipe dreams for $\pi=2143$. All tiles not shown are elbows.}
\end{figure}

	For $\pi\in S_{n+1}$ let $\mathrm{PD}(\pi)$ denote the collection of all pipe dreams of $\pi$ and $\mathrm{RPD}(\pi)$ the collection of all reduced pipe dreams of $\pi$. For $P\in \mathrm{PD}(\pi)$, define the weight of $P$ by
	\[wt(P)=\prod_{(i,j)\in \mathrm{cross}(P)}t_i\]
	where $\mathrm{cross}(P)$ denotes the set of indices of all crosses in $P$.
	
	Recall that for any $\pi\in S_{n+1}$, the Grothendieck polynomial $\mathfrak{G}_\pi$ can be represented in terms of pipe dreams of $\pi$ by
	\begin{align*}
		\mathfrak{G}_\pi(t_1,\ldots, t_{n}) = \sum_{P\in \mathrm{PD}(\pi)}{wt(P)}, 
	\end{align*}
	and the Schubert polynomial $\mathfrak{S}_\pi$ is the lowest degree homogeneous component of the Grothendieck polynomial:
	\[\mathfrak{S}_\pi(t_1,\ldots,t_{n})=\sum_{P\in \mathrm{RPD}(\pi)}{wt(P)}.\]
	
	In \cite[Theorem 5.1]{pipe1}, it is proved that for any noncrossing tree $T$, the right-degree sequences $\RD(T)$ (see paragraph preceding Definition \ref{def:RG}) are independent of the choice of reduction tree for $T$, and the following connection to Grothendieck polynomials is shown.
	\begin{theorem}[{\cite[Theorem 5.3]{pipe1}}]
		\label{relatingrdandschub}
		Let $\pi\in S_{n+1}$ be of the form $\pi=1\pi'$ where $\pi'$ is a dominant permutation of $\{2,3,\ldots n+1\}.$ Then, there is a tree $T(\pi)$ and nonnegative integers $g_i=g_i(\pi)$ such that 
		\[\widetilde{R}_{T(\pi)}(\bm{t}) =\left (\prod_{i=1}^{n}{t_i^{g_i}} \right )\mathfrak{G}_{\pi}(t_1^{-1},\ldots, t_{n}^{-1}). \]
		
		\noindent Explicitly, if $C(\pi)$ denotes the set $\mathrm{core}(\pi)\cup \{(1,1)\}$, then $g_i(\pi)$ is the number of boxes in column $i$ of $C(\pi)$.
	\end{theorem}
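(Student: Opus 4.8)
\emph{Proof proposal.} The plan is to prove the identity by producing a weight- and sign-preserving bijection between the pipe dreams of $\pi$ and the leaves of a carefully chosen reduction tree of $T(\pi)$, so that the right-degree sequence of a leaf records the row-cross vector of the corresponding pipe dream. First I would read the tree $T(\pi)$ off the shape $C(\pi)=\mathrm{core}(\pi)\cup\{(1,1)\}$. Because $\pi'$ is dominant, $D(\pi')$ is a Young diagram, so $C(\pi)$ is a single connected staircase-type region and is determined by its column lengths; let $g_i$ be the length of its $i$-th column. I would define $T(\pi)$ on the vertex set $[n+1]$ so that each box of $C(\pi)$ contributes exactly one edge, arranged so that the graph is a tree and so that (after the mirror $G\mapsto G^{*}$ built into the definition of $R_G$) the right-degree of vertex $i$ in $T(\pi)$ itself equals $g_i$. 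This makes the right-hand side of the asserted identity a polynomial in exactly the variables occurring in $\widetilde{R}_{T(\pi)}$, and it pins down the bookkeeping: a leaf $L$ whose right-degree sequence is $(\mathrm{rd}_L(1),\dots,\mathrm{rd}_L(n))$ should correspond to a pipe dream carrying $g_i-\mathrm{rd}_L(i)$ crosses in row $i$. Since a pipe dream of $1\pi'$ holds at most $g_i$ crosses in row $i$, all of these exponents are nonnegative, so the statement really is a polynomial identity and not just a Laurent one.

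The heart of the argument is the bijection. I would build a specific reduction tree $\mathcal{R}(T(\pi))$ by eliminating the edges coming from the boxes of $C(\pi)$ in a fixed order, and then prove by induction on the number of reductions performed that the three children $G_1,G_2,G_3$ obtained by reducing on the edge attached to a box $b$ match the three possibilities for $b$ when a partial pipe dream is grown by a ladder/chute move --- leave $b$ empty ($G_1$), place a cross at $b$ ($G_2$), or record a double crossing at $b$ ($G_3$). The inductive hypothesis carries along the identification of the current graph's right-degree data with the row-cross data of the corresponding partial pipe dream; the base case is the unique reduced pipe dream of the dominant part, whose crosses occupy exactly $\mathrm{core}(\pi)$. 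Pushed to the leaves, this yields a bijection $\Phi$ from the leaves of $\mathcal{R}(T(\pi))$ onto $\mathrm{PD}(\pi)$ with $\mathrm{rd}_L(i)=g_i-c_i(\Phi(L))$, where $c_i(P)$ is the number of crosses in row $i$ of $P$; the full-dimensional leaves map onto $\mathrm{RPD}(\pi)$ and the lower-dimensional leaves --- carrying the sign $(-1)^{\mathrm{codim}}$ --- onto the non-reduced pipe dreams, matching the $(-1)^{\#\mathrm{cross}(P)-\ell(\pi)}$ weights. Transporting $wt$ along $\Phi$ and re-indexing then turns $\widetilde{R}_{T(\pi)}(\bm{t})$ into $\bigl(\prod_{i=1}^{n}t_i^{g_i}\bigr)\mathfrak{G}_\pi(t_1^{-1},\dots,t_n^{-1})$, and $g_i$ is by construction exactly the number of boxes in column $i$ of $C(\pi)$.

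I expect the main obstacle to be making the bijection $\Phi$ rigorous, and there are two delicate points. First, well-definedness: the order in which one reduces the edges of $T(\pi)$ must not change the resulting multiset of right-degree sequences; this is precisely Theorem~\ref{theoremA} (or one can sidestep it by committing to a single convenient reduction order and working only with that tree). Second, and more seriously, surjectivity: every pipe dream of $1\pi'$, reduced or not, must be obtained exactly once. Surjectivity is where the dominance of $\pi'$ is used essentially --- it is what forces $C(\pi)$ to be a single Young-diagram-like region, so that the ``one box at a time'' recursive generation of the pipe dreams of $1\pi'$ lines up cleanly with the three-fold branching structure of the subdivision-algebra reductions; without dominance the shape $C(\pi)$ breaks into pieces and the recursion no longer matches.
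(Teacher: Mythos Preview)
This theorem is not proved in the present paper at all: it is quoted verbatim from \cite{pipe1} (as Theorem~5.3 there) and used as a black box in the proof of Theorem~\ref{theoremC}. So there is no ``paper's own proof'' to compare your proposal against.

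That said, your sketch is in the right spirit and is essentially the strategy of \cite{pipe1}: one constructs $T(\pi)$ from the core region $C(\pi)$, and then exhibits a bijection between the leaves of a particular reduction tree of $T(\pi)$ and the pipe dreams of $\pi$, under which right-degree sequences correspond to complementary row-cross vectors. Your identification of the two delicate points is accurate. The first (independence of the reduction order) is handled in \cite{pipe1} for trees directly, and in the present paper is subsumed by Theorem~\ref{theoremA}. The second (that every pipe dream of $1\pi'$ is hit exactly once) is indeed where dominance of $\pi'$ enters, and in \cite{pipe1} this is carried out not by an ad hoc ladder/chute induction as you suggest, but via the interpretation of reduced pipe dreams as facets of a subword complex and the identification of that complex with the canonical triangulation of the relevant root/flow polytope; the non-reduced pipe dreams then correspond to interior faces. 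Your box-by-box matching of the three reduction outcomes $G_1,G_2,G_3$ to ``elbow / cross / double-crossing'' is morally correct but would need substantial work to make rigorous on its own; the subword-complex machinery in \cite{pipe1} is what packages this cleanly.
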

	
	\noindent In terms of Newton polytopes, Theorem \ref{relatingrdandschub} implies  
	\begin{align*}
		\mathrm{Newton}\left(\mathfrak{G}_{\pi}\right )=\varphi\left (\mathrm{Newton}\left(\widetilde{R}_{T\left(\pi\right )}\left(\bm{t}\right )\right)\right )
	\end{align*}
	and 
	\begin{align*}
	\mathrm{Newton}\left(\mathfrak{S}_{\pi}\right )=\varphi\left (\mathrm{Newton}\left(\widetilde{R}^0_{T\left(\pi\right )}\left(\bm{t}\right )\right)\right )
	\end{align*}
	\noindent where $\varphi$ is the integral equivalence
	\[(x_1,\ldots,x_{n}) \mapsto \left (g_1-x_1,\ldots,g_{n}-x_{n}\right ).\]

	\begin{proof}[Proof of Theorem \ref{theoremC}]
		By Theorem \ref{rdanaloguecollection}, right-degree polynomials $R_G(\bm{t})$ have SNP. Since $\mathrm{Newton}\left(\widetilde{R}_{T\left(\pi\right )}\right )$ is the image of  $\mathrm{Newton}\left(R_{T\left(\pi\right )}\right )$ by a projection forgetting coordinates that are always zero, it follows from Theorem \ref{relatingrdandschub} that $\mathfrak{G}_{\pi}$ has SNP. 
		
		Theorem \ref{rdanaloguecollection} and Theorem \ref{relatingrdandschub} also yield that each homogeneous component of  $\mathfrak{G}_{\pi}$ has SNP and that their Newton polytopes are generalized permutahedra. In particular, this holds for the Schubert polynomial. Since by \cite{pipe1} the Schubert polynomial of $\pi=1\pi'$, where $\pi'$ is a dominant permutation, has $0,1$ coefficients, the last statement also follows.
	\end{proof}
	
	From the proof of Theorem \ref{relatingrdandschub} in \cite{pipe1}, one can   infer the following new transition rule for Schubert polynomials of permutations of the form $1\pi'$ with $\pi'$ dominant.  
	
	\begin{lemma}[Transition rule for $1\pi'$ Schubert polynomials] Let $\pi\in S_{n+1}$ be of the form $\pi=1\pi'$ with $\pi'$ a dominant permutation of $\{2,\ldots,n+1\}$. Let $\pi'$ have diagram given by the partition $\lambda(\pi')=(\lambda_1,\cdots, \lambda_z)$ with $\lambda_z=k$. For $0\leq l \leq k$, let $w_l$ be the permutation on $\{2,\ldots,n+1\}$ whose diagram is the partition  $(\lambda_1-(k-l), \ldots, \lambda_{z-1}-(k-l))$. Then
		\[
		\mathfrak{S}_{\pi}(\bm{x})= \sum_{l=0}^{k}{\left (\prod_{m=1}^{l}{x_m}  \right ) \left (\prod_{p=l+2}^{k+1}{x_p^z}  \right ) \mathfrak{S}_{1w_l}(\bm{x}_{\phi_l})}
		\]
		where $\bm{x}=(x_1,x_2,\ldots)$, $\bm{x}_{\phi_l}=(x_{\phi_l(1)},x_{\phi_l(2)},\ldots)$, and $\phi_l(i)=\begin{cases}
		i &\mbox{ if } i\leq l+1\\
		i+k-l &\mbox{ if } i\geq l+2
		\end{cases}$
	\end{lemma}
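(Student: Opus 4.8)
The plan is to transport the identity into the world of right-degree polynomials, where Section~\ref{sec4} supplies geometric control, and then to read the recursion off the explicit construction of the tree $T(\pi)$ from \cite{pipe1}. By Theorem~\ref{relatingrdandschub} together with the Newton-polytope reformulation following it, $\mathfrak{S}_{\pi}$ is, up to the affine substitution $\varphi$ and the relabeling of variables built into $\widetilde{R}$, the top homogeneous component $\widetilde{R}^0_{T(\pi)}$; and by Theorem~\ref{rdanaloguecollection} (valid since $T(\pi)$ is a tree, hence acyclic) that component is the integer-point transform of $\conv(\RD(T(\pi),\emptyset))$. So it suffices to produce the claimed $(k+1)$-term decomposition at the level of the lattice-point set $\RD(T(\pi),\emptyset)$. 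First I would unwind the shape of $T(\pi)$: because $\pi=1\pi'$ with $\pi'$ dominant of shape $\lambda=(\lambda_1,\dots,\lambda_z)$, the core $C(\pi)$ is $\{(1,1)\}$ together with the Young diagram of $\lambda$ pushed into rows and columns $\geq 2$, and $T(\pi)$ is assembled in \cite{pipe1} from the graph of $C(\pi)$ by adjoining the rows of $\lambda$ one at a time; the last row adjoined has length $\lambda_z=k$.

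Next I would perform, in the special reduction order of Lemma~\ref{cornerstonelemma}, exactly the batch of reductions in $T(\pi)$ that involve the $k$ edges produced by that last row. Applying Lemma~\ref{cornerstonelemma} at the vertices this row creates, these reductions split the full-dimensional leaves of $T(\pi)$ into $k+1$ families, which I would index by $l\in\{0,1,\dots,k\}$ by how many of the last-row edges survive. Each family should then match, in a degree-preserving fashion obtained by contracting the pertinent edges of the graph $\gr$ and invoking Lemma~\ref{contraction}, the full-dimensional leaves of $T(1w_l)$, where $w_l$ is precisely the permutation whose diagram is that of $\pi'$ with the last row deleted and the first $z-1$ rows shortened by $k-l$ --- the $w_l$ of the statement, modulo the conjugation of shapes forced by \cite{pipe1}'s row/column conventions. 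Tracking right-degrees across this matching, the edges and vertices absorbed in the $l$-th family contribute a fixed increment to every $\alpha$ in that family, and after composing with $\varphi$, the $\widetilde{R}$-relabeling, and the inversion $x_i\mapsto x_i^{-1}$ this increment becomes exactly the monomial $(\prod_{m=1}^{l}x_m)(\prod_{p=l+2}^{k+1}x_p^{z})$, while the surviving coordinates get reindexed by $\phi_l$, the shift that makes room for the $k-l$ "pure-monomial" columns $l+2,\dots,k+1$. Summing over the $k+1$ families and translating back yields
\[
\mathfrak{S}_{\pi}(\bm{x})=\sum_{l=0}^{k}\left(\prod_{m=1}^{l}x_m\right)\left(\prod_{p=l+2}^{k+1}x_p^{z}\right)\mathfrak{S}_{1w_l}(\bm{x}_{\phi_l}),
\]
and a degree count (both sides homogeneous of degree $|\lambda|=\ell(\pi)$) serves as a built-in consistency check.

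I expect the real work --- and the main obstacle --- to be the bookkeeping, which is delicate in three spots. First, saying which reductions of $T(\pi)$ "belong to the last row" requires opening up the inductive construction of $T(\pi)$ in \cite{pipe1} rather than using Theorem~\ref{relatingrdandschub} as a black box. Second, one must check at the graph level that contracting the last-row edges really does turn the relevant subtree of $T(\pi)$ into $T(1w_l)$ with no extra relations introduced; this is where Lemma~\ref{contraction} and the structure of $\gr$ carry the argument. Third, pinning down the two monomial exponents and the index shift $\phi_l$ amounts to composing the mirror $G\mapsto G^{*}$, the $\widetilde{R}$-relabeling, the column heights $g_i$ of $C(\pi)$, and the inversion of variables, a chain of reindexings in which an off-by-one is very easy to commit; in particular one must be careful whether a part-$m$ row of a "diagram" in \cite{pipe1} means a length-$m$ row or a height-$m$ column of the Rothe diagram. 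I would guard against these by verifying the formula by hand on small cases (for instance $\pi=132$, $\pi=1432$, and $\pi=14523$) before writing the general proof, and, once a candidate identity is in hand, cross-checking it against the Lascoux--Sch\"utzenberger transition recursion for $\mathfrak{S}_{\pi}$.
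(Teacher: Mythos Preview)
Your proposal is in line with what the paper does, but note that the paper does not actually spell out a proof of this lemma: it is stated immediately after Theorem~\ref{theoremC} with the single sentence ``From the proof of Theorem~\ref{relatingrdandschub} in \cite{pipe1}, one can infer the following new transition rule\ldots'' and is then illustrated by an example. So there is no detailed argument in the paper to compare against; your plan---open up the inductive construction of $T(\pi)$ in \cite{pipe1}, isolate the reductions corresponding to the last row of $\lambda(\pi')$, and read off the $(k{+}1)$-term splitting via Lemma~\ref{cornerstonelemma}---is precisely the route the paper is pointing to, and the bookkeeping concerns you flag (the mirror $G\mapsto G^*$, the $\widetilde{R}$-relabeling, the exponents $g_i$, and the row/column conventions of \cite{pipe1}) are exactly the places where the work lies.
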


	\begin{example}
		Let $\pi=14523$. Then, $\pi'=4523$, so $\lambda(\pi')=(2,2)$. For $0\leq l\leq 2$,the permutation $w_l$ will have diagram given by the partition $(l)$. These permutations are $w_0=2345$, $w_1=3245$, and $w_2=3425$. 
		Hence, the terms in the transition rule are
		\begin{align*}
			&(1)(x_2^2x_3^2)\mathfrak{S}_{1w_0}(x_1,x_4,x_5,x_6) = x_2^2x_3^2 \\
			&(x_1)(x_3^2)\mathfrak{S}_{1w_1}(x_1,x_2,x_4,x_5) = x_1^2 x_3^2  + x_1 x_2 x_3^2\\
			&(x_1x_2)(1)\mathfrak{S}_{1w_2}(x_1,x_2,x_3,x_4) = x_1^2  x_2^2 + x_1^2  x_2 x_3 + x_1 x_2^2  x_3.
		\end{align*}
		
		\noindent Adding these terms together gives the expected polynomial
		\begin{align*}
			\mathfrak{S}_{\pi}(x_1,x_2,x_3,x_4) = x_1^2  x_2^2  + x_1^2  x_2 x_3 + x_1 x_2^2  x_3 + x_1^2  x_3^2  + x_1 x_2 x_3^2  + x_2^2  x_3^2.
		\end{align*}
	\end{example}
	
	\section{Left-degree sequences as invariants}
	\label{sec6}
	In this section we prove the results of Section \ref{sec3} without the assumption that $G$ is simple. Similar adjustments can be made to generalize Sections \ref{sec4} and \ref{sec5} away from simple graphs. In this generality, we also prove the following main result.
	\begin{namedtheorem}[A]
		Let $G$ be any graph on $[0,n]$. Then for any reduction tree $\mathcal{R}$ of $G$,
		\[\mathrm{LD}(G)=\inseq(\mathcal{R}).\]
	\end{namedtheorem}
	Theorem \ref{theoremA} was first proved independently by Grinberg \cite{grinberg}. 	
	To deal with multiple edges in $E(G)$, we view each element of $E(G)$ as being distinct. Formally, we may think of assigning a distinguishing number to each copy of a multiple edge. In this way, we may speak of subsets $F\subseteq E(G\backslash 0)$ in the usual sense. 	
		
	For $G$ any graph on the vertex set $[0,n]$, we can still construct the reduction tree $\mathcal{T}(G)$ using the same algorithm as before in Definition \ref{specialreductiontree}. As in the case of simple graphs, the leaves of this specific reduction tree can be encoded as solutions to some constraint arrays. The key is using a generalized version of Lemma \ref{cornerstonelemma} with multiple incoming and outgoing edges at vertex $v$. This generalization is derived the same way and is not harder, but far more technical. The arrays we obtain are no longer necessarily triangular, but rather they may be staggered. This is explained below and demonstrated in Examples \ref{generalizedtriarrayexample} and \ref{ex:F}. We leave the proofs to the interested reader; they are straightforward generalizations of those in the previous section. With $\mathcal{T}(G)$ in hand, $\LD(G)$ is defined exactly as before.
	
	We now describe how to define the arrays $\Tri_G(\emptyset)$. Start with the array constructed for simple graphs in Definition \ref{def:triGempty}. Replace each $a_{ij}$ by $a_{ij}^{(1)}$ in Definition \ref{def:triGempty} and Theorem \ref{arrayconstraints}. Add variables $a_{ij}^{(k)}$ with $k>1$ for each additional copy of the edge $(j,i)$ appearing in $G$. When there are $k>1$ copies of the edge $(j,i)\in E(G)$, also replace $a^{(1)}_{ij}\leq a^{(1)}_{i-1,j}$ in the constraint array by $a^{(1)}_{ij}\leq a^{(2)}_{ij}\leq \cdots \leq a^{(k)}_{ij}\leq a^{(1)}_{i-1,j}$. The following example demonstrates these changes.
	
	\begin{example}
		\label{generalizedtriarrayexample}
		Following Example \ref{triangulararrayofconstraintsexample}, if $G$ is the graph on vertex set $[0,4]$ with \[E(G)=\{(0,1), (0,1), (0,2),(1,2),(1,2), (2,3),(2,4),(3,4),(3,4) \},\] we obtain the constraints
		\begin{align*}
				&0\leq a_{41}^{(1)}=a_{31}^{(1)}= a_{21}^{(1)}\leq a_{21}^{(2)}\leq a^{(1)}_{11}=2\\
				&0\leq a_{42}^{(1)}\leq a_{32}^{(1)}\leq a_{22}^{(1)}=5-a^{(1)}_{21}\\
				&0\leq a_{43}^{(1)}\leq a_{43}^{(2)} \leq a_{33}^{(1)}= 6-a_{31}^{(1)}-a_{32}^{(1)}\\
				&0\leq a_{44}^{(1)}=9-a_{41}^{(1)}-a_{42}^{(1)}-a_{43}^{(1)}
		\end{align*}
	\end{example}
	
	Defining $\Tri_G(F)$ for arbitrary $G$ is requires analogous modifications. View $E(G)$ as a multiset, so we formally view each copy of a multiple edge $(j,i)$ as a distinct element. Let $F$ vary over subsets of $E(G\backslash 0)$, and define $\Tri_G(F)$ from (the general version of) $\Tri_G(\emptyset)$  as before using the numbers $f_{ij}$ of (\ref{fijnumbers}) and treating each $a_{ij}^{(m)}$ identically for different $m$. 
	
	\begin{example} \label{ex:F}
		With $G$ as in Example \ref{generalizedtriarrayexample} and $F=\{(1,2),(1,2),(2,3)\}$, the array $\Tri(F)$ is given by 
		\begin{align*}
				&2\leq a_{41}^{(1)}+2=a_{31}^{(1)}+2= a_{21}^{(1)}+2\leq a_{21}^{(2)}+2\leq a^{(1)}_{11}=2\\
				&1\leq a_{42}^{(1)}+1\leq a_{32}^{(1)}+1\leq a_{22}^{(1)}=3-a^{(1)}_{21}\\
				&0\leq a_{43}^{(1)}\leq a_{43}^{(2)} \leq a_{33}^{(1)}= 3-a_{31}^{(1)}-a_{32}^{(1)}\\
				&0\leq a_{44}^{(1)}=6-a_{41}^{(1)}-a_{42}^{(1)}-a_{43}^{(1)}
		\end{align*}
	\end{example}
	Using the definition of $\Tri_G(F)$ for arbitrary graphs $G$, we can extend the definitions of $\Sol_G(F)$ and $\LD(G,F)$ from simple graphs to arbitrary graphs $G$.
	As in Proposition \ref{tapisflowpolytope}, for each $F\subseteq E(G\backslash 0)$ the polytope  $\poly(\Tri_G(F))$ is integrally equivalent to the flow polytope of a graph $\gr(G)$, a straightforward generalization of Definition \ref{tagsimplefinalversion}. The proofs of Theorem \ref{isomorphism} and Corollary \ref{cor:LGSNP} then go through with minor changes. In particular, we have the following summary result.

	\begin{theorem}
	\label{finalflows}
	Let $G$ be a graph on $[0,n]$, $\rho$ be the map that takes a triangular array in any $\Sol_G(F)$ to its first column $\left(a_{n1}^{(1)}, \ldots, a_{n n}^{(1)}\right)$, and $\psi$ be the map that takes a flow on $\widetilde{G}\backslash \{s,0\}$ to the tuple of its values on the edges $\{(j,t)\mid  j\in[n] \}$. For $F\subseteq E(G\backslash 0)$, recall the netflow vector 
	\[\bm{b}_G^F=\left (\indegG(1)-\outdegF(1), \ldots , \, \indegG(n)-\outdegF(n), -\#E(G\backslash F)   \right ).\]
	Then for each $F\subseteq E(G\backslash 0)$, 
	\[\LD(G,F)=\rho\left(\SolG(F) \right)=\psi\left(\mathcal{F}_{\widetilde{G}\backslash\{s,0\}}\left(\bm{b}_G^F \right)\cap \mathbb{Z}^{\#E(\widetilde{G}\backslash\{s,0\})} \right) , \mbox{ so } \]
	\begin{align*}
		\LD(G)&=\bigcup_{F\subseteq E(G\backslash 0)}{\LD(G,F)}\\ 
		&=\bigcup_{F\subseteq E(G\backslash 0)}{\rho\left(\SolG(F) \right)}\\ 
		&=\bigcup_{F\subseteq E(G\backslash 0)}{\psi\left(\mathcal{F}_{\widetilde{G}\backslash\{s,0\}}\left(\bm{b}_G^F \right)\cap \mathbb{Z}^{\#E(\widetilde{G}\backslash\{s,0\})} \right)} 
	\end{align*}
\end{theorem}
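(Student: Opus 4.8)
The plan is to rerun, for an arbitrary (not necessarily simple) graph $G$, the whole chain of results of Section~\ref{sec3} that was carried out for simple graphs. The only structural change is that triangular constraint arrays become the staggered arrays of Examples~\ref{generalizedtriarrayexample} and~\ref{ex:F}: one records a copy $a_{i,j}^{(m)}$ of each variable for each copy of the edge $(j,i)\in E(G)$, and replaces each inequality $a_{i,j}\le a_{i-1,j}$ by a chain $a_{i,j}^{(1)}\le\cdots\le a_{i,j}^{(k)}\le a_{i-1,j}^{(1)}$. Once this machinery is in place, the three asserted equalities --- together with the displayed multiset identity --- follow by assembling pieces exactly as Corollary~\ref{flows} followed from Theorem~\ref{isomorphism} in the simple case.

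First I would establish $\LD(G,F)=\rho(\Sol_G(F))$. This rests on the arbitrary-graph version of the cornerstone lemma, Lemma~\ref{cornerstonelemma}: performing all reductions among edges incident to a distinguished vertex $v$ in the special reduction order --- now with several incoming \emph{and} several outgoing edges at $v$, each copy of a multiple edge treated as distinct --- one reads off the in/out-degree data of the resulting graphs, and this is precisely what the staggered chains encode. Feeding this into the inductive construction of the specific reduction tree $\mathcal T(G)$ (whose base case and append-vertex-$(i{+}1)$ step are literally those of Section~\ref{sec3}) yields the general form of Theorem~\ref{multisetequality}, namely $\{\Arr(L):L\text{ a leaf of }\mathcal T(G)\}=\bigcup_{F\subseteq E(G\backslash 0)}\Sol_G(F)$; passing to first columns gives $\inseq(\mathcal T(G))=\bigcup_F\rho(\Sol_G(F))$. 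Since $\LD(G,F)$ for arbitrary $G$ is, by the extension of Definition~\ref{ldsequencesfromF}, exactly the first-column image of $\Sol_G(F)$, the first equality follows, and the combinatorial path-label reading of $\LD(G,F)$ carries over verbatim.

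Next I would prove $\rho(\Sol_G(F))=\psi\big(\mathcal F_{\widetilde G\backslash\{s,0\}}(\bm b_G^F)\cap\mathbb Z^{\#E(\widetilde G\backslash\{s,0\})}\big)$ by reproducing the flow-polytope realization. As in Lemma~\ref{arraysgiveflowpolytopes}, introduce a slack variable $z_{i,j}^{(m)}$ for each inequality of the staggered array, run the same telescoping change of variables $Y_{i,j}\rightsquigarrow Z_{i,j}$, and read off the incidence matrix of a graph $\gr(G)$ --- the straightforward extension of Definition~\ref{tagsimplefinalversion}, with extra $a$- and $z$-edges for the multiple edges of $G$ --- together with a netflow vector $\bm a_G^F$; this gives $\poly(\Tri_G(F))\equiv\mathcal F_{\gr(G)}(\bm a_G^F)$. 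Then, as in Theorem~\ref{isomorphism}, the edges $a_{i,j}^{(m)}$ with $i<n$ are the unique edges into their heads, so Lemma~\ref{contraction} lets us contract them; the resulting graph is $\widetilde G\backslash\{s,0\}$, and the new netflow is obtained by summing the rows of the array defining $\bm a_G^F$, whose row-sums telescope to $\indeg_G(j)-\outdeg_F(j)$ in coordinate $j$ (using $f_{n,j}=\outdeg_F(j)$) and to $-\#E(G)+\sum_k f_{n,k}=-\#E(G\backslash F)$ at the end --- that is, exactly to $\bm b_G^F$. Throughout, the first-column coordinates $(a_{n,1}^{(1)},\ldots,a_{n,n}^{(1)})$ are carried to the coordinates on the edges $\{(j,t):j\in[n]\}$, so $\rho$ on one side matches $\psi$ on the other; since integral equivalence induces a bijection of lattice points (hence of their multiset images under coordinate projections), the equality of multisets follows. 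Taking the disjoint multiset union over all $F\subseteq E(G\backslash 0)$ and invoking $\inseq(\mathcal T(G))=\bigcup_F\LD(G,F)$ then yields the three-line display; this identity, together with the independence of the left-degree multiset from the choice of reduction tree, is what feeds into Theorem~\ref{theoremA}.

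The main obstacle is the arbitrary-graph cornerstone lemma. With both multiple incoming and multiple outgoing edges at $v$, one must keep careful track of which edge-copies are rerouted at each stage of the special reduction order, and verify that the in/out-degrees of the leaves match the staggered chains down to multiplicity. As the excerpt signals, this is not conceptually harder than the simple case, but it is considerably heavier bookkeeping; everything downstream --- the construction of $\mathcal T(G)$, the slack-variable computation, the contraction, and the final assembly --- is then a routine transcription of Section~\ref{sec3}.
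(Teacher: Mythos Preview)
Your proposal is correct and follows essentially the same approach as the paper. The paper itself gives no detailed proof of Theorem~\ref{finalflows}, stating only that ``the proofs of Theorem~\ref{isomorphism} and its Corollaries then go through with minor changes'' once one has the generalized cornerstone lemma and staggered arrays, and your outline faithfully expands on precisely this sketch, correctly identifying the multi-edge version of Lemma~\ref{cornerstonelemma} as the one place requiring genuine (if only bookkeeping-heavy) work.
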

	
	\label{finalflowsproofedition}
	In the proof of Theorem \ref{theoremA} below, it will be more convenient to use an equivalent formulation of Theorem \ref{finalflows}. 
	Instead of considering flows on $\widetilde{G}\backslash\{s,0\}$ with netflow vector $\bm{b}_G^F$, consider flows on $\widetilde{G}\backslash\{s\}$ with netflow vector $(0,\bm{b}_G^F)$, where
		
	\[(0,\bm{b}_G^F)=\left (0,\indegG(1)-\outdegF(1), \ldots , \, \indegG(n)-\outdegF(n), -\#E(G\backslash F) \right ).\] 
	\medskip
	
	Now, we use Theorem \ref{finalflows} to prove Theorem \ref{theoremA}. Before proceeding with the proof, we first recall the relevant notation introduced previously. For a graph $G$ on $[0,n]$, let $\mathcal{R}$ be any reduction tree of $G$ and $\mathcal{T}(G)$ the specific reduction tree whose leaves are encoded by the arrays $\Sol_G(F)$ (constructed in Definition \ref{specialreductiontree}). Recall that $\inseq(\mathcal{R})$ denotes the multiset of left-degree sequences of the leaves of $\mathcal{R}$, and $\LD(G)=\inseq(\mathcal{T}(G))$.
	 
	 \medskip
	 
	\begin{proof}[Proof of Theorem \ref{theoremA}]
		We proceed by induction on the maximal depth of a reduction tree of $G$. For the base case, the only reduction tree possible is the single leaf $G$. For the induction, perform a single reduction on $G$ using fixed edges $r_1=(i,j)$ and $r_2=(j,k)$ with $i<j<k$ to get graphs $G_1$, $G_2$, and $G_3$, with notation as in (\ref{reducing}). Note that we are selecting particular edges $r_1$ and $r_2$ even if there are multiple edges $(i,j)$ or $(j,k)$. Let $r_3$ denote the new edge $(i,k)$ in $G_m$ for each $m \in [3]$. Let $\mathcal{R}(G_m)$ be the reduction tree of $G_m$, $m \in [3]$,  induced from $\mathcal{R}$ by restriction to the node labeled by $G_m$ and all of its descendants. 
		
		By the induction assumption, $\inseq(\mathcal{R}(G_m))$ is exactly  $\LD(G_m)$, so 
		\[\inseq(\mathcal{R})=\bigcup_{m\in [3]}{\inseq(\mathcal{R}(G_m))}=\bigcup_{m\in [3]}{\LD(G_m)}. \]	
		Thus, we need to show that 
		\begin{align}
			\label{T(G)union}
			\LD(G)=\bigcup_{m\in [3]}\LD(G_m)
		\end{align}
		regardless of the choice of $r_1$ and $r_2$. However, if $\rho$ is the map that takes an array to its first column, then Theorem \ref{finalflows} yields the disjoint union decomposition
		\[\LD(G)=\bigcup_{F\subseteq E(G\backslash 0)}\rho\left(\SolG(F) \right).
		\]
		Similarly, for each $m\in[3]$,
		\[\LD(G_m)=\bigcup_{F\subseteq E(G_m\backslash 0)}\rho\left(\SolGm(F) \right)
		\]
		Thus, to prove (\ref{T(G)union}), it suffices to show 
		\begin{align}
			\label{identification}
			\bigcup_{F\subseteq E(G\backslash 0)}\rho\left(\SolG(F) \right)=\bigcup_{m\in [3]}{\bigcup_{F\subseteq E(G_m\backslash 0)}\rho\left(\SolGm(F) \right)}.
		\end{align}
		
		To show (\ref{identification}), to each $F\subseteq E(G\backslash 0)$, we associate a tuple $(F_m)_{m \in I(F, r_1, r_2)}$ with $I(F, r_1, r_2)\subseteq [3]$ and  $F_m\subseteq E(G_m\backslash 0)$, $m\in [3]$,  such that each subset of any $E(G_m\backslash 0)$ is in exactly one tuple and for each $F\subseteq E(G\backslash 0)$,
		\[\rho\left(\SolG(F) \right)=\bigcup_{m \in I(F, r_1, r_2)}{\rho\left(\SolGm(F_m) \right)}. \]
		
		By Theorem $\ref{finalflows}$, we verify the equivalent condition 
		\[\psi\left(\mathcal{F}_{\widetilde{G}\backslash\{s\}}\left(0,\bm{b}_G^F \right)\cap \mathbb{Z}^{\#E(\widetilde{G}\backslash\{s\})} \right)=
			\bigcup_{m \in I(F, r_1, r_2)}{\psi\left(\mathcal{F}_{\widetilde{G}_m\backslash\{s\}}\left(0,\bm{b}_{G_m}^F \right)\cap \mathbb{Z}^{\#E(\widetilde{G}_m\backslash\{s\})} \right)}. \]

		To make the notation more compact, let $H=\widetilde{G}\backslash\{s\}$ and $H_m=\widetilde{G_m}\backslash\{s\}$ for $m\in [3]$. 	We proceed in several cases depending on $F, r_1, r_2$. In each case, the argument is very similar to the proof of Proposition \ref{subdivisionlemma}.
		\medskip
		
\noindent {\it 	I. 	Suppose that $r_1$ is not incident to vertex $0$}. The following four cases deal with this scenario.
	\smallskip

		\noindent \emph{Case 1:} $r_1, r_2\notin F$: Associate to $F$ the tuple $(F_1,F_2)$ with 
		\[F_1=F \mbox{ and } F_2=F.\]
		Let $h$ be an integral flow on $H$ with netflow vector $(0,\bm{b}_G^F)$. For $m\in [3]$, we define integral flows on $H_m$ with netflow $(0,\bm{b}_{G_m}^F)$ having the same image under $\psi$.
		\begin{itemize}
			\item If $h(r_1)\geq h(r_2)$, define $h_1$ on $H_1$ with netflow $\bm{b}_{G_1}^{F_1}$ by 
			\begin{align*}
				h_1(e)=
				\begin{cases}
					h(r_2) &\mbox{ if } e=r_3,\\
					h(r_1)-h(r_2) &\mbox{ if } e=r_1, \\
					h(e) &\mbox{ otherwise.}
				\end{cases}
			\end{align*}
			
			\item If $h(r_1)< h(r_2)$, define $h_2$ on $H_2$ with netflow $\bm{b}_{G_2}^{F_2}$ by 
			\begin{align*}
				h_2(e)=
				\begin{cases}
					h(r_1) &\mbox{ if } e=r_3,\\
					h(r_2)-h(r_1)-1 &\mbox{ if } e=r_2, \\
					h(e) &\mbox{ otherwise.}
				\end{cases}
			\end{align*}
		\end{itemize}
		
		For the inverse map, given integral flows $h_m$ on $H_m$ with netflow $\bm{b}_{G_m}^{F_m}$ for $m\in[2]$, define flows $h^{(m)}$ on $H$ by
		\begin{align*}
			h^{(1)}(e)=
			\begin{cases}
				h_1(r_1)+h_1(r_3) &\mbox{ if } e=r_1,\\
				h_1(r_3) &\mbox{ if } e=r_2, \\
				h_1(e) &\mbox{ otherwise. }
			\end{cases}
			\mbox{ and \hspace{2ex}}
			h^{(2)}(e)=
			\begin{cases}
				h_2(r_3) &\mbox{ if } e=r_1,\\
				h_2(r_2)+h_2(r_3)+1 &\mbox{ if } e=r_2, \\
				h_2(e) &\mbox{ otherwise. }
			\end{cases}
		\end{align*}
		\medskip
		
		\noindent \emph{Case 2:} $r_1\in F,$ $r_2\notin F$: Associate to $F$ the tuple $(F_1,F_2)$ with
		\[F_1= F\backslash \{r_1\}\cup \{r_3\}\mbox{ and } F_2= F\backslash \{r_1\}\cup \{r_3\}. \]
		Use the same maps on flows given in Case 1.
		\medskip

		\noindent \emph{Case 3:} $r_1\notin F,$ $r_2\in F$: Associate to $F$ the tuple $(F_1,F_2,F_3)$ with 
		\[F_1= F\backslash \{r_2\}\cup \{r_1\},\mbox{ } F_2=F,\mbox{ and } F_3=F\backslash \{r_2\}. \] 
		Let $h$ be an integral flow on $H$ with netflow vector $(0,\bm{b}_G^{F})$. For $m\in [3]$, we define integral flows on $H_m$ with netflow $(0,\bm{b}_{G_m}^{F_m})$ having the same image under $\psi$.
		\begin{itemize}
			\item If $h(r_1)> h(r_2)$, define $h_1$ on $H_1$ with netflow $\bm{b}_{G_1}^{F_1}$ by 
			\begin{align*}
				h_1(e)=
				\begin{cases}
					h(r_2) &\mbox{ if } e=r_3,\\
					h(r_1)-h(r_2)-1 &\mbox{ if } e=r_1, \\
					h(e) &\mbox{ otherwise. }
				\end{cases}
			\end{align*}
			
			\item If $h(r_1)< h(r_2)$, define $h_2$ on $H_2$ with netflow $\bm{b}_{G_2}^{F_2}$ by 
			\begin{align*}
				h_2(e)=
				\begin{cases}
					h(r_1) &\mbox{ if } e=r_3,\\
					h(r_2)-h(r_1)-1 &\mbox{ if } e=r_2, \\
					h(e) &\mbox{ otherwise.}
				\end{cases}
			\end{align*}
			
			\item If $h(r_1)= h(r_2)$, define $h_3$ on $H_3$ with netflow $\bm{b}_{G_3}^{F_3}$ by 
			\begin{align*}
				h_3(e)=
				\begin{cases}
					h(r_1) &\mbox{ if } e=r_3,\\
					h(e) &\mbox{ otherwise.}
				\end{cases}
			\end{align*}
		\end{itemize}
		
		Given integral flows $h_m$ on $H_m$ with netflows $\bm{b}_{G_m}^{F_m}$ for $m\in[3]$, construct the inverse map by defining flows $h^{(m)}$ on $H$ for $m\in[3]$. Let $h^{(2)}$ be the same as in Case 1, and define 
		\begin{align*}
			h^{(1)}(e)=
			\begin{cases}
				h_1(r_1)+h_1(r_3)+1 &\mbox{ if } e=r_1,\\
				h_1(r_3) &\mbox{ if } e=r_2, \\
				h_1(e) &\mbox{ otherwise,}
			\end{cases}
			\mbox{ and \hspace{2ex}}
			h^{(3)}(e)=
			\begin{cases}
				h_3(r_3) &\mbox{ if } e=r_1,\\
				h_3(r_3) &\mbox{ if } e=r_2,\\
				h_3(e) &\mbox{ otherwise.}
			\end{cases}
		\end{align*}

		\medskip
		\noindent \emph{Case 4:} $r_1,r_2\in F$: Associate to $F$ the tuple $(F_1,F_2,F_3)$ with
		\[F_1=F\backslash \{r_2\}\cup \{r_3\},\mbox{ } F_2= F\backslash \{r_1\}\cup \{r_3\},\mbox{ and } F_3= F\backslash \{r_1,r_2\}\cup \{r_3\}. \]	
		Use the maps on flows given in Case 3.
		\medskip
		
		A straightforward check shows that every $F\subseteq E(G_m\backslash 0)$ for $m\in [3]$ is reached exactly once by {\it cases 1-4}.\\
		\smallskip

\noindent  {\it II.	Suppose that $r_1$ is incident to vertex $0$}. The following two cases deal with this scenario.
		 		\smallskip
				
	\noindent \emph{Case 1':} $r_2\notin F$: Associate to $F$ the tuple $(F_1,F_2)$ with
		\[F_1=F \mbox{ and } F_2= F. \]
		Use the maps on flows given in Case 1.
	
		\noindent \emph{Case 2':} $r_2\in F$: Associate to $F$ the tuple $(F_2,F_3)$ with 
		\[F_2=F \mbox{ and } F_3=F\backslash \{r_2\}. \]
		Use the maps on flows for $H_2$ and $H_3$ given in Case 3. 
		\smallskip
		
		A straightforward check shows that every $F\subseteq E(G_m\backslash 0)$ for $m\in [3]$ is reached exactly once by {\it cases 1'-2'}.
	\end{proof}

\section*{Acknowledgments} 
We thank Bal\'azs Elek, Alex Fink, and Allen Knutson for inspiring conversations. We are grateful to the anonymous referees for their helpful and detailed feedback, which improved the exposition of the paper.

\bibliography{FlowPolytopesBibliography} 
\bibliographystyle{plain}

\end{document}